\title[Scattering for 1DNLS]{Scattering for the defocusing NLS 
on the line with variable coefficients}
\def\PpP{-}
\newcommand{\weak}{\rightharpoonup}
\newcommand{\bra}[1]{\langle #1 \rangle}
\newcommand{\one}[1]{\mathbf{1}_{#1}}
\newenvironment{sproof}%
  {\baselineskip0pt\normalfont\footnotesize%
  \abovedisplayskip=1pt\abovedisplayshortskip=1pt%
  \belowdisplayskip=1pt\belowdisplayshortskip=1pt%
  \vskip2pt\par\noindent$\llbracket$\ignorespaces}%
  {\ignorespaces$\rrbracket$\vskip2pt}
\def\⟦{\begin{sproof}\ignorespaces}\def\⟧{\unskip\end{sproof}}
\numberwithin{equation}{section}
\newtheorem{theorem}{Theorem}[section]
\newtheorem{corollary}[theorem]{Corollary}
\newtheorem{lemma}[theorem]{Lemma}
\newtheorem{proposition}[theorem]{Proposition}
\theoremstyle{remark}
\newtheorem{remark}[theorem]{Remark}
\theoremstyle{definition}
\newtheorem{definition}[theorem]{Definition}
\newtheorem*{ack}{Acknowledgments}
\date{\today}
\author[P.~D'Ancona]{Piero D'Ancona}
\address{Piero D'Ancona: 
Dipartimento di Matematica\\
Sapienza Universit\`{a} di Roma\\
Piazzale A.~Moro 2\\
00185 Roma\\
Italy}
\email{dancona@mat.uniroma1.it}
\author[A.~Zanni]{Angelo Zanni}
\address{Angelo Zanni: 
Dipartimento di Matematica\\
Sapienza Universit\`{a} di Roma\\
Piazzale A.~Moro 2\\
00185 Roma\\
Italy}
\email{angelo.zanni@uniroma1.it}
\subjclass[2010]{%
Primary: 35Q55,
Secondary: 35B40,
35P25}
\keywords{%
Scattering,
nonlinear Schr\"{o}dinger equation,
variable coefficients%
}
\begin{document}

\begin{abstract}
  We prove $H^{1}$ scattering for a defocusing NLS on the line
  with fully variable coefficients. The result is proved by
  adapting the Kenig--Merle scheme
  to a non translation invariant
  setting. In addition, we give an abstract version of the scheme
  which can be applied to other operators.
\end{abstract}

\maketitle

\section{Introduction}\label{sec:1}

We consider a Schr\"{o}dinger equation on the line with
a defocusing nonlinearity
\begin{equation}\label{eq:1DNLS}
  iu_{t}-Au=|u|^{\beta-1}u,
  \qquad
  \beta>5,
\end{equation}
where $A$ is the operator with fully variable coefficients
\begin{equation}\label{eq:definA}
  Av=-\partial_{b}(a(x)\partial_{b}v)+c(x)v,
  \qquad
  \partial_{b}=\partial_{x}-ib(x).
\end{equation}
We assume that $a(x),b(x),c(x):\mathbb{R}\to \mathbb{R}$ 
are bounded functions such that
\begin{equation}\label{eq:assabc}
  \lim_{|x|\to +\infty}a(x)=1,\quad
  a(x)\ge a_{0},\quad
  c(x)\ge0,\quad
  \bra{x}^{2}(c+a_{x}^{2}+|a_{xx}|)\in L^{1}
\end{equation}
for a strictly positive constant $a_{0}$.
Under these assumptions, the linear flow $e^{-itA}$ satisfies
the usual Strichartz estimates, and global well posedness
for arbitrary $H^{1}$ data follows
(see Section \ref{sec:3}).
We recall the standard definition of scattering in the
energy space:

\begin{definition}\label{def:scattering}
  Let $u(t,x)\in C(\mathbb{R};H^{1}(\mathbb{R}))$ be a
  solution of equation \eqref{eq:definA}. We say that
  $u$ \emph{scatters} if 
  $\exists\phi_{+},\phi_{-}\in H^{1}(\mathbb{R})$
  such that $\|u(t)-e^{-itA}\phi_{\pm}\|_{H^{1}}\to0$ as
  $\pm t\to+\infty$.
\end{definition}

The first main result of the paper is that, if \eqref{eq:1DNLS}
has at least one non--scattering solution, then it has also
a nontrivial global solution whose energy
remains essentially concentrated in a bounded region of space 
for all times. More precisely we prove:

\begin{theorem}[Critical solution]\label{the:critA}
  Assume $A$ satisfies \eqref{eq:assabc}.
  If equation \eqref{eq:1DNLS} has a solution that does not 
  scatter, then it has also a global solution
  $u(t,x)\in C(\mathbb{R};H^{1}(\mathbb{R}))$ such that
  the set $\{u(t,\cdot):t\in \mathbb{R}\}$ is relatively
  compact in $H^{1}$.
\end{theorem}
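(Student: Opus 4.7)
The plan is to follow the Kenig--Merle concentration-compactness scheme, with a linear profile decomposition carefully adapted to the lack of translation invariance of $A$. Let $E$ denote the conserved energy associated with \eqref{eq:1DNLS}. Using the Strichartz estimates and the small-data scattering theory of Section~\ref{sec:3}, I would set
\[
E_{c}:=\sup\bigl\{E_{0}\ge 0:\text{every }H^{1}\text{ solution with }E(u(0))\le E_{0}\text{ scatters}\bigr\},
\]
so that $E_{c}>0$ by small-data scattering, while $E_{c}<+\infty$ by the hypothesis that some non-scattering solution exists. Then I would choose a sequence of $H^{1}$ solutions $u_{n}$ that do not scatter and with $E(u_{n})\downarrow E_{c}$, so that at least one global Strichartz norm of $u_{n}$ diverges in forward or backward time.

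The heart of the argument would be a linear profile decomposition for $e^{-itA}$ applied to the sequence $u_{n}(0)$. Because $A$ is not translation invariant, the profiles split into two types. \emph{Trapped} profiles take the form $e^{-it_{n}^{j}A}\phi^{j}$, with time shifts $t_{n}^{j}$ converging in $[-\infty,+\infty]$ and no spatial translation, and are evolved under $e^{-itA}$. \emph{Escaping} profiles are obtained by first translating to spatial centers $x_{n}^{j}$ with $|x_{n}^{j}|\to\infty$ and then evolving by the free Schr\"odinger group: assumption~\eqref{eq:assabc}, via $a(x)\to1$ and the weighted $L^{1}$ decay of $c,a_{x}^{2},a_{xx}$, ensures that on bounded neighborhoods of $x_{n}^{j}$ the operator $A$ is asymptotically the constant-coefficient Laplacian $-\partial_{x}^{2}$, so that $e^{-itA}$ can be replaced by $e^{it\partial_{x}^{2}}$ up to an error vanishing in the Strichartz norm. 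The remainder $w_{n}^{J}$ in the decomposition must have Strichartz norm tending to $0$ as $J,n\to\infty$.

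Combining the decomposition with the construction of nonlinear profiles and a long-time perturbation/stability lemma, I would deduce a Pythagorean-type identity for the energies and then argue as usual: if two or more profiles were nontrivial, each would carry energy strictly less than $E_{c}$ (by nonnegativity of every term in the defocusing energy), hence each nonlinear profile would scatter--trapped ones by the definition of $E_{c}$, escaping ones by the known scattering theory for the constant-coefficient defocusing NLS at $\beta>5$--and by stability $u_{n}$ itself would scatter for large $n$, a contradiction. Thus exactly one profile survives, it must be trapped, and passing to the limit yields a solution $u_{c}$ of \eqref{eq:1DNLS} of energy $E_{c}$ that fails to scatter in both time directions. Applying the same profile analysis to any sequence $u_{c}(t_{n})$ and using that $u_{c}$ is critical again, one concludes that $\{u_{c}(t):t\in\mathbb{R}\}$ is relatively compact in $H^{1}$.

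The principal obstacle is the profile decomposition itself: in the variable-coefficient setting one must prove that escaping profiles can be transferred to the free flow uniformly in $n$, with quantitative control of the discrepancy $e^{-itA}-e^{it\partial_{x}^{2}}$ applied to data concentrated near a point $x_{n}^{j}$ with $|x_{n}^{j}|\to\infty$. A secondary difficulty is establishing a long-time perturbation lemma for \eqref{eq:1DNLS} strong enough to superpose approximate solutions built out of trapped profiles (evolved under $e^{-itA}$) and escaping profiles (evolved under $e^{it\partial_{x}^{2}}$), so that the nonlinear reconstruction of $u_{n}$ is controlled uniformly in $n$ and $J$.
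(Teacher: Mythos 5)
Your proposal follows essentially the same route as the paper: linear profile decomposition adapted to the loss of translation invariance (the paper formalizes this via abstract assumptions \textbf{A1--A6} and \textbf{B1--B3}); a case distinction between profiles concentrating near a fixed spatial location, which are evolved under $e^{-itA}$, and profiles escaping to spatial infinity, which are transferred to the constant-coefficient flow (this is exactly assumption \textbf{B3} and Cases 3--4 in the proof of Proposition~\ref{pro:reduction}); a long-time perturbation lemma (Proposition~\ref{pro:NLpert}); a Pythagorean expansion to force a unique surviving profile; and reapplying the profile analysis to $u_c(t_n)$ for the compactness of the orbit (Proposition~\ref{pro:compact}). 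Your identification of the two principal obstacles--transferring escaping profiles to the free flow uniformly in $n$, and building a strong enough stability lemma--matches exactly the roles of \textbf{B3} and Proposition~\ref{pro:NLpert}.

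There is, however, one genuine gap in your setup. You define the critical threshold $E_c$ using only the Hamiltonian energy $E$, not including the conserved mass. Since $\beta>5$ is energy-subcritical and mass-supercritical in one dimension, $E(\phi)$ alone does not control $\|\phi\|_{H^1}$: one can have $E(\phi)$ arbitrarily small while $\|\phi\|_{L^2}$ is arbitrarily large. Consequently your assertion ``$E_c>0$ by small-data scattering'' does not follow as stated, because small $E$ does not mean small data. The paper resolves this precisely by introducing the \emph{full} energy
\begin{equation*}
\mathcal{E}(\phi)=E(\phi)+c_0\|\phi\|_{L^2}^2,
\end{equation*}
which by conservation of both mass and energy satisfies $\mathcal{E}(\phi)\simeq\|u(t)\|_{H^1}^2$ for all $t$, and then defines $\mathcal{E}_{\mathrm{crit}}$ with respect to $\mathcal{E}$. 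Your argument would work verbatim after this modification (or alternatively by running a two-parameter minimization over mass--energy pairs); without it, the opening step that $E_c>0$ is unjustified.
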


The solution constructed in Theorem \ref{the:critA} is called
a \emph{critical solution}; one may think of it as
an `approximate standing wave' for \eqref{eq:1DNLS}.
The proof of Theorem \ref{the:critA} is based on the
concentration--compactness scheme introduced by
\cite{Lions84-a}, \cite{Lions84-b} and adapted
to evolution equations in
\cite{BahouriGerard97-a}, 
\cite{Keraani01-a}, 
\cite{KenigMerle06b}.
Note that the equation is not translation
invariant, and this is one of the main difficulties in adapting 
the scheme to the present setting.
Actually, we give an abstract version of the scheme,
which can be applied to other equations as well
(see Sections \ref{sec:2}--\ref{sec:4}).

The existence of a critical solution is in contrast with
the dispersive properties of the equation, thus it
is natural to expect that such a solution should not exist.
In order to prove this, we need stronger assumptions:
we assume that for some $\delta>0$
\begin{equation}\label{eq:assabcbis}
  \textstyle
  -(1-\delta)a\le x a_{x}\le (\frac 65-\delta)a,
  \qquad
  xc_{x}\le 0.
\end{equation}
Under these additional conditions, we can deduce a
virial estimate for the equation. As a consequence, we obtain
the second main result of the paper:

\begin{theorem}[Scattering]\label{the:one}
  Assume $A$ satisfies \eqref{eq:assabc}, 
  \eqref{eq:assabcbis}. Then all global solutions
  $u(t,x)\in C(\mathbb{R};H^{1}(\mathbb{R}))$ of \eqref{eq:1DNLS}
  scatter.
\end{theorem}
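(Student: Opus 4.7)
The strategy is the Kenig--Merle rigidity step. Assuming some $H^{1}$ solution of \eqref{eq:1DNLS} fails to scatter, Theorem \ref{the:critA} produces a nontrivial global critical solution $u_{c}\in C(\mathbb{R};H^{1}(\mathbb{R}))$ whose orbit $K=\{u_{c}(t,\cdot):t\in\mathbb{R}\}$ is relatively compact in $H^{1}(\mathbb{R})$. The added hypothesis \eqref{eq:assabcbis} is engineered precisely so that a localized virial identity forces $u_{c}\equiv 0$, contradicting the non-scattering assumption.

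Concretely, I would fix a smooth weight $\phi_{R}(x)=R^{2}\phi(x/R)$, with $\phi(s)=s^{2}$ on $|s|\le 1$ and $\phi$ constant for $|s|\ge 2$, and study the truncated virial
\[
V_{R}(t)=\int\phi_{R}(x)\,|u_{c}(t,x)|^{2}\,dx.
\]
Two differentiations in $t$ together with \eqref{eq:1DNLS} and integration by parts (using $\partial_{b}=\partial_{x}-ib$) produce a Morawetz-type identity of the schematic form
\[
V_{R}''(t)=\int P_{R}(x)\,a|\partial_{b}u_{c}|^{2}\,dx+\int Q_{R}(x)\,|u_{c}|^{\beta+1}\,dx+\int S_{R}(x)\,|u_{c}|^{2}\,dx+\mathcal{E}_{R}(t),
\]
where on $|x|\le R$ the coefficients reduce to explicit combinations of $a$, $xa_{x}$, $xc_{x}$ and $\beta$, while $\mathcal{E}_{R}(t)$ collects boundary contributions supported where $|x|\sim R$ together with tails controlled by $\bra{x}^{2}(c+a_{x}^{2}+|a_{xx}|)\in L^{1}$; by \eqref{eq:assabc} and the relative compactness of $K$, $\mathcal{E}_{R}(t)\to 0$ uniformly in $t$ as $R\to\infty$. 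The role of \eqref{eq:assabcbis} is exactly that, on $|x|\le R$, $P_{R}$ and $Q_{R}$ are strictly positive and $S_{R}\ge 0$: the lower bound $xa_{x}\ge-(1-\delta)a$ keeps the kinetic coefficient positive, the upper bound $xa_{x}\le(\tfrac{6}{5}-\delta)a$ combines with the gain $\tfrac{4(\beta-1)}{\beta+1}>\tfrac{16}{3}$ (equivalent to $\beta>5$) to keep the nonlinear coefficient positive, and $xc_{x}\le 0$ gives the right sign on the potential term. This yields a uniform coercive bound
\[
V_{R}''(t)\ge c\!\int_{|x|\le R}\!\bigl(a|\partial_{b}u_{c}|^{2}+|u_{c}|^{\beta+1}\bigr)\,dx-o_{R}(1).
\]

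The argument is then closed by a double use of precompactness. Tightness of $K$ combined with the non-triviality of $u_{c}$ gives $R_{0}$ and $\eta>0$ such that the integral above is bounded below by $\eta$ for every $t$, so choosing $R\ge R_{0}$ large enough yields $V_{R}''(t)\ge\eta/2$ uniformly in $t$. On the other hand the $H^{1}$-boundedness of $K$ implies $|V_{R}(t)|\le CR^{2}$ and $|V_{R}'(t)|\le CR$ uniformly in $t$. Integrating $V_{R}''\ge\eta/2$ twice from $0$ to $T$ gives $V_{R}(T)\ge(\eta/4)T^{2}-O(RT+R^{2})$, which is incompatible with the $O(R^{2})$ upper bound once $T$ is large. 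The resulting contradiction forces $u_{c}\equiv 0$, proving that every $H^{1}$ solution scatters.

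The main technical obstacle I expect is the virial computation itself and the sign analysis: the cross terms generated by the non-flat magnetic drift $b(x)$, by the variable $a(x)$, and by the truncation $\phi_{R}$ must combine exactly into the scalar quantities $xa_{x}$ and $xc_{x}$ that appear in \eqref{eq:assabcbis}, with the critical numerical constants ($1-\delta$ for coercivity of the kinetic part, and $\tfrac{6}{5}-\delta$ matching the exponent $\beta>5$ through the coefficient $\tfrac{4(\beta-1)}{\beta+1}$) appearing on the nose. Once the identity is correctly set up and the signs verified, the rigidity step from precompactness to contradiction is fairly mechanical.
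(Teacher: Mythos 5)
Your approach is essentially the same as the paper's: localize a virial quantity with a cutoff at scale $R$, show that the assumptions \eqref{eq:assabcbis} make the bulk term sign-definite and coercive, use the relative compactness of the critical orbit (Corollary \ref{cor:compactcrit}) to make the tail and boundary contributions a small fraction of the bulk, and derive a contradiction by integrating the virial inequality in time. Two small remarks on the implementation. First, the paper gauge-transforms $b$ away at the outset (via $w=e^{-iB(x)}u$), which is slightly cleaner than carrying $\partial_{b}$ through the computation, and parametrizes the weight via $\gamma=am'=(x-\tfrac{x^{3}}{6R^{2}})\chi(x/R)$ rather than via $\phi_{R}$; it then integrates $\theta''$ only once, using the uniform-in-time bound $|\theta'(t)|\lesssim_{R}\|u\|_{H^{1}}^{2}$, whereas you integrate twice (a Glassey-type argument). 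Both close the contradiction; yours costs nothing extra.

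Second, one heuristic remark in your proposal is numerically off: $\tfrac{4(\beta-1)}{\beta+1}>\tfrac{16}{3}$ is equivalent to $\beta<-7$, not $\beta>5$ (the quantity is increasing in $\beta$ with limit $4$, so it never exceeds $\tfrac{16}{3}$). In fact, in the paper's identity the coefficient of $|u|^{\beta+1}$ is $(\beta-1)\gamma'$, which on $|x|\le R$ equals $(\beta-1)(1-\tfrac{x^{2}}{2R^{2}})\ge\tfrac{\beta-1}{2}>0$ for any $\beta>1$; its positivity does not interact with the upper bound on $xa_{x}$. The restriction $\beta>5$ in the theorem comes from the well-posedness/perturbation machinery (it guarantees $\beta+1>p$ in the H\"older estimates such as \eqref{eq:holder}), not from the sign analysis in the virial step. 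The condition $xa_{x}\le(\tfrac{6}{5}-\delta)a$ is used purely to keep the kinetic coefficient $I=2(2a\gamma'-a'\gamma)$ positive on $|x|\le R$, and $xc_{x}\le0$ keeps the mass coefficient nonnegative. This is a misattribution in your commentary rather than a gap in the proof outline itself, but it is worth correcting before you attempt the computation.
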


There exists a vast literature on scattering for NLS with
constant coefficients.
Classical results are summarized in \cite{Cazenave03-a};
in particular, the low dimensional 1D and 2D cases are harder 
and were solved in \cite{Nakanishi99-a} using smoothing
estimates with space--time weights. 
A different approach, based on interaction Morawetz estimates,
was followed in 
\cite{CollianderHolmerVisan08-a},
\cite{CollianderGrillakisTzirakis09-a}.
In \cite{KenigMerle06b} a new scheme was
introduced, based on a concentration--compactness plus
rigidity argument. The method is more abstract and flexible,
works for both focusing and defocusing equations,
and was further developed in 
\cite{DuyckaertsHolmerRoudenko08},
\cite{FangXieCazenave11-a}.

If the NLS has variable coefficients,
translational invariance breaks down, requiring
a modification of the scheme.
This was attempted
for a Schr\"{o}dinger equations with potentials in
\cite{Hong16-a},
\cite{BanicaVisciglia16-a},
\cite{Lafontaine16-a}.
Here we go a step further, proving
an abstract version of the method in order to single out the
properties of the flow which are essential for the method
to work
(see assumptions \textbf{A1--A6} in Section \ref{sec:2}
and \textbf{B1--B3} in
Sections \ref{sec:3} and \ref{sec:4}).
In particular, we recover earlier scattering
results on the line. To get the required dispersive 
propertis for the flow, we use
\cite{DAnconaFanelli06-a}.

A feature of our paper, which might be of interest to
specialists, is that only admissible Strichartz estimates
are used,
in contrast with the standard approach which involves
Strichartz estimates at non admissible points.

The plan of the paper is the following.
In Section \ref{sec:2} we prove a profile decomposition result
for a general linear flow $e^{-itA}$.
Section \ref{sec:3} builds the standard global well posedness 
theory for an abstract equation like \eqref{eq:1DNLS},
under a priori assumptions on the operator $A$; 
this includes a
scattering criterium for global solutions, and a nonlinear
perturbation lemma.
Section \ref{sec:4} is devoted to the construction and compactness
of the critical solution.
We specialize our abstract result to a Schr\"{o}dinger operator
with variable coefficients in Section \ref{sec:5}, 
completing the proof of Theorem \ref{the:critA}.
In the last Section \ref{sec:6} we prove a virial inequality 
for \eqref{eq:1DNLS} from which Theorem \ref{the:one} follows.

\begin{ack}
  The Authors are supported by the MIUR PRIN project 2020XB3EFL
 ``Hamiltonian and Dispersive PDEs''
\end{ack}

\section{Profile decomposition}\label{sec:2}

In Sections \ref{sec:2} to \ref{sec:4},
$A$ will be an arbitrary non negative selfadjoint operator on
$L^{2}(\mathbb{R})$ satisfying suitable sets of abstract
assumptions. For $z\in \mathbb{R}$, we write
\begin{equation}\label{eq:translA}
  A_{z}=\tau_{-z}A \tau_{z}.
\end{equation}
For instance, if $Au=-\Delta u+c(x)u$, then
\begin{equation*}
  A_{z}u(x)=-\Delta u(x)+c(x-z)u(x).
\end{equation*}
Fixed an index 
\begin{equation*}
  r\in(2,\infty),
\end{equation*}
we assume that for all $\psi\in H^{1}$ and all
$(x_{k})_{k\ge1},(s_{k})_{k\ge1}\subset \mathbb{R}$ 
the following holds:
\begin{description}
  \item[A1]
  $A$ is selfadjoint on $L^{2}(\mathbb{R})$ with
  $(Av,v)\ge0$ for all $v\in D(A)$,
  $A$ extends to a bounded operator $A:H^{1}\to H^{-1}$, 
  and for some $c_{0}>0$
  \begin{equation*}
    (A_{z}v,v)_{L^{2}}+c_{0}\|v\|_{L^{2}}^{2}
    \simeq\|v\|_{H^{1}}^{2}
    \quad\text{uniformly in}\  z\in \mathbb{R}.
  \end{equation*}
  \item[A2]
  $\|e^{-itA}\psi\|_{H^{1}}\lesssim\|\psi\|_{H^{1}}$
  uniformly in $t\in \mathbb{R}$.
  \item[A3] 
  The sequence $(A_{x_{k}}\psi)_{k\ge1}$ is
  precompact in $H^{-1}(\mathbb{R})$.
  \item[A4]
  If $s_{k}\to \overline{s}\in \mathbb{R}$,
  the sequence $(e^{-is_{k}A_{x_{k}}}\psi)_{k\ge1}$ is precompact
  in $H^{1}$.
  \item[A5]
  The sequence
  $(e^{-is_{k}A_{x_{k}}}\psi)_{k\ge1}$ is precompact
  in $L^{r}$.
  \item[A6]
  If $|s_{k}|\to+\infty$ then $e^{i s_{k}A_{x_{k}}}\psi\weak 0$
  in $H^{1}$ up to a subsequence.
\end{description}

In view of \textbf{A1}, we shall write
\begin{equation*}
 \|v\|_{z}^{2}:=(A_{z}v,v)_{L^{2}} =(A \tau_{z}v,\tau_{z}v).
\end{equation*}
We briefly discuss these assumptions.
\textbf{A1}--\textbf{A4} are quite natural for second order 
elliptic operators;
essentially, conditions on $A_{x_{k}}$ are implied by the
continuity of the coefficients (if $x_{k}$ is bounded) or they
are equivalent to conditions on the asymptotic behaviour 
of the coefficients at spatial infinity (if $x_{k}$ is 
unbounded).
On the other hand, \textbf{A5} and \textbf{A6} 
are more restrictive, and
embed the dispersive behaviour of the flow $e^{-itA}$
for large times.

We introduce a definition.

\begin{definition}\label{def:standseq}
  We say that $(x_{n})_{n\ge1}\subset \mathbb{R}$ is a
  \emph{standard sequence} if 
  either $x_{n}\to+\infty$, or $x_{n}\to-\infty$,
  or $x_{n}=0$ for all $n$.
\end{definition}

\begin{lemma}\label{lem:linfbound}
  Let $(v_{n})$ be a bounded sequence in $H^{1}(\mathbb{R})$.
  Then we can find a standard sequence $(x_{n})\subset \mathbb{R}$
  and $\psi\in H^{1}$ such that, up to the extraction of a
  subsequence,
  \begin{enumerate}[label=(\roman*)]
    \item $\tau_{-x_{n}}v_{n}\weak \psi$ in $H^{1}$
    \item 
    $\limsup\|v_{n}\|_{L^{\infty}}\le 
    2\|\psi\|_{L^{2}}^{1/2} \cdot \sup\|v_{n}\|_{H^{1}}^{1/2}$.
  \end{enumerate}
\end{lemma}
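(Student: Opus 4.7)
The proof will hinge on the one-dimensional Gagliardo--Nirenberg (Agmon) inequality
\[
  \|\varphi\|_{L^\infty}^2 \le 2\|\varphi\|_{L^2}\|\varphi_x\|_{L^2},
  \qquad \varphi\in H^1(\mathbb R),
\]
applied to the weak $H^1$-limit of a suitably translated subsequence of $(v_n)$. The plan is to center each $v_n$ near a point that nearly realizes its $L^\infty$ norm, extract a weak $H^1$ limit, and use the one-dimensional embedding $H^{1}_{\mathrm{loc}}\hookrightarrow\hookrightarrow C_{\mathrm{loc}}$ to transfer the value $|v_n(y_n)|$ to the limit.

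First, using $H^1(\mathbb R)\hookrightarrow C_0(\mathbb R)$ I pick, for each $n$, a point $y_n\in\mathbb R$ with $|v_n(y_n)|^2\ge\|v_n\|_{L^\infty}^2-1/n$, and I extract a subsequence along which $\|v_n\|_{L^\infty}^2$ converges to $\limsup_n\|v_n\|_{L^\infty}^2$. After a further extraction, exactly one of the alternatives $y_n\to+\infty$, $y_n\to-\infty$, or $y_n\to y_*\in\mathbb R$ holds. In the first two cases I set $x_n:=y_n$ (with a sign adjustment, if needed, so that the translation convention of Section \ref{sec:2} gives $(\tau_{-x_n}v_n)(0)=v_n(y_n)$); in the third I set $x_n:=0$ for all $n$. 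In each case $(x_n)$ is a standard sequence in the sense of Definition \ref{def:standseq}.

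Let $w_n:=\tau_{-x_n}v_n$. Since $\tau_{-x_n}$ is an $H^1$-isometry, $(w_n)$ is bounded in $H^1$, and along a subsequence $w_n\weak\psi$ in $H^1$, giving (i). Rellich--Kondrachov together with $H^1(I)\hookrightarrow C(\overline I)$ on bounded intervals $I$ yields $w_n\to\psi$ uniformly on compact sets. In the two unbounded cases $|w_n(0)|=|v_n(y_n)|\to|\psi(0)|$; in the bounded case uniform convergence on a compact neighborhood of $y_*$ combined with $y_n\to y_*$ gives $|v_n(y_n)|\to|\psi(y_*)|$. Denoting by $\xi_*$ the relevant point (either $0$ or $y_*$), Agmon's inequality for $\psi$ together with the weak lower semicontinuity of $\|\cdot\|_{L^2}$ and $\|\partial_x\cdot\|_{L^2}$ leads to
\[
  \limsup_n\|v_n\|_{L^\infty}^2
  =\lim_n|v_n(y_n)|^2
  =|\psi(\xi_*)|^2
  \le 2\|\psi\|_{L^2}\|\psi_x\|_{L^2}
  \le 2\|\psi\|_{L^2}\,\sup_n\|v_n\|_{H^1},
\]
which is (ii) (with the sharper constant $\sqrt 2<2$).

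The only mildly delicate point is that Definition \ref{def:standseq} forces $(x_n)$ to be identically zero rather than merely convergent; this is precisely why the bounded case is treated separately, with the information on the location of the approximate maximum being absorbed into the weak limit $\psi$ (through the value $|\psi(y_*)|$) rather than into the translation parameters.
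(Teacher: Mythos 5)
Your argument is correct and takes a genuinely different route from the paper's. You locate approximate maximizers $y_n$ of $|v_n|$ directly, recenter there (with the sign fix you note, or not at all in the bounded case), pass to a weak $H^1$ limit $\psi$, upgrade to local uniform convergence via the compact embedding $H^1(I)\hookrightarrow\hookrightarrow C(\overline{I})$ on bounded intervals, and finally invoke the one-dimensional Agmon/Gagliardo--Nirenberg inequality for $\psi$. The paper instead performs a Fourier frequency cut-off $v_n=P_{\le R}v_n+P_{>R}v_n$, controls the high-frequency tail by $\sqrt{2/R}\,\|v_n\|_{H^1}$, writes the sup of the low-frequency part as a pairing with the kernel $\sin(Rx)/(\pi x)$, passes to the weak limit in that pairing, and then optimizes over $R$; the optimizer yields the constant $2/\sqrt{\pi}\approx 1.13$, while your form of Agmon gives $\sqrt{2}$ (the sharp inequality $\|\psi\|_{L^\infty}^2\le\|\psi\|_{L^2}\|\psi_x\|_{L^2}$ would give $1$), both well within the claimed constant $2$. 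What each buys: your route is more elementary and requires no Fourier cut-off, at the price of invoking Rellich compactness into $C_{loc}$ to track the value of $v_n$ at $y_n$; the paper's route is purely duality-based and only needs weak $L^2$ convergence against an explicit kernel, never pointwise or locally uniform convergence, which is a structurally lighter requirement and is the mechanism that carries over most directly when the actual pointwise maximum is less convenient to track. Both are valid proofs of the lemma as stated.
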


\begin{proof}
  Define for $R>0$ the projections 
  $P_{\le R}v=\mathcal{F}^{-1}(\one{[-R,R]}(\xi)\widehat{v}(\xi))$,
  $P_{>R}=I-P_{\le R}$.
  Recall that
  \begin{equation*}
    \textstyle
    \mathcal{F}^{-1}(\one{[-R,R]})=\frac{\sin(Rx)}{\pi x},
    \qquad
    \|\frac{\sin(Rx)}{\pi x}\|_{L^{2}}=\frac1\pi\sqrt{\frac R2}.
  \end{equation*}
  We have the easy estimate
  \begin{equation*}
    \textstyle
    \|P_{>R}v\|_{L^{\infty}}\le
    \int_{|\xi|>R}|\xi|^{-1}|\xi \widehat{v}|d\xi\le
    \sqrt{\frac 2R}\|v\|_{H^{1}}.
  \end{equation*}
  Next, pick any sequence of reals $c_{n}\downarrow1$,
  $c_{n}>1$, and let $x_{n}\in \mathbb{R}$ be such that
  $\|P_{\le R}v_{n}\|_{L^{\infty}}
    \le c_{n}|P_{\le R}v_{n}(x_{n})|$; 
  then we can write
  \begin{equation*}
    \textstyle
    \|P_{\le R}v_{n}\|_{L^{\infty}}
        \le c_{n}|P_{\le R}v_{n}(x_{n})|=
        c_{n}|\int 
        \frac{\sin(Ry)}{\pi y}v_{n}(x_{n}-y)dy|=
    c_{n}|(\frac{\sin(Rx)}{\pi x},\tau_{x_{n}}v_{n})|.
  \end{equation*}
  Combining the two estimates we get
  \begin{equation*}
    \textstyle
    \|v_{n}\|_{L^{\infty}}\le
    \sqrt{\frac 2R}\|v_{n}\|_{H^{1}}+
    c_{n}|(\frac{\sin(Rx)}{\pi x},\tau_{x_{n}}v_{n})|.
  \end{equation*}
  Passing to a subsequence we can assume that
  $\tau_{-x_{n}}v_{n}\weak \psi\in H^{1}$ and that
  $\|v_{n}\|_{L^{\infty}}\to\limsup\|v_{n}\|_{L^{\infty}}$,
  so that
  \begin{equation*}
    \textstyle
    \limsup\|v_{n}\|_{L^{\infty}}\le
    \sqrt{\frac 2R}\sup\|v_{n}\|_{H^{1}}+
    |(\frac{\sin(Rx)}{\pi x},\psi)|\le
    \sqrt{\frac 2R}\sup\|v_{n}\|_{H^{1}}+
    \frac 1\pi \sqrt{\frac R2}\|\psi\|_{L^{2}}
  \end{equation*}
  and optimizing in $R>0$
  \begin{equation*}
    \textstyle
    \le \frac{2}{\sqrt{\pi}}
    \sup\|v_{n}\|_{H^{1}}^{1/2}\|\psi\|_{L^{2}}^{1/2}.
  \end{equation*}
  Finally, by further passing to a subsequence,
  we can assume that either $x_{n}\to\pm \infty$ or 
  $x_{n}\to \overline{x}\in \mathbb{R}$.
  In the last case, we see that
  $v_{n}\weak\tau_{\overline{x}}\psi$.
  Since $\|\tau_{\overline{x}}\psi\|_{L^{2}}=\|\psi\|_{L^{2}}$,
  we can replace $\psi$ by
  $\tau_{\overline{x}}\psi$ and take $x_{n}=0$ for all $n$.
\end{proof}

\begin{proposition}[First profile]\label{pro:profiles}
  Assume the operator $A$ satisfies \textbf{A1}--\textbf{A5} 
  and suppose that a
  bounded sequence in $H^{1}(\mathbb{R})$ is given.
  Then we can extract a subsequence $(v_{n})$, 
  and we can find a function $\psi\in H^{1}$
  and standard sequences $(x_{n})$,
  $(t_{n})\subset \mathbb{R}$, such that
  \begin{enumerate}
    \item 
    $\tau_{-x_{n}}e^{-it_{n}A}v_{n}=\psi+W_{n}$
    with $W_{n}\weak 0$ in $H^{1}$
    \item 
    $\limsup\|e^{-itA}v_{n}\|_{L^{\infty}L^{\infty}}\lesssim
    \|\psi\|_{L^{2}}^{1/2}\sup\|v_{n}\|_{H^{1}}^{1/2}$
    \item 
    for $n\to+\infty$ we have
    \begin{itemize}[label=$\star$]
      \item 
      $\|v_{n}\|_{L^{2}}^{2}=\|\psi\|_{L^{2}}^{2}+
      \|W_{n}\|_{L^{2}}^{2}+o(1)$
      \item 
      $(Av_{n},v_{n})_{L^{2}}=
        \|\psi\|_{x_{n}}^{2}+
        \|W_{n}\|_{x_{n}}^{2}+o(1)$
      \item 
      $\|v_{n}\|_{L^{r}}^{r}=
        \|e^{it_{n}A_{x_{n}}}\psi\|_{L^{r}}^{r}+
        \|e^{it_{n}A_{x_{n}}}W_{n}\|_{L^{r}}^{r}+o(1)$
    \end{itemize}
  \item
  $\|\psi\|_{H^{1}}+\|W_{n}\|_{H^{1}}
    \lesssim\sup_{n}\|v_{n}\|_{H^{1}}$.
  \end{enumerate}
\end{proposition}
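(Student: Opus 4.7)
The plan is to apply Lemma \ref{lem:linfbound} at a nearly-optimal time, then convert the resulting time sequence into a standard one by a bounded/unbounded dichotomy, and finally check the three orthogonality identities separately.

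First, I pick $t_n \in \mathbb{R}$ with $\|e^{-it_n A} v_n\|_{L^\infty} \ge \tfrac12 \sup_t \|e^{-itA}v_n\|_{L^\infty}$ (the case where this sup tends to zero being trivial with $\psi = 0$). By \textbf{A2}, $w_n := e^{-it_n A} v_n$ is bounded in $H^1$, so Lemma \ref{lem:linfbound} yields, after extraction, a standard $(x_n)$ and $\psi' \in H^1$ with $\tau_{-x_n} w_n \weak \psi'$ in $H^1$ and the $L^\infty$-bound in terms of $\|\psi'\|_{L^2}$ and $\sup\|w_n\|_{H^1}$. I extract a further subsequence so that either (A) $|t_n| \to \infty$ or (B) $t_n \to \bar t \in \mathbb{R}$. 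In case (A), $(t_n)$ is already standard: set $\psi := \psi'$ and $W_n := \tau_{-x_n} w_n - \psi$. In case (B), set $t_n := 0$ (standard) and $\psi := \lim_n e^{it_n A_{x_n}} \psi'$, where the strong $H^1$-limit exists along a subsequence by \textbf{A4}; using the commutation $\tau_{-z} e^{-itA} = e^{-itA_z} \tau_{-z}$, I get $\tau_{-x_n} v_n = e^{it_n A_{x_n}}(\psi' + W_n')$ with $W_n' := \tau_{-x_n} w_n - \psi' \weak 0$, and define $W_n := \tau_{-x_n} v_n - \psi$. Then $W_n = (e^{it_n A_{x_n}}\psi' - \psi) + e^{it_n A_{x_n}} W_n'$: the first summand tends to $0$ strongly in $H^1$; the second tends to $0$ weakly in $H^1$, verified by testing against $\phi \in C_c^\infty$, moving the flow via $(\phi, e^{it_n A_{x_n}} W_n')_{L^2} = (e^{-it_n A_{x_n}} \phi, W_n')_{L^2}$, and applying \textbf{A4} to $e^{-it_n A_{x_n}}\phi$ together with $W_n' \weak 0$ in $L^2$. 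This gives (1) in both cases; (2) transfers since $\|\psi\|_{L^2} = \|\psi'\|_{L^2}$ by $L^2$-unitarity; (4) follows from \textbf{A1}, \textbf{A2}, and weak lower semicontinuity.

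For the identities in (3), the $L^2$ one is immediate from $W_n \weak 0$ in $L^2$. The energy identity rests on the observation $(Av_n, v_n) = \|\tau_{-x_n} w_n\|_{x_n}^2$ (since $A$ commutes with its own flow and $\|v\|_z^2 = (A\tau_z v, \tau_z v)$ by definition); expanding $\|\psi + W_n\|_{x_n}^2$, the cross term $(A_{x_n}\psi, W_n)_{H^{-1}, H^1}$ vanishes by pairing the $H^{-1}$-precompactness of $A_{x_n}\psi$ (from \textbf{A3}) against $W_n \weak 0$ in $H^1$. The $L^r$ identity amounts to $\|g_n + h_n\|_r^r = \|g_n\|_r^r + \|h_n\|_r^r + o(1)$, where $g_n := e^{it_n A_{x_n}} \psi$ and $h_n := e^{it_n A_{x_n}} W_n$, and splits by case. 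In case (A), \textbf{A5} makes $g_n$ precompact in $L^r$ while \textbf{A6} pins the weak limit at $0$, hence $g_n \to 0$ strongly in $L^r$, and the elementary inequality $\big||a+b|^r - |b|^r\big| \lesssim |a|^r + |a||b|^{r-1}$ closes the identity. In case (B), $t_n = 0$ gives $h_n = W_n \weak 0$ in $H^1$; in one dimension the compact embedding $H^1 \hookrightarrow C_{\mathrm{loc}}$ yields $W_n \to 0$ a.e.\ along a subsequence, and Brezis--Lieb finishes.

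The main obstacle is case (B): the bounded time shift $\bar t$ must be absorbed into the profile $\psi$ via \textbf{A4}, and the redefined remainder $W_n$ must still be shown to converge weakly to zero in $H^1$ so that all orthogonality identities remain available. Case (A) is the genuinely dispersive regime, where assumptions \textbf{A5}--\textbf{A6} supply the $L^r$ identity essentially for free.
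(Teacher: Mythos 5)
Your overall scaffolding matches the paper's: pick a near-maximizing time $t_n$, apply Lemma \ref{lem:linfbound} to $e^{-it_nA}v_n$, split into the cases $|t_n|\to\infty$ and $t_n\to\overline{t}$, absorb the bounded time shift into the profile via \textbf{A4}, and verify the three orthogonality identities. The $L^2$ identity, the energy identity, claim (4), and the handling of the bounded-time case are all essentially the paper's proof, with the minor cosmetic difference that you redefine $(\psi,W_n,t_n)$ once at the start rather than running the proof first and then post-processing.

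There is, however, a genuine gap in your treatment of the $L^r$ identity in case (A). You invoke \textbf{A6} to ``pin the weak limit at $0$'' and conclude $g_n=e^{it_nA_{x_n}}\psi\to 0$ in $L^r$, so that the elementary inequality $\big||a+b|^r-|b|^r\big|\lesssim|a|^r+|a||b|^{r-1}$ closes the argument. But the Proposition only assumes \textbf{A1}--\textbf{A5}; \textbf{A6} is not available here (it first enters in Lemma \ref{lem:conv} and Theorem \ref{the:profiledec}). With only \textbf{A5}, you know that $(g_n)$ has a strong $L^r$-limit along a subsequence, but nothing forces that limit to be $0$; and if the limit is nonzero, the elementary inequality yields an $O(1)$ error term, not $o(1)$. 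The paper avoids this by running a single Br\'ezis--Lieb argument for both cases: take $g_n=e^{it_nA_{x_n}}\psi$ (precompact in $L^r$ by \textbf{A5}, whatever its limit) and $f_n=e^{it_nA_{x_n}}W_n$, and show $f_n\to 0$ a.e.\ by testing the weak $H^1$-limit against $\chi\in C_c^\infty$ via $(f_n,\chi)=(W_n,e^{-it_nA_{x_n}}\chi)$, using \textbf{A5} again to make $e^{-it_nA_{x_n}}\chi$ precompact in $L^r$ and $W_n\weak 0$ in $L^r$. This identifies the a.e.\ limit of $f_n$ as $0$ without any information about the limit of $g_n$, and Br\'ezis--Lieb then applies regardless of whether that limit vanishes. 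You should replace your case-(A) argument with this one; once you do, the case split disappears and the proof uses only \textbf{A1}--\textbf{A5} as required.
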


\begin{proof}
  Pick $t_{n}\in \mathbb{R}$ such that
  $\|e^{-it_{n}A}v_{n}\|_{L^{\infty}_{x}}\ge \frac 12
    \|e^{-itA}v_{n}\|_{L^{\infty}_{t}L^{\infty}_{x}}$.
  Clearly we can assume that either $t_{n}\to\pm \infty$, or
  $t_{n}$ converges to a finite value; we shall prove below 
  that we can actually choose $t_{n}$ to be a standard sequence.
  By \textbf{A2}, $e^{-it_{n}A}v_{n}$ is bounded in $H^{1}$.
  By Lemma \ref{lem:linfbound}, we can find a standard
  sequence $(x_{n})$ such that, up to a subsequence,
  \begin{equation*}
    \tau_{-x_{n}}e^{-it_{n}A}v_{n}=\psi+W_{n},
    \qquad
    W_{n}\weak 0 \ \text{in}\ H^{1},
  \end{equation*}
  and
  \begin{equation*}
    \limsup\|e^{-it_{n}A}v_{n}\|_{L^{\infty}}\le
    2\|\psi\|_{L^{2}}^{1/2}\sup\|e^{-it_{n}A}v_{n}\|_{H^{1}}^{1/2}
    \lesssim
    \|\psi\|_{L^{2}}^{1/2}\sup\|v_{n}\|_{H^{1}}^{1/2}.
  \end{equation*}
  This proves (1) and (2). Concerning (3) we have
  \begin{equation*}
    \|v_{n}\|_{L^{2}}^{2}=
    \|\tau_{-x_{n}}e^{-i t_{n}A}v_{n}\|_{L^{2}}^{2}=
    \|\psi+W_{n}\|_{L^{2}}^{2}=
    \|\psi\|_{L^{2}}^{2}+\|W_{n}\|_{L^{2}}^{2}+
    2\Re(\psi,W_{n})_{L^{2}}
  \end{equation*}
  and the last term is $o(1)$ since $W_{n}\weak 0$.
  For the second claim in (3) we note that
  \begin{equation*}
    (Av_{n},v_{n})
    =(A_{x_{n}}\tau_{-x_{n}}v_{n},\tau_{-x_{n}}v_{n})
    =\|\tau_{-x_{n}}v_{n}\|_{x_{n}}^{2}.
  \end{equation*}
  We can write
  \begin{equation}\label{eq:step1}
    \tau_{-x_{n}}v_{n}=
    (\tau_{-x_{n}}e^{it_{n}A}\tau_{x_{n}})
    \tau_{-x_{n}}e^{-i t_{n}A}v_{n}=
    e^{it_{n}A_{x_{n}}}\tau_{-x_{n}}e^{-i t_{n}A}v_{n}=
    e^{it_{n}A_{x_{n}}}(\psi+W_{n})
  \end{equation}
  so that
  \begin{equation*}
    \|\tau_{-x_{n}}v_{n}\|_{x_{n}}^{2}=
    \|e^{it_{n}A_{x_{n}}}\psi\|_{x_{n}}^{2}
    +
    \|e^{it_{n}A_{x_{n}}}W_{n}\|_{x_{n}}^{2}+
    2\Re(A_{x_{n}}e^{it_{n}A_{x_{n}}}\psi,
      e^{it_{n}A_{x_{n}}}W_{n})_{L^{2}}.
  \end{equation*}
  The last term coincides with
  \begin{equation*}
    2\Re(A_{x_{n}}\psi,W_{n})_{L^{2}}=o(1)
    \ \ \ \text{up to a subsequence,}
  \end{equation*}
  since by Assumption \textbf{A3} the first factor is 
  precompact in
  $H^{-1}$ while $W_{n}\weak0$ in $H^{1}$.

  To prove the last claim in (3) we use the 
  \emph{Br\'{e}zis--Lieb} theorem in the form:
  \emph{Let $f_{n},g_{n}$ be bounded sequences in $L^{r}$
  such that $g_{n}$ converges in $L^{r}$ and a.e.~while 
  $f_{n}\to0$ a.e. Then}
  \begin{equation*}
    \|f_{n}+g_{n}\|_{L^{r}}^{r}=
    \|f_{n}\|_{L^{r}}^{r}+
    \|g_{n}\|_{L^{r}}^{r}+o(1).
  \end{equation*}
  We apply this result to \eqref{eq:step1} as follows:
  \begin{equation*}
    \tau_{-x_{n}}v_{n}=f_{n}+g_{n},
    \qquad
    f_{n}=
    e^{it_{n}A_{x_{n}}}W_{n},
    \qquad
    g_{n}= e^{it_{n}A_{x_{n}}}\psi.
  \end{equation*}
  The sequence $g_{n}$ is precompact
  in $L^{r}$ by \textbf{A5} and hence up to
  a subsequence it converges in $L^{r}$.
  The sequence $f_{n}$ is bounded in $H^{1}$ hence in $L^{r}$,
  and passing to a subsequence we have $f_{n}\weak \phi$
  in $H^{1}$; by compact embedding we may also assume that
  $f_{n}\to \phi$ in $L^{2}_{loc}$ and a.e.;
  we show that $\phi=0$. Let $\chi$ be any test function;
  $(e^{-it_{n}A_{x_{n}}}\chi)$ is precompact in $L^{r}$ by 
  \textbf{A5};
  on the other hand, $W_{n}\weak 0$ in $H^{1}$, and hence
  $W_{n}\weak 0$ in $L^{r}$;
  summing up we obtain, up a subsequence,
  \begin{equation*}
    (\phi,\chi)_{L^{2}}=\lim(f_{n},\chi)=
    \lim(W_{n},e^{-it_{n}A_{x_{n}}}\chi)=
    0
  \end{equation*}
  and this implies $\phi=0$.
  Thus Br\'{e}zis--Lieb applies and we get (3).

  We now prove that if 
  $t_{n}\to \overline{t}\in \mathbb{R}$ we can modify the 
  construction so that $t_{n}=0$ for all $n$,
  i.e., we can take $t_{n}$ as a standard sequence. 
  To this end we write
  \begin{equation*}
    \tau_{-x_{n}}e^{-it_{n}A}v_{n}=
    e^{-it_{n}A_{x_{n}}}\tau_{-x_{n}} v_{n}
    \quad\text{so that}\quad 
    \tau_{-x_{n}} v_{n}=
    e^{it_{n}A_{x_{n}}} (\psi+W_{n}).
  \end{equation*}
  By precompactness in $H^{1}$, see A4, 
  passing to a subsequence we have
  \begin{equation*}
    e^{it_{n}A_{x_{n}}} \psi\to \widetilde{\psi},
    \qquad
    e^{it_{n}A_{x_{n}}}W_{n}\weak0
    \quad\text{in}\quad H^{1}.
  \end{equation*}
  Define
  \begin{equation*}
    \widetilde{W}_{n}=
    \tau_{-x_{n}} v_{n}-\widetilde{\psi}=
    e^{it_{n}A_{x_{n}}}\psi-\widetilde{\psi}
    +e^{it_{n}A_{x_{n}}}W_{n}.
  \end{equation*}
  Then we have 
  $\tau_{-x_{n}}v_{n}
    =\widetilde{\psi}+\widetilde{W}_{n}$
  with $\widetilde{W}_{n}\weak0$ in $H^{1}$;
  moreover 
  $\|\widetilde{\psi}\|_{L^{2}}=\|\psi\|_{L^{2}}+o(1)$
  and 
  $\|\widetilde{W}_{n}\|_{L^{2}}=\|W_{n}\|_{L^{2}}+o(1)$
  and similarly for the $L^{r}$ norms
  (recall $H^{1}\hookrightarrow L^{r}$)
  so that (1), (2) and the first relation in (3)
  hold with $\psi,W_{n}$ replaced by
  $\widetilde{\psi},\widetilde{W}_{n}$.
  Moreover
  \begin{equation*}
    \|e^{it_{n}A_{x_{n}}}\psi\|_{L^{r}}=
      \|\widetilde{\psi}\|_{L^{r}}+o(1),
    \qquad
    \|e^{it_{n}A_{x_{n}}}W_{n}\|_{L^{r}}=
      \|\widetilde{W}_{n}\|_{L^{r}}+o(1)
  \end{equation*}
  by construction. The remaining property in (3) follows from
  \begin{equation*}
    (A_{x_{n}}\psi,\psi)=
    (A_{x_{n}}e^{it_{n}A_{x_{n}}}\psi,e^{it_{n}A_{x_{n}}}\psi)=
    \|e^{it_{n}A_{x_{n}}}\psi\|_{x_{n}}^{2}=
    \|\widetilde{\psi}\|_{x_{n}}+o(1)
  \end{equation*}
  and the similar property for $\widetilde{W}_{n}$.

  We prove (4). Since $\tau_{-x_{n}}e^{-it_{n}A}v_{n}\weak \psi$
  in $H^{1}$, by weak semicontinuity and \textbf{A1}
  \begin{equation*}
      \|\psi\|_{H^{1}}\le
    \liminf \|\tau_{-x_{n}}e^{-it_{n}A}v_{n}\|_{H^{1}}
    \lesssim
    \liminf\|v_{n}\|_{H^{1}}\le
    \sup\|v_{n}\|_{H^{1}}.
  \end{equation*}
  The bound for $W_{n}$ then follows by the triangle
  inequality.
\end{proof}

\begin{lemma}\label{lem:conv}
  Assume \textbf{A1}--\textbf{A6}.
  Then for all sequences 
  $(x_{n}),(y_{n}),(s_{n})\subset \mathbb{R}$ 
  and $(h_{n})\subset H^{1}$ with $h_{n}\weak 0$ in $H^{1}$,
  we have:
  \begin{enumerate}
    \item
    if $|s_{n}|+|x_{n}-y_{n}|\to+\infty$ then
    $\tau_{-x_{n}}e^{is_{n}A}\tau_{y_{n}}\psi\weak0$ in $H^{1}$
    up to a subsequence
    \item 
    if $\tau_{-x_{n}}e^{is_{n}A}\tau_{y_{n}}h_{n}\weak \phi$
    in $H^{1}$ and $\phi\neq0$, then 
    $|s_{n}|+|x_{n}-y_{n}|\to+\infty$.
  \end{enumerate}
\end{lemma}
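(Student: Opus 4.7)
The plan is to rewrite the operator $\tau_{-x_n}e^{is_n A}\tau_{y_n}$ via the two conjugation identities
\[
 \tau_{-x_n}e^{is_n A}\tau_{y_n}=\tau_{z_n}e^{is_n A_{y_n}}=e^{is_n A_{x_n}}\tau_{z_n},\qquad z_n:=y_n-x_n,
\]
so that a spatial translation is separated from the flow, and then to analyse weak $H^1$ limits by splitting into the cases where $|s_n|$ stays bounded or diverges.

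For part (1), I would set $h_n=e^{is_n A_{y_n}}\psi$, which is bounded in $H^1$ thanks to \textbf{A1}--\textbf{A2}, so that $F_n:=\tau_{-x_n}e^{is_n A}\tau_{y_n}\psi=\tau_{z_n}h_n$ is bounded in $H^1$. If $|s_n|$ stays bounded then $|z_n|\to\infty$ by hypothesis; after extracting $s_n\to\bar s$, assumption \textbf{A4} produces a further subsequence with $h_n\to g$ in $H^1$, so $\tau_{z_n}(h_n-g)\to 0$ in $H^1$ norm while $\tau_{z_n}g\weak 0$ in $H^1$ by the classical fact that translates of a fixed $H^1$ function with diverging translation parameter vanish weakly (test against $C_c^\infty$). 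If instead $|s_n|\to\infty$, \textbf{A6} yields a subsequence with $h_n\weak 0$ in $H^1$, while \textbf{A5} provides precompactness of $h_n$ in $L^r$; the two together force $h_n\to 0$ in $L^r$, and hence $\tau_{z_n}h_n\to 0$ in $L^r$ by translation invariance. Any weak $H^1$ subsequential limit $g$ of $\tau_{z_n}h_n$ must then vanish, since the compact embedding $H^1\hookrightarrow L^r_{\mathrm{loc}}$ forces $\tau_{z_n}h_n\to g$ in $L^r_{\mathrm{loc}}$, while simultaneously $\tau_{z_n}h_n\to 0$ in $L^r$.

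For part (2), I would argue by contrapositive: assume $|s_n|+|x_n-y_n|$ does not tend to $+\infty$, so that after extraction both $s_n\to\bar s$ and $z_n\to z$ are finite. Using the alternative rewriting $F_n=e^{is_n A_{x_n}}\tau_{z_n}h_n$ together with $h_n\weak 0$, the translates satisfy $\tau_{z_n}h_n\weak 0$ in $H^1$, since for $\eta\in C_c^\infty$ one has $(\tau_{z_n}h_n,\eta)_{L^2}=(h_n,\tau_{-z_n}\eta)_{L^2}\to 0$ by continuity of translation in $L^2$ combined with weak $L^2$ vanishing of $h_n$. For the same test function,
\[
 (F_n,\eta)_{L^2}=(\tau_{z_n}h_n,e^{-is_n A_{x_n}}\eta)_{L^2};
\]
by \textbf{A4}, $e^{-is_n A_{x_n}}\eta\to\xi$ in $H^1$ along a subsequence, and splitting the pairing as $(\tau_{z_n}h_n,\xi)+(\tau_{z_n}h_n,e^{-is_n A_{x_n}}\eta-\xi)$ shows both pieces tend to zero. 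Combined with the uniform $H^1$ boundedness of $F_n$ inherited from \textbf{A2} (via the conjugation formula), this yields $F_n\weak 0$ in $H^1$, contradicting $\phi\neq 0$.

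The delicate point is the second subcase of part (1), where a translation of amplitude diverging to infinity is applied to a sequence that is only weakly null in $H^1$; a weak $H^1$ null sequence is not preserved under arbitrary translations (as seen by $h_n=\tau_{-z_n}\psi_0$ with $\psi_0$ fixed), and the dispersive input \textbf{A5} is genuinely required to bypass this obstruction. All remaining steps are classical weak--continuity arguments exploiting \textbf{A4} and \textbf{A6}.
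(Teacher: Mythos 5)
Your proof is correct, and in part (1) it takes a genuinely different route from the paper's. The paper splits on whether $z_n=y_n-x_n$ is bounded or unbounded: for $z_n$ unbounded it pairs $(\tau_{z_n}\phi,e^{-is_nA_{x_n}}\chi)$, invokes \textbf{A5} for precompactness of the second factor and the vanishing of $\tau_{z_n}\phi$ against any fixed $L^r$ function, then transfers to general $\psi\in H^1$ by density; for $z_n$ bounded (hence $|s_n|\to\infty$) it invokes \textbf{A6} directly. You instead split on whether $|s_n|$ is bounded or diverges. In the bounded-$|s_n|$ branch you use \textbf{A4} (which the paper never touches in part (1)) to upgrade $h_n=e^{is_nA_{y_n}}\psi$ to a strongly $H^1$-convergent subsequence, after which the classical translation-to-infinity weak vanishing does the rest; in the $|s_n|\to\infty$ branch you combine \textbf{A5} and \textbf{A6} to force $h_n\to 0$ strongly in $L^r$, and you then exploit the fact that strong $L^r$ vanishing, unlike weak $H^1$ vanishing, is preserved under arbitrary translation. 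The paper never needs that observation because it pushes the translation onto the test function and uses \textbf{A5} on that side. Both routes are sound; yours has more moving parts in the divergent-$|s_n|$ case, but the final paragraph of your proposal correctly isolates where the genuine obstruction lies (translations destroy weak $H^1$ nullity), which is precisely the insight that makes your Subcase B work. For part (2) your argument is essentially the paper's, just written with the conjugation $e^{is_nA_{x_n}}\tau_{z_n}$ rather than $\tau_{z_n}e^{is_nA_{y_n}}$; both use \textbf{A4} to control $e^{-is_nA_{\cdot}}\eta$ for a test function $\eta$ after extracting $s_n\to\bar s$.
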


\begin{proof}
  (1): with $z_{n}=y_{n}-x_{n}$ and
  $\phi,\chi$ two test functions, write
  \begin{equation}\label{eq:quant0}
    (\tau_{-x_{n}}e^{is_{n}A}\tau_{y_{n}}\phi,\chi)_{L^{2}}=
    (\tau_{z_{n}}\phi,e^{-is_{n}A_{x_{n}}}\chi)_{L^{2}}.
  \end{equation}
  If $z_{n}$ is unbounded, passing to a subsequence we can
  assume $|z_{n}|\to+\infty$.
  Then $\tau_{z_{n}}\phi\weak0$ in $L^{r'}$,
  while $e^{-is_{n}A_{x_{n}}}\chi$ is precompact in $L^{r}$
  by \textbf{A5}, so that a subsequence
  of \eqref{eq:quant0} tends to 0
  as $n\to+\infty$. If $\psi\in H^{1}$, 
  for any $\epsilon$ we can find a test
  function $\phi$ such that $\|\phi-\psi\|_{L^{2}}<\epsilon$
  which implies
  \begin{equation*}
    |(\tau_{-x_{n}}e^{is_{n}A}\tau_{y_{n}}(\phi-\psi),
      \chi)_{L^{2}}|\le
    \epsilon\|\chi\|_{L^{2}}
  \end{equation*}
  and hence we have also, up to a subsequence,
  $(\tau_{-x_{n}}e^{is_{n}A}\tau_{y_{n}}\psi,
      \chi)_{L^{2}}\to 0$ for any test function $\chi$.
  Since $\tau_{-x_{n}}e^{is_{n}A}\tau_{y_{n}}\psi$ is bounded in
  $H^{1}$, we can extract a subsequence which converges
  weakly in $H^{1}$, and the previous argument shows that
  the limit must be 0 i.e.~claim (1) holds for unbounded $z_{n}$.

  If $z_{n}$ is bounded, we must have
  $|s_{n}|\to+\infty$. Moreover,
  the claim 
  $\tau_{-x_{n}}e^{is_{n}A}\tau_{y_{n}}\psi\weak0$ 
  in $H^{1}$
  is equivalent to
  $\tau_{-y_{n}}e^{is_{n}A}\tau_{y_{n}}\psi\weak0$,
  that is to say $e^{is_{n}A_{y_{n}}}\psi\weak0$ in $H^{1}$,
  which is true up to a subsequence by \textbf{A6}.

  (2): let $z_{n}=y_{n}-x_{n}$. 
  To prove the claim it is sufficient to prove that 
  if $|s_{n}|+|z_{n}|$ is bounded then 
  $\tau_{-x_{n}}e^{is_{n}A}\tau_{y_{n}}h_{n}\weak 0$
  in $H^{1}$; since $z_{n}$ is bounded, this is
  equivalent to
  $\tau_{-y_{n}}e^{is_{n}A}\tau_{y_{n}}h_{n}\weak 0$
  that is to say
  $e^{is_{n}A_{y_{n}}}h_{n}\weak 0$ in $H^{1}$.
  Since this is a bounded sequence in $H^{1}$, 
  it is sufficient to prove that for any test function
  $\chi$
  \begin{equation*}
    (e^{is_{n}A_{y_{n}}}h_{n},\chi)_{L^{2}}=
    (h_{n},e^{-is_{n}A_{y_{n}}}\chi)_{L^{2}}\to0.
  \end{equation*}
  Since $s_{n}$ is also bounded, we can assume that
  $s_{n}\to \overline{s}\in \mathbb{R}$, thus
  $e^{-is_{n}A_{y_{n}}}\chi$ is precompact in $H^{1}$ by A4.
  Passing to a subsequence we get the claim,
  since $h_{n}\weak0$ in $H^{1}$.
\end{proof}

\begin{theorem}[Profile decomposition]\label{the:profiledec}
  Assume \textbf{A1}--\textbf{A6}, and
  suppose that a bounded sequence in $H^{1}(\mathbb{R})$ 
  is given.
  Then we can extract a subsequence $(u_{n})_{n\ge1}$,
  and $\forall j\in \mathbb{N}$ we can find 
  $\psi_{j}\in H^{1}$ and standard sequences
  $(t^{n}_{j})_{n\ge1}$,
  $(x^{n}_{j})_{n\ge1}\subset \mathbb{R}$
  with the following properties.
  Write for $J\in \mathbb{N}$
  \begin{equation}\label{eq:profun}
    u_{n}=\sum_{j=1}^{J}
    e^{it^{n}_{j}A}\tau_{x^{n}_{j}}\psi_{j}+R^{n}_{J}.
  \end{equation}
  Then we have:
  \begin{enumerate}[label=(\alph*)]
    \item 
    $|t^{n}_{j}-t^{n}_{k}|+
      |x^{n}_{j}-x^{n}_{k}|\to+\infty$
    as $n\to+\infty$ for $j\neq k$
    \item 
    $\limsup_{n}\|e^{-itA}R^{n}_{J}\|_{L^{\infty}L^{\infty}}
      \lesssim\|\psi_{J}\|_{L^{2}}^{1/2}
      \sup_{n}\|u_{n}\|_{H^{1}}^{1/2} \to0$
    as $J\to \infty$
    \item 
    for each $J$ and $n\to+\infty$ we have
    \begin{itemize}[label=$\star$]
      \item 
      $\|u_{n}\|_{L^{2}}^{2}=\sum_{j=1}^{J}
      \|\psi_{j}\|_{L^{2}}^{2}+
      \|R^{n}_{J}\|_{L^{2}}^{2}+o(1)$
      \item 
      $(Au_{n},u_{n})_{L^{2}}=
        \sum_{j=1}^{J}
        (A \tau_{x^{n}_{j}}\psi_{j},
          \tau_{x^{n}_{j}}\psi_{j})_{L^{2}}+
        (AR^{n}_{J},R^{n}_{J})_{L^{2}}+o(1)$
      \item 
      $\|u_{n}\|_{L^{r}}^{r}=
        \sum_{j=1}^{J}
        \|e^{it_{j}^{n}A}\tau_{x^{n}_{j}}\psi_{j}\|_{L^{r}}^{r}+
        \|R^{n}_{J}\|_{L^{r}}^{r}
        +o(1)$.
    \end{itemize}
    \item 
    if $\psi_{J}=0$ for some $J$ then $\psi_{j}=0$ for all
    $j\ge J$
    \item 
    $\|\psi_{j}\|_{H^{1}}+\|R_{J}^{n}\|_{H^{1}}\lesssim
      \sup_{n}\|u_{n}\|_{H^{1}}$.
  \end{enumerate}
\end{theorem}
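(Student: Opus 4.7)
The plan is to iterate Proposition~\ref{pro:profiles} with a Cantor diagonal extraction. Applying it to the given sequence yields a first profile $\psi_1$, standard sequences $(t_1^n),(x_1^n)$, and a remainder of the form $R_1^n = e^{it_1^n A}\tau_{x_1^n}W_n^{(1)}$ with $W_n^{(1)}\weak 0$ in $H^1$, so that $u_n = e^{it_1^n A}\tau_{x_1^n}\psi_1 + R_1^n$. I then apply Proposition~\ref{pro:profiles} again to $(R_1^n)$, still bounded in $H^1$ by (4), to extract $\psi_2,(t_2^n,x_2^n)$, and so on. A diagonal choice produces a subsequence, still denoted $(u_n)$, for which every stage converges. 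Bound (e) and the $L^2$ and quadratic-form identities in (c) drop out of Proposition~\ref{pro:profiles}(4) and (3) by telescoping; the $L^r$ identity in (c) follows in the same way from the single-profile $L^r$ identity, using $e^{it_j^nA}\tau_{x_j^n}\psi=\tau_{x_j^n}e^{it_j^nA_{x_j^n}}\psi$ and translation invariance of $L^r$.

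The heart of the proof is the asymptotic orthogonality (a), which I would establish by induction on $k$, and within each $k$ by downward induction on $j\in\{1,\dots,k-1\}$. Combining the decompositions at levels $j$ and $k-1$ gives
\begin{equation*}
R_{k-1}^n = R_j^n-\sum_{i=j+1}^{k-1}e^{it_i^nA}\tau_{x_i^n}\psi_i,
\end{equation*}
and substituting $R_j^n=e^{it_j^nA}\tau_{x_j^n}W_n^{(j)}$ and applying $\tau_{-x_k^n}e^{-it_k^nA}$ yields
\begin{equation*}
\tau_{-x_k^n}e^{i(t_j^n-t_k^n)A}\tau_{x_j^n}W_n^{(j)}=\tau_{-x_k^n}e^{-it_k^nA}R_{k-1}^n+\sum_{i=j+1}^{k-1}\tau_{-x_k^n}e^{i(t_i^n-t_k^n)A}\tau_{x_i^n}\psi_i.
\end{equation*}
The first term on the right weakly tends to $\psi_k$ in $H^1$ by the very definition of the $k$-th extraction. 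For $j<i\le k-1$ the inductive hypothesis supplies $|t_i^n-t_k^n|+|x_i^n-x_k^n|\to\infty$, so Lemma~\ref{lem:conv}(1) makes every summand on the right weakly vanish. Hence the left side converges weakly to $\psi_k$ in $H^1$, and Lemma~\ref{lem:conv}(2) applied with $h_n=W_n^{(j)}$ forces $|t_j^n-t_k^n|+|x_j^n-x_k^n|\to\infty$ whenever $\psi_k\neq 0$. If instead $\psi_J=0$ at some stage, Proposition~\ref{pro:profiles}(2) gives $\limsup_n\|e^{-itA}R_{J-1}^n\|_{L^\infty L^\infty}=0$, so every subsequent profile can be declared zero, yielding (d).

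Property (b) is obtained by applying Proposition~\ref{pro:profiles}(2) at the next extraction to $R_J^n$, combined with (e) to convert $\sup\|R_J^n\|_{H^1}$ into $\sup\|u_n\|_{H^1}$; the convergence to zero as $J\to\infty$ follows from the $L^2$ identity, which gives $\sum_j\|\psi_j\|_{L^2}^2\le\liminf\|u_n\|_{L^2}^2<\infty$ and hence $\|\psi_j\|_{L^2}\to 0$. The main obstacle is the orthogonality in (a): a naive appeal to Lemma~\ref{lem:conv}(2) directly on $\tau_{-x_k^n}e^{-it_k^n A}R_{k-1}^n\weak\psi_k$ fails because $R_{k-1}^n$ is not of the shape $\tau_{y_n}h_n$ with $h_n\weak 0$, but rather a mixture of the vanishing $W_n^{(k-1)}$ with the already-extracted earlier profiles; those spurious contributions must be stripped off one by one via Lemma~\ref{lem:conv}(1), and the two-level induction above is precisely what is needed to make this bookkeeping work.
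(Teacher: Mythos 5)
Your proof is correct and follows the paper's approach almost exactly: iterate Proposition~\ref{pro:profiles} with a diagonal extraction, obtain the Pythagorean identities in (c) and bounds (b), (e) by telescoping, and establish the asymptotic orthogonality (a) by stripping off intermediate profiles with Lemma~\ref{lem:conv}(1) before invoking Lemma~\ref{lem:conv}(2). The only cosmetic difference is that you perform the stripping in the $k$-th frame and therefore need a nested downward induction on $j$, whereas the paper works in the $\ell$-th frame and gets by with the single outer induction hypothesis on pairs among $\{1,\dots,J-1\}$.
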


Note that in view of (d) we can define
\begin{equation*}
  J_{max}=
  \begin{cases}
    0 &\text{if $ \psi_{j}=0 $ for all $j\ge0$}\\
    J\in \mathbb{N} &\text{if $ \psi_{j}\neq0 $ for $j=1,\dots,J$
      and 0 afterwards}\\
    \infty &\text{if $\psi_{j}\neq0$ for all $j$.}
  \end{cases}
\end{equation*}

\begin{proof}
  The idea is to apply Proposition \ref{pro:profiles}
  iteratively. Let $(u_{n})$ be the initial sequence.
  As a first step we apply the Proposition to
  $v_{n}=u_{n}$, we extract an appropriate subsequence, 
  and we call $x^{n}_{1},t^{n}_{1}$ the
  resulting standard sequences
  and $\psi_{1}$ the resulting profile.
  Thus we can write
  \begin{equation*}
    u_{n}=e^{it^{n}_{1}A}\tau_{x^{n}_{1}}\psi_{1}+R^{n}_{1}
    \quad\text{where}\quad 
    R^{n}_{1}=e^{it^{n}_{1}A}\tau_{x^{n}_{1}}W^{n}_{1}.
  \end{equation*}
  Next, we apply the Proposition to
  $v_{n}=R^{n}_{1}$ and we get
  \begin{equation*}
    u_{n}=e^{it^{n}_{1}A}\tau_{x^{n}_{1}}\psi_{1}+
          e^{it^{n}_{2}A}\tau_{x^{n}_{2}}\psi_{2}+R^{n}_{2}
    \quad\text{where}\quad 
    R^{n}_{2}=e^{it^{n}_{2}A}\tau_{x^{n}_{2}}W^{n}_{2},
  \end{equation*}
  and so on. We finally apply a diagonal procedure to extract
  a subsequence from the original sequence; we do not change 
  notation for the extracted sequence, for the sake of simplicity.
  We next prove the claims (a)--(e).

  (c): we have 
  $\|u_{n}\|_{L^{2}}^{2}=
    \|\psi_{1}\|_{L^{2}}^{2}+\|W^{n}_{1}\|_{L^{2}}^{2}+o(1)$
  and $\|W^{n}_{1}\|_{L^{2}}=\|R^{n}_{1}\|_{L^{2}}$,
  which gives the $L^{2}$ claim for $J=1$.
  Similarly we have
  $\|R^{n}_{1}\|_{L^{2}}^{2}=
    \|\psi_{2}\|_{L^{2}}^{2}+\|W^{n}_{2}\|_{L^{2}}^{2}+o(1)$
  and $\|W^{n}_{2}\|_{L^{2}}=\|R^{n}_{2}\|_{L^{2}}$,
  proving the $L^{2}$ claim for $J=2$, and so on.

  Concerning the second claim, the first step gives
  \begin{equation*}
    (Au_{n},u_{n})_{L^{2}}=
    \|\psi_{1}\|_{x^{n}_{1}}^{2}+\|W^{n}_{1}\|_{x^{n}_{1}}^{2}
    +o(1).
  \end{equation*}
  Since
  \begin{equation*}
    \|W^{n}_{1}\|_{x^{n}_{1}}^{2}=
    (A \tau_{x^{n}_{1}}W^{n}_{1},\tau_{x^{n}_{1}}W^{n}_{1})=
    (A e^{-it^{n}_{1}A}R^{n}_{1},e^{-it^{n}_{1}A}R^{n}_{1})=
    (A R^{n}_{1},R^{n}_{1}),
  \end{equation*}
  this is the second claim for $J=1$. Iterating, we get
  the claim for all $J\ge1$.

  Concerning the $L^{r}$ claim, we have
  \begin{equation*}
    \|u_{n}\|_{L^{r}}^{r}=
    \|e^{it^{n}_{1}A_{x^{n}_{1}}}\psi_{1}\|_{L^{r}}^{r}+
    \|e^{it^{n}_{1}A_{x^{n}_{1}}}W^{n}_{1}\|_{L^{r}}^{r}
    +o(1).
  \end{equation*}
  We note that
  \begin{equation*}
    \|e^{it^{n}_{1}A_{x^{n}_{1}}}\psi_{1}\|_{L^{r}}=
    \|e^{it^{n}_{1}A}\tau_{x^{n}_{1}}\psi_{1}\|_{L^{r}},
    \qquad
    \|e^{it^{n}_{1}A_{x^{n}_{1}}}W^{n}_{1}\|_{L^{r}}=
    \|R^{n}_{1}\|_{L^{r}}
  \end{equation*}
  and this is exactly the $L^{r}$ claim for $J=1$.
  We proceed as before for $J\ge1$.

  (b): from (4) in Proposition \ref{pro:profiles} we get
  $\|R^{n}_{J}\|_{H^{1}}\lesssim\sup\|u_{n}\|_{H^{1}}<\infty$.
  Hence by (2)
  \begin{equation*}
    \limsup_{n}\|e^{-itA}R^{n}_{J}\|_{L^{\infty}L^{\infty}}
    \lesssim
      \|\psi_{J}\|_{L^{2}}^{1/2}\sup\|R^{n}_{J}\|_{H^{1}}^{1/2}
    \lesssim
    \|\psi_{J}\|_{L^{2}}^{1/2}\sup\|u_{n}\|_{H^{1}}^{1/2}.
  \end{equation*}
  By the $L^{2}$ orthogonality (c) proved before, we know that
  the series $\sum\|\psi_{j}\|_{L^{2}}^{2}$ converges.
  We conclude that $\|\psi_{J}\|_{L^{2}}\to 0$
  as $J\to+\infty$, proving (b).

  (a): we proceed by induction on $J$, i.e., we suppose the
  property holds for sequences $t^{n}_{j},x^{n}_{j}$
  with $j=1,\dots,J-1$ and we prove it for the
  sequences up to $j=J$.
  If $\psi_{J}=0$ we can actually stop the construction,
  indeed we can take $\psi_{j}=0$ for all $j\ge J$,
  $R^{n}_{j+1}=R^{n}_{j}$ for all $j\ge J$, and in
  particular we get 
  $\limsup_{n}\|R^{n}_{j}\|_{L^{\infty}L^{\infty}}=0$
  for $j\ge J$; note that this proves also (d).

  Thus we can assume $\psi_{k}$ is nonzero for all $k\le J$.
  For $\ell<J$ we can write
  \begin{equation*}
    u_{n}=
    \sum_{j=1}^{J-1}e^{it^{n}_{j}A}\tau_{x^{n}_{j}}\psi_{j}
      +R^{n}_{J-1}=
    \sum_{j=1}^{\ell}e^{it^{n}_{j}A}\tau_{x^{n}_{j}}\psi_{j}
      +R^{n}_{\ell}
  \end{equation*}
  so that
  \begin{equation*}
    R^{n}_{J-1}-R^{n}_{\ell-1}=
    -\sum_{k=\ell}^{J-1}e^{it^{n}_{k}A}\tau_{x^{n}_{k}}\psi_{k}
  \end{equation*}
  and
  \begin{equation*}
    \tau_{-x^{n}_{\ell}}e^{-it^{n}_{\ell}A}
    (R^{n}_{J-1}-R^{n}_{\ell-1})=
    -\psi_{\ell}-
    \sum_{k=\ell+1}^{J-1}
    \tau_{-x^{n}_{\ell}}
    e^{i(t^{n}_{k}-t^{n}_{\ell})A}\tau_{x^{n}_{k}}\psi_{k}.
  \end{equation*}
  Setting $s_{n}=t^{n}_{k}-t^{n}_{\ell}$,
  by the induction assumption we have
  $|s_{n}|+|x^{n}_{k}-x^{n}_{\ell}|\to+\infty$.
  Hence we can apply Lemma \ref{lem:conv}, part (1), and
  we obtain that the last sum $\weak0$ in $H^{1}$
  (up to a subsequence). On the other hand, by definition
  \begin{equation*}
    \tau_{-x^{n}_{\ell}}e^{-it^{n}_{\ell}A}R^{n}_{\ell-1}=
    \psi_{\ell}+W^{n}_{\ell}\weak \psi_{\ell}
  \end{equation*}
  at the left hand side. By cancelation we deduce
  \begin{equation*}
    h_{n}:=
    \tau_{-x^{n}_{\ell}}e^{-it^{n}_{\ell}A} R^{n}_{J-1}
    \weak0
    \quad\text{in}\quad H^{1}.
  \end{equation*}
  We compare this with
  \begin{equation*}
    \tau_{-x^{n}_{J}}e^{-it^{n}_{J}A} R^{n}_{J-1}
    \equiv
    \tau_{-x^{n}_{J}}
    e^{-it^{n}_{J}A}
    e^{it^{n}_{\ell}A}
    \tau_{x^{n}_{\ell}}h_{n}
    \weak \psi_{J}\neq0
    \quad\text{in}\quad H^{1}.
  \end{equation*}
  By Lemma \ref{lem:conv}, part (2), this implies
  $|t^{n}_{J}-t^{n}_{\ell}|+|x^{n}_{J}-x^{n}_{\ell}|\to+\infty$
  as claimed.

  Finally, (e) is a direct consequence of (4) and 
  Assumption \textbf{A1}.
\end{proof}

\section{The nonlinear theory}\label{sec:3}

Given the non negative selfadjoint operator $A$ on
$L^{2}(\mathbb{R})$, we consider now the 
Cauchy problem for the corresponding defocusing NLS
\begin{equation}\label{eq:diffNLS}
  iu_{t}-Au=|u|^{\beta-1}u,
  \qquad
  u(0,x)=\phi(x),
  \qquad
  \beta>5
\end{equation}
which we regard as an integral equation
\begin{equation}\label{eq:inteq}
  \textstyle
  u=e^{-itA}\phi-i\int_{0}^{t}e^{-i(t-s)A}F(v(s,x))ds,
  \qquad
  F(z)=|z|^{\beta-1}z.
\end{equation}
We are interested in a minimal set
of `abstract' assumptions which allow to extend the
standard NLS scattering theory to the Cauchy problem
\eqref{eq:inteq}. 

To this end, fix the indices
\begin{equation}\label{eq:baseindices}
  \textstyle
  p=4\frac{\beta+1}{\beta-1},
  \qquad
  r=\beta+1.
\end{equation}
Since $\frac 2p+\frac 1r=\frac 12$,
the couple $(p,r)$ is $L^{2}$--admissible in dimension 1,
with the usual terminology.
Our assumptions for the entire Section are a standard Strichartz
estimate for the flow $e^{-itA}$, and the usual conservation
laws for the nonlinear equation:

\begin{description}
  \item[B1] (Strichartz estimate)
  With $(p,r)$ as in \eqref{eq:baseindices},
  we assume that $e^{-itA}$ satisfies
  \begin{equation}\label{eq:introStr}
    \|e^{-itA}\phi\|_{L^{p}L^{r}}
    \lesssim\|\phi\|_{L^{2}},
    \qquad
    \|e^{-itA}\phi\|_{L^{p}H^{1}_{r}\cap L^{\infty}H^{1}}
    \lesssim\|\phi\|_{H^{1}},
  \end{equation}
  \begin{equation}\label{eq:introStrin}
    \textstyle
    \|\int_{0}^{t}e^{-i(t-\tau )A}F(\tau )d\tau \|
    _{L^{p}L^{r}\cap L^{\infty}L^{2}}
    \lesssim\|F\|_{L^{p'}L^{r'}},
  \end{equation}
  \begin{equation}\label{eq:introStrin2}
    \textstyle
    \|\int_{0}^{t}e^{-i(t-\tau )A}F(\tau )d\tau \|
      _{L^{p}H^{1}_{r}\cap L^{\infty}H^{1}}
    \lesssim\|F\|_{L^{p'}H^{1}_{r'}+L^{1}H^{1}}
  \end{equation}
  and the dual estimate
  \begin{equation}\label{eq:duaintrolstr}
    \textstyle
    \|\int e^{-i\tau A}F(\tau )d\tau \|_{H^{1}}
    \lesssim\|F\|_{L^{p'}H^{1}_{r'}}.
  \end{equation}
  By translation invariance of the norms, the same 
  estimates are valid for the flows
  $e^{-itA_{y}}$ with $y\in \mathbb{R}$, with implicit
  constants independent of $y$; moreover, the estimates
  are valid on arbitrary time intervals containing 0,
  with constants independent of the interval.

\end{description}

\begin{description}
  \item[B2] (Nonlinear energy conservation):
  let $u\in C_{I} H^{1}\cap L_{I}^{\infty}H^{1}$ be a
  solution to \eqref{eq:inteq} defined on a time interval
  containing 0. Then $u$ satisfies the mass conservation
  \begin{equation}\label{eq:masscons}
    M(t)\equiv\|u(t,\cdot)\|_{L^{2}} \simeq M(0)
    \qquad
    \forall t\in I
  \end{equation}
  and the energy conservation
  \begin{equation}\label{eq:energycons}
    E(t)\equiv (Au(t),u(t))
    +\frac{2}{\beta+1}\|u(t)\|_{L^{\beta+1}}^{\beta+1}
    \simeq E(0)    \qquad
    \forall t\in I.
  \end{equation}
\end{description}

\begin{proposition}[Global existence]\label{pro:globex}
  Assume \textbf{B1--B2}.
  Let $\phi\in H^{1}(\mathbb{R})$ and $\beta>1$.
  Then there exists a unique global solution $u(\phi)$ to
  Problem \eqref{eq:inteq} in $L^{\infty}H^{1}\cap  C H^{1}$.
  The solution satisfies the energy estimates
  $M(t)\simeq M(0)$ and $E(t)\simeq E(0)$ for all times.
\end{proposition}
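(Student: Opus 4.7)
The plan is the standard three-step scheme: local well-posedness by a fixed point driven by B1, a blow-up alternative for the maximal existence interval, and a global a priori $H^1$ bound extracted from the conservation laws B2.

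Step 1 (Local existence and uniqueness). On an interval $I=[-T,T]$ I look for $u$ as a fixed point of the Duhamel map
\[
  \Phi(u)(t) = e^{-itA}\phi - i\int_{0}^{t}e^{-i(t-s)A}F(u(s))\,ds,
  \qquad F(z)=|z|^{\beta-1}z,
\]
in a closed ball of $X_{I}:=L^{\infty}(I;H^{1})\cap L^{p}(I;H^{1}_{r})$. The linear term is controlled by the first half of B1. For the Duhamel term I use the $L^{1}H^{1}$ component of \eqref{eq:introStrin2}: combining the one-dimensional Sobolev embedding $H^{1}\hookrightarrow L^{\infty}$ with the pointwise bound $|F(u)|+|\partial_{x}F(u)|\lesssim |u|^{\beta-1}(|u|+|\partial_{x}u|)$ and H\"{o}lder in time, I obtain an estimate of the form
\[
  \|F(u)\|_{L^{1}_{I}H^{1}} \lesssim T\,\|u\|_{L^{\infty}_{I}H^{1}}^{\beta},
\]
which makes $\Phi$ a contraction on a ball of radius $\sim\|\phi\|_{H^{1}}$ provided $T$ is small depending only on $\|\phi\|_{H^{1}}$. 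Uniqueness in $L^{\infty}_{I}H^{1}\cap CH^{1}$ follows from the analogous difference estimate and Gronwall, after noting that continuity in $t$ is read off the integral equation via the $L^{\infty}H^{1}$ Strichartz bound.

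Step 2 (Blow-up alternative). Concatenating the local solutions yields a maximal existence interval $(T_{-},T_{+})$. Since the local time in Step 1 depends only on the $H^{1}$ size of the datum, the standard contradiction argument gives that if $T_{+}<+\infty$ then $\|u(t)\|_{H^{1}}\to+\infty$ as $t\nearrow T_{+}$, and symmetrically at $T_{-}$.

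Step 3 (Global a priori $H^{1}$ bound). This is where B2 enters. Mass and energy conservation yield $\|u(t)\|_{L^{2}}\simeq\|\phi\|_{L^{2}}$ and
\[
  (Au(t),u(t)) + \tfrac{2}{\beta+1}\|u(t)\|_{L^{\beta+1}}^{\beta+1}
  \simeq E(0)
\]
on the interval of existence. The defocusing sign makes the $L^{\beta+1}$ contribution non-negative, so $(Au(t),u(t))\lesssim E(0)$. Combined with the coercivity $(Av,v)+c_{0}\|v\|_{L^{2}}^{2}\simeq\|v\|_{H^{1}}^{2}$ built into the abstract setup (the $z=0$ case of A1, implicit in the compatibility of B1 with the $H^{1}$ Strichartz norm) and Sobolev control $E(0)\lesssim\|\phi\|_{H^{1}}^{2}+\|\phi\|_{H^{1}}^{\beta+1}$, this produces
\[
  \sup_{t\in(T_{-},T_{+})}\|u(t)\|_{H^{1}}
  \lesssim \|\phi\|_{H^{1}}+\|\phi\|_{H^{1}}^{(\beta+1)/2}.
\]
This rules out blow-up via Step 2, forcing $(T_{-},T_{+})=\mathbb{R}$; the equivalences $M(t)\simeq M(0)$ and $E(t)\simeq E(0)$ then hold globally.

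The only real difficulty is the nonlinear estimate in Step 1: it has to close in $L^{\infty}H^{1}$ and simultaneously gain a positive power of $T$. Both are immediate in one dimension thanks to $H^{1}\hookrightarrow L^{\infty}$, which places $\beta-1$ copies of $u$ in $L^{\infty}_{x}$ while the remaining factor (or the derivative) sits in $H^{1}$; the hypothesis $\beta>1$ is exactly what gives a strictly positive H\"{o}lder exponent in time.
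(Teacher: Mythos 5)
Your proposal is correct and follows essentially the same scheme as the paper: a contraction mapping for local existence in an $L^{\infty}_{I}H^{1}$-type space, using the $L^{1}H^{1}$ branch of \eqref{eq:introStrin2} together with the fact that $H^{1}(\mathbb{R})$ is an algebra (equivalently, $H^{1}\hookrightarrow L^{\infty}$ plus the pointwise bound on $F$ and $\partial_{x}F$), followed by the standard continuation argument driven by the conservation laws in \textbf{B2}. The only cosmetic difference is that you set up the fixed point in $L^{\infty}H^{1}\cap L^{p}H^{1}_{r}$, while the paper works in $L^{\infty}_{I}H^{1}$ alone; the extra component is not needed at this stage. One small imprecision: the positive power of $T$ in the nonlinear estimate comes from passing from $L^{\infty}_{I}$ to $L^{1}_{I}$ in time and is always $T^{1}$, independently of $\beta$; the hypothesis $\beta>1$ is what makes the nonlinearity genuinely superlinear so that the contraction constant $T\,M^{\beta-1}$ can be made small by shrinking $T$, not what produces the H\"{o}lder gain.
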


\begin{proof}
  Writing $F(z)=|z|^{\beta-1}z$,
  consider the nonlinear maps $\Phi,\Psi$ given by
  \begin{equation*}
    \textstyle
    \Psi(v)=e^{-itA}\phi+\Phi(v),
    \qquad
    \Phi(v)=-i\int_{0}^{t}e^{-i(t-s)A}F(u(t,s))ds.
  \end{equation*}
  By the energy estimate \eqref{eq:introStrin2} and
  the algebra property of $H^{1}(\mathbb{R})$ we have
  \begin{equation}\label{eq:enhom}
    \|\Psi(v)\|_{L^{\infty}_{I}H^{1}}
    \lesssim
    \|\phi\|_{H^{1}}+
    \||v|^{\beta-1}v\|_{L^{1}H^{1}}
    \lesssim
    \|\phi\|_{H^{1}}+
    \|v\|_{L^{\beta}H^{1}}^{\beta},
  \end{equation}
  \begin{equation*}
    \|\Psi(v_{1})-\Psi(v_{2})\|_{L^{\infty}_{I}H^{1}}
    \lesssim
    \left[\|v_{1}\|_{L^{\beta-1}_{I}H^{1}}+
      \|v_{2}\|_{L^{\beta-1}_{I}H^{1}}\right]^{\beta-1}
    \|v_{1}-v_{2}\|_{L^{\infty}_{I}H^{1}}
  \end{equation*}
  on any interval $I=[-T,T]$, and by H\"{o}lder's
  inequality this implies
  \begin{equation}\label{eq:eninhom}
    \|\Psi(v_{1})-\Psi(v_{2})\|_{L^{\infty}_{I}H^{1}}
    \lesssim
    T
    \left[\|v_{1}\|_{L^{\infty}_{I}H^{1}}+
      \|v_{2}\|_{L^{\infty}_{I}H^{1}}\right]^{\beta-1}
    \|v_{1}-v_{2}\|_{L^{\infty}_{I}H^{1}}
  \end{equation}
  Estimate \eqref{eq:enhom} implies that if
  $M \gtrsim\|\phi\|_{H^{1}}$ the map $\Psi$
  takes the closed ball 
  \begin{equation*}
    B_{M}=\{v\in L^{\infty}_{I}H^{1}:
    \|v\|_{ L^{\infty}_{I}H^{1}}\le M\}
  \end{equation*}
  of $L^{\infty}_{I}H^{1}$
  into itself; and \eqref{eq:eninhom} implies that
  there exist $T_{0}=T_{0}(\|\phi\|_{H^{1}})$ such that
  for $T\le T_{0}$ the map $\Psi$ is a contraction
  on $B_{M}$. Thus $\Psi$ has a unique fixed point in
  $B_{M}$, which is the unique local solution to 
  Problem \eqref{eq:inteq} on the interval $[-T_{0},T_{0}]$
  Since the image of $\Psi$ is contained in
  $C_{I}H^{1}$ we obtain that the solution is also continuous
  in time with values in $H^{1}$.

  Next, extend $u$ to a maximal solution
  on $(-T^{*},T^{*})$; in virtue of 
  \eqref{eq:masscons}--\eqref{eq:energycons} we see that the
  $H^{1}$ norm of $u(t)$ for $t\in(-T^{*},T^{*})$ remains 
  bounded by a constant which depends only on $\|\phi\|_{H^{1}}$.
  This is sufficient to apply a continuation argument, since
  the lifespan $T_{0}$ of the local solution in $H^{1}$ 
  constructed above depends only on the $H^{1}$ norm of 
  the data, and this proves that the solution is global.
\end{proof}

Besides $(p,r)$ already defined in \eqref{eq:baseindices}, 
we fix two more indices
\begin{equation*}
  \textstyle
  p=4\frac{\beta+1}{\beta-1},
  \qquad
  r=\beta+1,
  \qquad
  a=\frac{2(\beta^{2}-1)}{\beta+3},
  \qquad
  b=\frac{2(\beta^{2}-1)}{\beta^{2}-2 \beta-7}.
\end{equation*}
We note that
$\frac 2p+\frac 1r=\frac 2a+\frac 1b=\frac 12$, so that
$(p,r)$ and $(a,b)$ are both admissible couples for the
1D Schr\"{o}dinger equation. Note also that
\begin{equation*}
  \textstyle
  p'=4 \frac{\beta+1}{3 \beta+5},
  \qquad
  r'=\frac{\beta+1}{\beta}.
\end{equation*}
Note that by interpolation of the estimates \eqref{eq:introStr},
Strichartz estimates hold also
for the couple $(a,b)$, i.e. we have also
\begin{equation*}
  \|e^{-itA}\phi\|_{L^{a}L^{b}}
  \lesssim\|\phi\|_{L^{2}},
  \qquad
  \|e^{-itA}\phi\|_{L^{a}H^{1}_{b}} \lesssim\|\phi\|_{H^{1}},
\end{equation*}
\begin{equation*}
  \textstyle
  \|\int_{0}^{t}e^{-i(t-\tau )A}F(\tau )d\tau \| _{L^{a}L^{b}}
  \lesssim\|F\|_{L^{p'}L^{r'}},
\end{equation*}
\begin{equation*}
  \textstyle
  \|\int_{0}^{t}e^{-i(t-\tau )A}F(\tau )d\tau \|
    _{L^{a}H^{1}_{b}}
  \lesssim\|F\|_{L^{p'}H^{1}_{r'}}.
\end{equation*}

We notice a few estimates that will be used repeatedly in the
following.
Since $\beta r'=r$ and $|F(u)|\lesssim|u|^{\beta}$,
by H\"{o}lder's inequality we have
(note that $p/p'=p-1$)
\begin{equation}\label{eq:holder}
  \|F(u)\|_{L^{p'}L^{r'}}\le
  \|u\|_{L^{\infty}L^{r}}^{\beta-\frac{p}{p'}}
  \|u\|_{L^{p}L^{r}}^{\frac{p}{p'}}=
  \|u\|_{L^{\infty}L^{r}}^{\beta-p+1}
  \|u\|_{L^{p}L^{r}}^{p-1}  
\end{equation}
Similarly, 
since $|\partial_{x}F(u)|\lesssim|u_{x}||u|^{\beta-1}$,
we have
\begin{equation}\label{eq:holderb}
  \|F(u)\|_{L^{p'}H^{1}_{r'}}\le
  \|u\|_{L^{\infty}L^{r}}^{\beta-p+1}
  \|u\|_{L^{p}L^{r}}^{p-2}
  \|u\|_{L^{p}H^{1}_{r}}.
\end{equation}
Moreover, since
$\frac{1}{\beta p'}=\frac{\beta-1}{a}+\frac 1p$
and using the embedding $H^{1}_{b}\hookrightarrow L^{r}$ we can write
\begin{equation}\label{eq:holderc}
  \|F(v)\|_{L^{p'}_{I}H^{1}_{r'}}\lesssim
  \left\|\|v\|_{L^{r}}^{\beta-1}
  \|v\|_{H^{1}_{r}}\right\|_{L^{ p'}_{I}}\le
  \|v\|_{L^{a}_{I}L^{r}}^{\beta-1}
  \|v\|_{L^{p}_{I}H^{1}_{r}}\lesssim
  \|v\|_{L^{a}_{I}H^{1}_{b}}^{\beta-1}
  \|v\|_{L^{p}_{I}H^{1}_{r}}.
\end{equation}

\begin{proposition}[Scattering]\label{pro:smallscatt}
  Assume \textbf{B1--B2}.
  Let $\phi\in H^{1}$.
  If $u(\phi)\in L^{p}L^{r}$ then $u(\phi)$ scatters, 
  that is to say, there exist $\phi_{+},\phi_{-}\in H^{1}$ 
  such that 
  \begin{equation}\label{eq:scattering}
    \|u(\phi)(t)-e^{-itA}\phi_{\pm}\|_{H^{1}}\to0
    \quad\text{as}\quad t\to\pm\infty.
  \end{equation}
  In addition, for any $M>0$ 
  $\exists\epsilon=\epsilon(M)>0$ such that if 
  $\|\phi\|_{H^{1}}<M$ and
  $\|e^{-it A}\phi\|_{L^{p}L^{r}}<\epsilon$,
  then $u(\phi)\in L^{p}L^{r}$ and scatters.
  Similarly, for any $M>0$ $\exists \delta=\delta(M)$
  such that if 
  $\|u\|_{L^{\infty}\dot H^{1}}<\delta$ and
  $\|\phi\|_{L^{2}}<M$,
  then $u(\phi)\in L^{p}L^{r}$ and scatters.
\end{proposition}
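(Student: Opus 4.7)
My plan is to prove the main scattering statement first (namely, if $u=u(\phi)\in L^pL^r$ then $u$ scatters) and then obtain the two small--data criteria as direct consequences. The first step is a bootstrap that promotes the $L^pL^r$ information on $u$ to the full Strichartz norm $L^pH^1_r\cap L^\infty H^1$. Since $\|u\|_{L^pL^r}$ is finite, I partition $\mathbb{R}$ into finitely many subintervals $I_k=[T_k,T_{k+1}]$ on each of which $\|u\|_{L^p(I_k;L^r)}$ is as small as needed. On a single $I_k$ the inhomogeneous Strichartz estimate \eqref{eq:introStrin2} and the nonlinear bound \eqref{eq:holderb} yield
\begin{equation*}
  \|u\|_{L^p(I_k;H^1_r)\cap L^\infty(I_k;H^1)}
  \lesssim \|u(T_k)\|_{H^1}
  +\|u\|_{L^\infty L^r}^{\beta-p+1}
    \|u\|_{L^p(I_k;L^r)}^{p-2}
    \|u\|_{L^p(I_k;H^1_r)},
\end{equation*}
while $\|u\|_{L^\infty L^r}$ is bounded by energy conservation (B2) and the embedding $H^1\hookrightarrow L^r$. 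Because $\beta-p+1>0$ and $p-2>0$ when $\beta>5$, shrinking the subintervals lets me absorb the last factor into the left-hand side, producing a uniform bound on each $I_k$ that depends only on $\|u(T_k)\|_{H^1}$. Iterating over the finitely many pieces gives $u\in L^pH^1_r\cap L^\infty H^1$ globally.

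With this upgrade in hand I define the forward scattering state via $\phi_+:=\lim_{t\to+\infty}e^{itA}u(t)$ and verify the existence of the limit in $H^1$ by a Cauchy argument: for $t_2>t_1$ the Duhamel formula and the dual Strichartz estimate \eqref{eq:duaintrolstr} applied on $[t_1,t_2]$ give
\begin{equation*}
  \|e^{it_2A}u(t_2)-e^{it_1A}u(t_1)\|_{H^1}
  \lesssim \|F(u)\|_{L^{p'}([t_1,t_2];H^1_{r'})},
\end{equation*}
whose right-hand side vanishes as $t_1,t_2\to+\infty$ by \eqref{eq:holderb} and the global Strichartz bounds just obtained. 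The same inequality with $t_2=+\infty$ yields $\|u(t)-e^{-itA}\phi_+\|_{H^1}\to0$, and $\phi_-$ is constructed symmetrically.

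For the first small-data claim I set up a contraction argument on the closed ball
\begin{equation*}
  B=\bigl\{v:\ \|v\|_{L^\infty H^1\cap L^pH^1_r}\le 2C\|\phi\|_{H^1},\
  \|v\|_{L^pL^r}\le 2C\epsilon\bigr\}
\end{equation*}
applied to $\Psi(v)=e^{-itA}\phi+\Phi(v)$. The Strichartz estimates \eqref{eq:introStr}--\eqref{eq:introStrin2} together with \eqref{eq:holder} and \eqref{eq:holderb} make the nonlinear contributions to $\Psi(v)$ and $\Psi(v_1)-\Psi(v_2)$ pick up factors of the shape $M^{\beta-p+1}\epsilon^{p-2}$ or $M^{\beta-p+1}\epsilon^{p-1}$, so for $\epsilon(M)$ small enough $\Psi$ is a self-map of $B$ and a contraction, and its unique fixed point, which must coincide with $u(\phi)$ by uniqueness (Proposition~\ref{pro:globex}), lies in $L^pL^r$. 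For the second small-data claim, the 1D Gagliardo--Nirenberg inequality $\|v\|_{L^r}\lesssim\|v\|_{L^2}^{2/r}\|v_x\|_{L^2}^{(r-2)/r}$ combined with the hypotheses gives $\|u\|_{L^\infty L^r}\le C(M)\delta^{(r-2)/r}$, which can be made arbitrarily small. A continuity bootstrap on $f(T):=\|u\|_{L^p([0,T];L^r)}$, based on the Strichartz estimate
\begin{equation*}
  f(T)\le C\|\phi\|_{L^2}+C\|u\|_{L^\infty L^r}^{\beta-p+1}f(T)^{p-1},
\end{equation*}
then keeps $f(T)$ uniformly bounded on $[0,+\infty)$ provided $\delta$ is small enough, and the first part applies.

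The main obstacle is the opening bootstrap: the strict positivity of the exponents $\beta-p+1$ and $p-2$, guaranteed by $\beta>5$, is what allows the nonlinear contribution to be absorbed after a time subdivision, and the uniformity of the Strichartz constants on arbitrary time intervals and with respect to the translated operators $A_y$ (built into assumption B1) is what keeps this subdivision useful in spite of the non translation invariance of the equation. Once this upgrade is available, the rest reduces to standard Kato-type fixed-point and tail estimates.
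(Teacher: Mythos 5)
Your argument is correct and follows the same route as the paper: (i) a bootstrap that promotes $u\in L^pL^r$ to a bound in $L^pH^1_r\cap L^\infty H^1$ using Strichartz plus the H\"older estimate \eqref{eq:holderb} and smallness of the $L^pL^r$ norm on subintervals, (ii) a Cauchy-criterion definition of $\phi_\pm$ via the dual Strichartz estimate \eqref{eq:duaintrolstr}, and (iii) a small-data bootstrap on $\|u\|_{L^pL^r}$. The only cosmetic differences are that the paper absorbs the nonlinearity by passing to a single large translated time $t_0$ rather than a finite partition of $\mathbb{R}$, and proves the small-data claims by a continuity argument on $a(T)=\|u\|_{L^p_{[0,T]}L^r}$ rather than a contraction mapping, but the mechanism (strict positivity of the exponents $\beta-p+1$ and $p-2$ when $\beta>5$, plus interval-independent Strichartz constants from \textbf{B1}) is identical.
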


\begin{proof}
  Let $I=[0,+\infty)$, and $\Phi$ the map
  \begin{equation}\label{eq:mapphi}
    \textstyle
    \Phi(v)=-i\int_{0}^{t}e^{-i(t-s)A}F(v(s,x))ds,
    \qquad
    F(z)=|z|^{\beta-1}z
  \end{equation}
  Denote by $X_{I}$ the space
  $L^{p}_{I}H^{1}_{r}\cap L^{\infty}_{I}H^{1}$,
  with norm
  \begin{equation*}
    \|v\|_{X_{I}}=
    \|v\|_{L^{p}_{I}H^{1}_{r}\cap L^{\infty}_{I}H^{1}}.
  \end{equation*}
  By \eqref{eq:introStrin2} and
  \eqref{eq:holderb} we have, recalling that
  $H^{1}\hookrightarrow L^{r}$,
  \begin{equation}\label{eq:scatest}
    \|\Phi(v)\|_{X_{I}}\lesssim
    \|F(v)\|_{L^{p'}_{I}H^{1}_{r'}}\lesssim
    \|v\|_{L^{\infty}H^{1}}^{\beta-p+1}
    \|v\|_{L^{p}_{I}L^{r}}^{p-2}
    \|v\|_{L^{p}_{I}H^{1}_{r}}
  \end{equation}

  Now, let $u=u(\phi)$ and
  $w(t,x)=u(t+t_{0},x)$ for a fixed $t_{0}>0$. 
  Then $w$ satisfies
  \begin{equation*}
    iw_{t}=Aw+F(w),\qquad w(0)=u(t_{0})
  \end{equation*}
  that is
  \begin{equation*}
    \textstyle
    w=e^{-itA}u(t_{0})-i\int_{0}^{t}e^{-i(t-s)A}f(w)ds=
      e^{-itA}u(t_{0})+\Phi(w).
  \end{equation*}
  By \eqref{eq:scatest} we have  
  for some $c$ independent of $t_{0},\phi$
  \begin{equation*}
    \|w\|_{X_{I}}\le
    c\|e^{-itA}u(t_{0})\|_{X_{I}}+
    c\|w\|_{L^{\infty}H^{1}}^{\beta-p+1}
    \|w\|_{L^{p}_{I}L^{r}}^{p-2}
    \|w\|_{L^{p}_{I}H^{1}_{r}}
  \end{equation*}
  We have $\|w\|_{L^{\infty}H^{1}}\le C \|\phi\|_{H^{1}}$.
  Moreover, by assumption $u\in L^{p}L^{r}$, hence
  $\|u\|_{L^{p}_{[\tau,\infty)}L^{r}}\to0$ as $\tau\to+\infty$.
  Pick $t_{0}$ so large that
  \begin{equation*}
    \textstyle
    c
    (C \|\phi\|_{H^{1}})^{\beta-p+1}
    \|w\|_{L^{p}_{\mathbb{R}^{+}}L^{r}}^{p-2}
    \simeq
    \|u\|_{L^{p}_{[t_{0},\infty)}L^{r}}^{p-2}<\frac 12.
  \end{equation*}
  Absorbing one term at the right in the previous estimate,
  we get then
  \begin{equation*}
    \|w\|_{X_{I}}\le 2c\|e^{-itA}u(t_{0})\|_{X_{I}}
    \le 2c\|e^{-itA}u(t_{0})\|
    _{L^{p}H^{1}_{r}\cap L^{\infty}H^{1}}
    \lesssim\|\phi\|_{H^{1}}
  \end{equation*}
  by \eqref{eq:introStr} and the conservation laws.
  In particular we have proved that
  $u\in L^{p}_{I}H^{1}_{r}$ for $I=[t_{0},\infty)$ with $t_{0}$
  large enough. 

  Then by the dual Strichartz estimate \eqref{eq:duaintrolstr}
  and by \eqref{eq:holderb} we can write
  \begin{equation*}
    \textstyle
    \bigl\|
    \int_{T}^{+\infty}e^{isA}F(u(s))ds
    \bigr\|_{H^{1}}\lesssim
    \|F(u)\|_{L^{p'}_{[T,\infty)}H^{1}_{r'}}\lesssim
    \|u\|_{L^{\infty}H^{1}}^{\beta-p+1}
    \|u\|_{L^{p}_{[T,\infty)}L^{r}}^{p-2}
    \|u\|_{L^{p}H^{1}_{r}}
  \end{equation*}
  and we deduce that
  \begin{equation*}
    \textstyle
    \int_{T}^{+\infty}e^{isA}F(u(s))ds\to0
    \quad\text{in}\quad H^{1}
    \quad\text{as}\quad T\to +\infty.
  \end{equation*}
  This implies that the integral 
  $\int_{0}^{+\infty}e^{isA}F(u(s))ds$ converges in $H^{1}$
  and we may define
  \begin{equation*}
    \textstyle
    \phi_{+}=\phi-i\int_{0}^{+\infty}e^{isA}F(u(s))ds.
  \end{equation*}
  We obtain
  \begin{equation*}
    \textstyle
    u(t)-e^{-itA}\phi_{+}=
    -ie^{-itA}\int_{t}^{+\infty}e^{isA} F(u(s))ds\to0
  \end{equation*}
  as $t\to+\infty$, since $e^{-itA}$ is uniformly bounded
  in $H^{1}$. This proves scattering at $+\infty$,
  and the proof for negative times is the same.
 
  It remains to prove small data scattering. 
  The solution $u=u(\phi)$ satisfies
  \begin{equation*}
    u=e^{-itA}\phi+\Phi(u).
  \end{equation*}
  By \eqref{eq:holder} and \eqref{eq:introStrin}
  we get on the time interval $J=[0,T]$
  \begin{equation*}
    \|u\|_{L^{p}_{J}L^{r}}\lesssim
    \|e^{-itA}\phi\|_{L^{p}_{J}L^{r}}+
    \|u\|_{L^{\infty}L^{r}}^{\beta-p+1}
    \|u\|_{L^{p}_{J}L^{r}}^{p-1}.
  \end{equation*}
  Since
  $\|u\|_{L^{r}}\le\|u\|_{L^{2}}^{2/r}\|u\|_{L^{\infty}}^{1-2/r}
    \lesssim \|u\|_{L^{2}}^{2/r}\|u\|_{\dot H^{1}}^{1-2/r}$
  we obtain for some $\theta_{1},\theta_{2}>0$
  \begin{equation*}
    \|u\|_{L^{p}_{J}L^{r}}\lesssim
    \|e^{-itA}\phi\|_{L^{p}_{J}L^{r}}+
    \|u\|_{L^{\infty}L^{2}}^{\theta_{1}}
    \|u\|_{L^{\infty}\dot H^{1}}^{\theta_{2}}
    \|u\|_{L^{p}_{J}L^{r}}^{p-1}
  \end{equation*}
  If $\|e^{-itA}\phi\|_{L^{p}L^{r}}<\epsilon$,
  the continuous function $a(T)=\|u\|_{L^{p}_{[0,T]}L^{r}}$
  satisfies the estimate
  \begin{equation}\label{eq:continuity}
    a(T)\lesssim \epsilon+a(T)^{p-1}
  \end{equation}
  with an implicit constant depending only on
  $\|\phi\|_{H^{1}}$. If $\|\phi\|_{H^{1}}<M$
  and $\epsilon$ is sufficiently small w.r.to $M$,
  by a continuity argument we deduce from \eqref{eq:continuity}
  that $a(T)$ is bounded for all times, and hence $u$
  scatters by the first part of the proof.
  Finally, consider the last claim:
  if $\|\phi\|_{L^{2}}<M$ and
  $\|u\|_{L^{\infty}\dot H^{1}}<\delta$, 
  by the Strichartz estimate we have
  $\|e^{-itA}\phi\|_{L^{p}L^{r}}\lesssim M$ and
  the function $a(T)$ satisfies the estimate
  \begin{equation*}
    a(T)\lesssim M+\delta a(T)^{p-1}
  \end{equation*}
  again with an implicit constant depending only on $M$;
  if $\delta$ is sufficiently small w.r.to $M$,
  we conclude also in this case by a continuity argument.
\end{proof}

\begin{proposition}[Nonlinear perturbation]\label{pro:NLpert}
  Assume \textbf{B1--B2}.
  For any $M>0$ there exist $\epsilon=\epsilon(M)$
  such that the following holds.
  Let $\phi\in H^{1}$,
  $\|u\|_{L^{\infty}H^{1}}$, $\|v\|_{L^{\infty}H^{1}}$,
  $\|\rho\|_{L^{\infty}H^{1}\cap L^{p}L^{r}}$, 
  $\|\sigma\|_{L^{\infty}H^{1}\cap L^{p}L^{r}}\le M$.
  Assume $\rho-\sigma=\tau_{1}+\tau_{2}$ with
  $\|\tau_{1}\|_{L^{p}L^{r}}\le \epsilon$,
  $\|\tau_{2}\|_{L^{\infty}L^{r}}\le \epsilon$.
  Let $u$ be a solution of
  \begin{equation*}
    \textstyle
    u=e^{-itA}\phi
   -i\int_{0}^{t}
    e^{-i(t-s)A}F(u)ds
    + \rho(t,x)
  \end{equation*}
  and $v$ a solution of
  \begin{equation*}
    \textstyle
    v=e^{-itA}\phi
    -i\int_{0}^{t}
    e^{-i(t-s)A}F(v)ds
    + \sigma(t,x).
  \end{equation*}
  Then if $u(t,x)\in L^{p}L^{r}$
  we have also $v(t,x)\in L^{p}L^{r}$.
\end{proposition}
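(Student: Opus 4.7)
The plan is to set $w=v-u$, which satisfies the integral equation
\[
w=-i\int_{0}^{t}e^{-i(t-s)A}\bigl[F(v)-F(u)\bigr]\,ds+\tau_{1}+\tau_{2},
\]
and to show $w\in L^{p}L^{r}$ by a Strichartz-based perturbation argument run piecewise in time. Since $u\in L^{p}L^{r}$, for a small parameter $\eta>0$ (to be chosen) I can split $[0,\infty)=\bigcup_{j=1}^{N}I_{j}$ into finitely many consecutive subintervals $I_{j}=[t_{j-1},t_{j}]$ with $\|u\|_{L^{p}_{I_{j}}L^{r}}\le\eta$, where $N$ depends only on $\|u\|_{L^{p}L^{r}}$ and $\eta$. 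On each $I_{j}$ I split $\int_{0}^{t}=\int_{0}^{t_{j-1}}+\int_{t_{j-1}}^{t}$ and apply the Strichartz estimate \textbf{B1} starting from $t_{j-1}$, which yields a bound of the form
\[
\|w\|_{L^{p}_{I_{j}}L^{r}\cap L^{\infty}_{I_{j}}L^{2}}\lesssim \|w(t_{j-1})\|_{L^{2}}+\|F(v)-F(u)\|_{L^{p'}_{I_{j}}L^{r'}}+\|\tau_{1}+\tau_{2}\|_{L^{p}_{I_{j}}L^{r}}+\text{boundary terms}.
\]

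For the nonlinear term, the pointwise estimate $|F(v)-F(u)|\lesssim (|u|^{\beta-1}+|v|^{\beta-1})|w|$ and the Hölder computation leading to \eqref{eq:holder} give
\[
\|F(v)-F(u)\|_{L^{p'}_{I_{j}}L^{r'}}\lesssim (\|u\|_{L^{\infty}L^{r}}+\|v\|_{L^{\infty}L^{r}})^{\beta-p+1}(\|u\|_{L^{p}_{I_{j}}L^{r}}+\|v\|_{L^{p}_{I_{j}}L^{r}})^{p-2}\|w\|_{L^{p}_{I_{j}}L^{r}},
\]
and a continuity (bootstrap) argument in $T\in I_{j}$ promotes $\|v\|_{L^{p}_{I_{j}}L^{r}}\lesssim\eta$ from any short initial subinterval. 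With $\eta$ small relative to $M$, the factor $\eta^{p-2}$ makes the nonlinear term absorbable.

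The main obstacle is $\tau_{2}$, which is small only in $L^{\infty}L^{r}$, a norm that does not fit directly into the Strichartz framework. The key observation is that $\tau_{2}=\rho-\sigma-\tau_{1}$ is automatically bounded in $L^{p}L^{r}$ by $2M+\epsilon$. Interpolation then gives
\[
\|\tau_{2}\|_{L^{q}L^{r}}\le\|\tau_{2}\|_{L^{\infty}L^{r}}^{1-p/q}\|\tau_{2}\|_{L^{p}L^{r}}^{p/q}\lesssim\epsilon^{1-p/q}M^{p/q}
\quad\text{for every }q>p,
\]
which is a \emph{genuine smallness} estimate. One exploits it by splitting $F(v)-F(u)=[F(v)-F(v-\tau_{2})]+[F(v-\tau_{2})-F(u)]$: the first piece is estimated by rearranging the Hölder split in $\|\,|v|^{\beta-1}\tau_{2}\|_{L^{p'}L^{r'}}$ so that $\tau_{2}$ lands in a high $L^{q}L^{r}$ with $q>p$ (balanced against $v$ in an interpolated norm $L^{(\beta-1)q_{1}}L^{r}\subset L^{p}L^{r}\cap L^{\infty}L^{r}$), yielding smallness of size $\epsilon^{\alpha}$ for some $\alpha>0$; the second piece absorbs naturally into the estimate for $w-\tau_{2}$.

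Assembling these pieces on $I_{j}$ gives $\|w\|_{L^{p}_{I_{j}}L^{r}\cap L^{\infty}_{I_{j}}L^{2}}\le C_{j}(\|w(t_{j-1})\|_{L^{2}}+\epsilon^{\alpha})$. Inducting on $j=1,\dots,N$ with $w(0)=\tau_{1}(0)+\tau_{2}(0)$ bounded, the total bound accumulates to a constant depending only on $M$, $N$, and the Strichartz constant. This gives $\|v\|_{L^{p}L^{r}}\le\|u\|_{L^{p}L^{r}}+\|w\|_{L^{p}L^{r}}<\infty$, as required.
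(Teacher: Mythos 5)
Your strategy of controlling $w=v-u$ has a structural problem that prevents the bootstrap from closing, and it is precisely the problem the paper's choice of unknown is designed to avoid. The Duhamel relation for $w$ reads
\begin{equation*}
  w=\Phi(v)-\Phi(u)-\tau_{1}-\tau_{2},
\end{equation*}
so $\tau_{1}+\tau_{2}$ sits \emph{outside} the integral and contributes $\|\tau_{1}+\tau_{2}\|_{L^{p}_{I_{j}}L^{r}}$ directly to the right side of your Strichartz estimate. Since $\tau_{2}$ is only bounded (of size $M$) in $L^{p}L^{r}$, this term is $O(M)$, not $O(\epsilon^{\alpha})$; your own displayed estimate acknowledges it, but your summary line $\|w\|_{L^{p}_{I_{j}}L^{r}\cap L^{\infty}_{I_{j}}L^{2}}\le C_{j}(\|w(t_{j-1})\|_{L^{2}}+\epsilon^{\alpha})$ silently drops it. Consequently $w$ cannot be made small in $L^{p}_{I_{j}}L^{r}$, and the intermediate claim ``a continuity argument promotes $\|v\|_{L^{p}_{I_{j}}L^{r}}\lesssim\eta$'' is false, since $\|v\|_{L^{p}_{I_{j}}L^{r}}\le\eta+\|w\|_{L^{p}_{I_{j}}L^{r}}$ is then itself only $O(M)$. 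Feeding $\|v\|_{L^{p}_{I_{j}}L^{r}}\sim M$ into your bound $\|F(v)-F(u)\|_{L^{p'}L^{r'}}\lesssim(\|u\|_{L^{\infty}L^{r}}+\|v\|_{L^{\infty}L^{r}})^{\beta-p+1}(\|u\|_{L^{p}_{I_{j}}L^{r}}+\|v\|_{L^{p}_{I_{j}}L^{r}})^{p-2}\|w\|_{L^{p}_{I_{j}}L^{r}}$ produces a term of order $M^{\beta}$, which is not absorbable; the nonlinear inequality $a\le CM+K(\eta+a)^{p-2}a$ with $K\sim M^{\beta-p+1}$ has no small fixed point when $M$ is large, so the interval-by-interval induction does not close.

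The paper avoids this by taking $h=v-u-\sigma+\rho=w+\tau_{1}+\tau_{2}$ as the unknown. Then $h=\Phi(u+h+\sigma-\rho)-\Phi(u)$ has \emph{no} additive $\tau$ term and $h(0)=0$; the pointwise bound
\begin{equation*}
  |F(u+h+\sigma-\rho)-F(u)|\lesssim |h|^{\beta}+|\tau_{1}|^{\beta}+|\tau_{2}|^{\beta}+|u|^{\beta-1}|h|+|u|^{\beta-1}|\tau_{1}|+|u|^{\beta-1}|\tau_{2}|
\end{equation*}
separates the $h$-carrying terms (giving $c_{0}\|u\|_{L^{p}_{[0,t]}L^{r}}^{p-2}a(t)$ and $c_{0}a(t)^{p-1}$, absorbable once $a$ is small) from the purely $\tau$ terms (all $O(\epsilon^{\theta})$), yielding $a(t)\le\epsilon+c_{0}\|u\|_{L^{p}_{[0,t]}L^{r}}^{p-2}a(t)+c_{0}a(t)^{p-1}$ with $a(0)=0$. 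This is exactly what Lemma~\ref{lem:NLest} and the interval decomposition are built to handle, and the conclusion $\|h\|_{L^{p}L^{r}}\lesssim\epsilon$ then gives $v=u+h+\sigma-\rho\in L^{p}L^{r}$. Your interpolation observation $\|\tau_{2}\|_{L^{q}L^{r}}\lesssim\epsilon^{1-p/q}M^{p/q}$ for $q>p$ is correct and is in fact a clean alternative to the paper's H\"older split (which exploits $\beta p'>p$) for estimating the mixed term $\||u|^{\beta-1}\tau_{2}\|_{L^{p'}L^{r'}}$; but that only helps the terms inside the Duhamel integral and cannot repair the external $\tau_{2}$ term that breaks your bootstrap.
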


\begin{proof}
  Using again the notation \eqref{eq:mapphi},
  the functions $u,v$ satisfy
  \begin{equation*}
    u=e^{-itA}\phi+\Phi(u)+\rho,\qquad
    v=e^{-itA}\phi+\Phi(v)+\sigma
  \end{equation*}
  and the difference $h(t,x)=v-u-\sigma+\rho$ satisfies
  \begin{equation*}
    \textstyle
    h=\Phi(u+h+\sigma-\rho)-\Phi(u).
  \end{equation*}
  By \eqref{eq:introStrin} this implies,
  on a time interval $J=[0,T]$,
  \begin{equation*}
    \|h\|_{L^{p}_{J}L^{r}}
    \le
    c(p)\|F(u+h+\sigma-\rho)-F(u)\|_{L^{p'}_{J}L^{r'}}
  \end{equation*}
  Writing
  \begin{equation*}
    \textstyle
    F(u+h+\sigma-\rho)-F(u)=
    \beta\int_{0}^{1}|u+t(h+\sigma-\rho)|^{\beta-1}dt \cdot h
  \end{equation*}
  and recalling that $\rho-\sigma=\tau_{1}+\tau_{2}$,
  we get the pointwise inequality
  \begin{equation*}
    |F(u+h+\sigma-\rho)-F(u)|\lesssim_{\beta}
    |h|^{\beta}+|\tau_{1}|^{\beta}+|\tau_{2}|^{\beta}+
    |u|^{\beta-1}|h|+
    |u|^{\beta-1}|\tau_{1}|+
    |u|^{\beta-1}|\tau_{2}|.
  \end{equation*}
  Using \eqref{eq:holder} we have
  \begin{equation*}
    \||h|^{\beta}\|_{L^{p'}_{J}L^{r'}}
    \le
    \|h\|_{L^{\infty}L^{r}}^{\beta-p+1}
    \|h\|_{L^{p}_{J}L^{r}}^{p-1}
    \lesssim_{M}\|h\|_{L^{p}_{J}L^{r}}^{p-1}.
  \end{equation*}
  By a similar computation we get
  \begin{equation*}
    \||u|^{\beta-1}h\|_{L^{p'}_{J}L^{r'}}\le
    \|u\|_{L^{\infty}L^{r}}^{\beta-p+1}
    \|u\|_{L^{p}_{J}L^{r}}^{p-2}
    \|h\|_{L^{p}_{J}L^{r}}
    \lesssim_{M}
    \|u\|_{L^{p}_{J}L^{r}}^{p-2}
    \|h\|_{L^{p}_{J}L^{r}}.
  \end{equation*}
  Next we can write, as above,
  \begin{equation*}
    \||\tau_{1}|^{\beta}\|_{L^{p'}_{J}L^{r'}}
    \le
    \|\tau_{1}\|_{L^{\infty}L^{r}}^{\beta-p+1}
    \|\tau_{1}\|_{L^{p}L^{r}}^{p-1}
    \lesssim_{M}\epsilon^{p-1}
  \end{equation*}
  by the assumption $\|\tau_{1}\|_{L^{p}L^{r}}\le \epsilon$,
  while
  \begin{equation*}
    \||\tau_{2}|^{\beta}\|_{L^{p'}_{J}L^{r'}}
    \le
    \|\tau_{2}\|_{L^{\infty}L^{r}}^{\beta-p+1}
    \|\tau_{2}\|_{L^{p}L^{r}}^{p-1}
    \lesssim_{M}\epsilon^{\beta-p+1}
  \end{equation*}
  by the assumption 
  $\|\tau_{2}\|_{L^{\infty}L^{r}}\le \epsilon$.

    Finally, since $\beta p'>p$, with $\delta=\beta p'-p$,  we can show that
    (we provide the detail of this computation later)
  \begin{equation*}
	\||u|^{\beta-1}\tau_{j}\|_{L^{p'}L^{r'}}
    \le
    \|u\|_{L^{p} L^{r}}^{\frac{\beta-1}{\beta}\frac{p}{p'}}
    \|u\|_{L^{\infty} L^{r}}^{\delta \frac{\beta-1}{\beta}\frac{p}{p'}}
    \|\tau_{j}\|_{L^{p} L^{r}}^{\frac 1 \beta \frac{p}{p'}}
    \|\tau_{j}\|_{L^{\infty} L^{r}}^{\delta \frac{p}{p'}}
  \end{equation*}
  
  which implies
  \begin{equation*}
    \||u|^{\beta-1}\tau_{1}\|_{L^{p'}L^{r'}}
    \lesssim_{M}\epsilon^{\frac 1 \beta \frac{p}{p'}},
    \qquad
    \||u|^{\beta-1}\tau_{2}\|_{L^{p'}L^{r'}}
    \lesssim_{M}\epsilon^{\delta \frac{p}{p'}}.
  \end{equation*}

  Thus the continuous, nondecreasing function
  \begin{equation}\label{eq:aoft}
    a(t)=\|h\|_{L^{p}_{[0,t]}L^{r}}
  \end{equation}
  satisfies an inequality of the form
  \begin{equation*}
    a(t)\le c_{0}(M)\epsilon^{\theta}+
    c_{0}(M)
    \|u\|_{L^{p}_{[0,t]}L^{r}}^{p-2} a(t)+
    c_{0}(M)
    a(t)^{\gamma},
    \qquad
    \gamma=p-1>1
  \end{equation*}
  for a suitable $\theta>0$. Changing name to the small
  parameter $\epsilon$, we can write this
  inequality in the form
  \begin{equation}\label{eq:nlineq}
    a(t)\le \epsilon+
    c_{0}(M)
    \|u\|_{L^{p}_{[0,t]}L^{r}}^{p-2} a(t)+
    c_{0}(M)
    a(t)^{\gamma},
    \qquad
    \gamma=p-1>1
  \end{equation}
  At this point, we need a simple nonlinear lemma:

  \begin{lemma}\label{lem:NLest}
    Let $\gamma>1$, $K\ge 1$, $\eta\ge0$ and let $E$ be the set
    \begin{equation*}
      E=\{x\in \mathbb{R}:0\le x\le \eta+Kx^{\gamma}\}.
    \end{equation*}
    If 
    $\eta<\overline{\eta}(K):=
      \frac{\gamma-1}{\gamma}(K \gamma)^{\frac{1}{1-\gamma}}$
    then $E$ is made of two disjoint intervals
    $[0,x_{\gamma}]$ and $[x'_{\gamma},+\infty)$,
    with $\eta<x_{\gamma}<x'_{\gamma}$.
    We have then $x_{\gamma}=\eta+o(\eta)$ as $\eta\to0$,
    and $x_{\gamma}\le \frac{\gamma}{\gamma-1}\eta$.
  \end{lemma}

  \begin{proof}
    In the $(x,y)$--plane, consider the graph of the function
    $g(x)=\eta+Kx^{\gamma}$ and the straight line $y=x$.
    The graph of $g$ has only one point of slope 1, 
    located at $x=x_{0}=(K \gamma)^{\frac{1}{1-\gamma}}$.
    This point lies on the straight line $y=x$ if and only
    if $x_{0}=\eta+Kx_{0}^{\gamma}$; in this case,
    by convexity, the two lines are tangent to each other and
    $\eta$ must be equal to 
    $\overline{\eta}
      =\frac{\gamma-1}{\gamma}(K \gamma)^{\frac{1}{1-\gamma}}$.
    If $\eta<\overline{\eta}$ then the two lines intersect
    precisely at two points, located at $x=x_{\gamma}$ and
    $x=x'_{\gamma}$ respectively, with 
    $\eta< x_{\gamma}<x'_{\gamma}$.
    By the implicit function theorem we see that the map
    $\eta \mapsto x_{\gamma}$ is $C^{1}$, and its 
    derivative at 0 is equal to 1, implying the asymptotic 
    $x_{\gamma}=\eta+o(\eta)$ as $\eta\to0$.
    Next, consider the chord joining the points
    $(0,\eta)$ and $(x_{0},g(x_{0}))$ of the graph of $g$;
    it intersects the line $y=x$ at $(x_{1},x_{1})$ given
    by $x_{1}=\eta+Kx_{0}^{\gamma-1}x_{1}$, that is
    $x_{1}=\frac{\gamma}{\gamma-1}\eta$. Since
    the chord is above the graph of $g$, which intersects the 
    same line at $(x_{\gamma},x_{\gamma})$, we conclude that
    $x_{\gamma}<x_{1}=\frac{\gamma}{\gamma-1}\eta$.
  \end{proof}

  As a Corollary to the Lemma, we see that if $a:I\to \mathbb{R}$
  is a continuous function on an interval $I=[0,T)$ with
  $T\le \infty$, such that
  \begin{equation*}
    a(0)\le \eta,\qquad
    0\le a(t)\le \eta+K a(t)^{\gamma}
    \quad\text{for}\quad t\in I
  \end{equation*}
  with $\eta<\overline{\eta}$ as in the Lemma, then we must have
  \begin{equation*}
    a(t)\le \gamma'\eta
    \quad\text{on}\quad I,
    \qquad
    \gamma'=\frac{\gamma}{\gamma-1}.
  \end{equation*}
  Indeed, the image $a(I)$ is an interval,
  $a(I)\subseteq E$ and $a(0)\in [0,x_{\gamma}]$, hence 
  $a(I)\subseteq[0,x_{\gamma}]$.

  We apply the Lemma to the function \eqref{eq:aoft}.
  Since $u\in L^{p}L^{r}$ by assumption, we can find
  times $t_{0}=0<t_{1}<\dots t_{N}$ such that
  for $j=1,\dots,N$
  \begin{equation*}
    \textstyle
    c_{0}(M)\|u\|_{L^{p}_{[t_{j-1},t_{j}]}}^{p-2}\le \frac 12,
    \qquad
    c_{0}(M)\|u\|_{L^{p}_{[t_{N},+\infty)}}^{p-2}\le \frac 12.
  \end{equation*}
  Thus on the first interval we have
  \begin{equation*}
    \textstyle
    a(t)\le \epsilon+\frac 12a(t)+c_{0}(M)a(t)^{\gamma},
    \qquad
    t\in[0,t_{1}].
  \end{equation*}
  Absorbing one term at the left we get
  \begin{equation*}
    a(t)\le 2 \epsilon+2c_{0}(M)a(t)^{\gamma}
    \qquad
    t\in[0,t_{1}]
  \end{equation*}
  and if $2\epsilon<\overline{\eta}(2c_{0})$ i.e.~it
  is sufficiently small w.r.to $M$, we can
  apply Lemma \ref{lem:NLest} and we conclude
  \begin{equation}\label{eq:first}
    a(t)\le \gamma'\epsilon,
    \qquad t\in[0,t_{1}].
  \end{equation}
  On the second interval we have by \eqref{eq:nlineq}
  (recall $a(t)$ is nondecreasing)
  \begin{equation*}
    \textstyle
    a(t)\le \epsilon+
    \frac 12 a(t_{1})+\frac 12 a(t)+c_{0}a^{\gamma},
    \qquad
    t\in[t_{1},t_{2}]
  \end{equation*}
  and using \eqref{eq:first} we get
  \begin{equation*}
    a(t)\le (2+\gamma')\epsilon+c_{0}a^{\gamma},
    \qquad
    t\in[t_{1},t_{2}].
  \end{equation*}
  By possibly decreasing $\epsilon$ i.e.~taking
  $(2+\gamma')\epsilon<\overline{\eta}(2c_{0})$,
  we can apply the Lemma again and we obtain
  \begin{equation}\label{eq:second}
    a(t)\le \gamma'\epsilon
    \qquad\text{on}\qquad [t_{1},t_{2}].
  \end{equation}
  On the third interval we get
  \begin{equation*}
    \textstyle
    a(t)\le \epsilon+\frac 12a(t_{1})+\frac 12a(t_{2})+
    \frac 12a(t)c_{0}a^{\gamma},
    \qquad
    t\in[t_{2},t_{3}]
  \end{equation*}
  which implies
  \begin{equation*}
    \textstyle
    a(t)\le (2+2 \gamma')\epsilon+c_{0}a^{\gamma},
    \qquad
    t\in[t_{2},t_{3}]
  \end{equation*}
  We repeat the argument on each interval and in a finite
  number of steps we arrive at
  \begin{equation*}
    a(t)\le \gamma'\epsilon
    \qquad\text{on}\qquad [t_{N-1},t_{N}]
  \end{equation*}
  provided $(2+(N-1)\gamma')\epsilon<\overline{\eta}(2c_{0})$.
  One last application of the Lemma gives
  \begin{equation*}
    a(t)\le \gamma'\epsilon
    \qquad\text{on}\qquad [t_{N},+\infty)
  \end{equation*}
  and so we have proved that if $\epsilon$ is sufficiently
  small w.r.to $M$, i.e.
  $(2+N \gamma')\epsilon<\overline{\eta}(2c_{0})$, we have
  \begin{equation}\label{eq:last}
    a(t)\le \gamma'\epsilon
    \qquad\text{for all}\qquad t\ge0.
  \end{equation}
  Since $\gamma'=\frac{p-1}{p-2}<6$ this proves the claim.
\end{proof}

The last general result we need on the nonlinear problem is
the existence of the wave operator. Note that the
following Proposition applies in particular to the 
Laplace operator $A=-\partial_{x}^{2}$ as a special case:

\begin{proposition} \label{pro:waveopA}
  For any $\phi_{+}\in H^{1}$ there exists
  $\phi\in H^{1}$ such that
  the solution $u(t,x)$ of
  $iu_{t}-Au=|u|^{\beta-1}u$ with data $u(0)=\phi$ satisfies
  \begin{equation*}
    \|u(t)-e^{-itA}\phi_{+}\|_{H^{1}}\to0
    \qquad\text{as}\qquad 
    t\to+\infty,
  \end{equation*}
  and belongs to $L^{p}_{t\ge0}L^{r}$.
  A similar result holds for $t\to-\infty$.
\end{proposition}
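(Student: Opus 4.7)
The plan is to construct $u$ by solving the ``backwards'' integral equation
\begin{equation*}
  u(t)=e^{-itA}\phi_{+}+i\int_{t}^{+\infty}e^{-i(t-s)A}F(u(s))\,ds,
  \qquad F(z)=|z|^{\beta-1}z,
\end{equation*}
on an interval $[T,+\infty)$ with $T$ large via fixed point, and then extending the solution backwards to $t=0$ using the global well-posedness in Proposition~\ref{pro:globex}.

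\textbf{Step 1 (fixed point on $[T,+\infty)$).} By the homogeneous Strichartz estimate \eqref{eq:introStr} we have $e^{-itA}\phi_{+}\in L^{p}H^{1}_{r}\cap L^{\infty}H^{1}$; in particular
\begin{equation*}
  \|e^{-itA}\phi_{+}\|_{L^{p}_{[T,\infty)}L^{r}}\longrightarrow 0
  \quad\text{as}\quad T\to+\infty
\end{equation*}
by absolute continuity. Working in $X_{T}=L^{p}_{[T,\infty)}H^{1}_{r}\cap L^{\infty}_{[T,\infty)}H^{1}$, the inhomogeneous Strichartz estimates \eqref{eq:introStrin2}, \eqref{eq:duaintrolstr} (applied to the integral from $t$ to $+\infty$, which is legitimate since $A$ is time-independent and the estimates are valid on arbitrary intervals by \textbf{B1}) together with the H\"older bound \eqref{eq:holderb} give
\begin{equation*}
  \|\Psi(v)\|_{X_{T}}\lesssim \|\phi_{+}\|_{H^{1}}+
  \|v\|_{L^{\infty}H^{1}}^{\beta-p+1}
  \|v\|_{L^{p}_{[T,\infty)}L^{r}}^{p-2}
  \|v\|_{L^{p}_{[T,\infty)}H^{1}_{r}},
\end{equation*}
and an analogous Lipschitz bound on differences, for $\Psi(v)(t)=e^{-itA}\phi_{+}+i\int_{t}^{+\infty}e^{-i(t-s)A}F(v(s))\,ds$. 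Since the smallness factor $\|v\|_{L^{p}L^{r}}^{p-2}$ has a strictly positive exponent, the closed ball in $X_{T}$ of radius $2c\|\phi_{+}\|_{H^{1}}$ intersected with $\{\|v\|_{L^{p}_{[T,\infty)}L^{r}}\le 2c\|e^{-itA}\phi_{+}\|_{L^{p}_{[T,\infty)}L^{r}}\}$ is mapped into itself and $\Psi$ is a contraction, provided $T$ is chosen large enough depending on $\|\phi_{+}\|_{H^{1}}$. This produces a unique fixed point $u\in X_{T}$.

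\textbf{Step 2 (scattering at $+\infty$).} Rewriting the integral equation as
\begin{equation*}
  u(t)-e^{-itA}\phi_{+}=ie^{-itA}\int_{t}^{+\infty}e^{isA}F(u(s))\,ds,
\end{equation*}
the dual Strichartz estimate \eqref{eq:duaintrolstr} combined with the uniform boundedness of $e^{-itA}$ on $H^{1}$ and \eqref{eq:holderb} yields
\begin{equation*}
  \|u(t)-e^{-itA}\phi_{+}\|_{H^{1}}\lesssim
  \|F(u)\|_{L^{p'}_{[t,\infty)}H^{1}_{r'}}\lesssim
  \|u\|_{L^{\infty}H^{1}}^{\beta-p+1}
  \|u\|_{L^{p}_{[t,\infty)}L^{r}}^{p-2}
  \|u\|_{L^{p}H^{1}_{r}},
\end{equation*}
which tends to $0$ as $t\to+\infty$ since $u\in L^{p}_{[T,\infty)}L^{r}$.

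\textbf{Step 3 (extension to $t=0$ and conclusion).} Since $u(T)\in H^{1}$, Proposition~\ref{pro:globex} gives a unique global solution $\widetilde u\in C(\mathbb{R};H^{1})$ of \eqref{eq:inteq} with data $u(T)$ at time $T$; by uniqueness $\widetilde u\equiv u$ on $[T,+\infty)$. Set $\phi=\widetilde u(0)\in H^{1}$. On $[0,T]$ we have $\widetilde u\in C([0,T];H^{1})\subset L^{\infty}_{[0,T]}L^{r}\subset L^{p}_{[0,T]}L^{r}$, while on $[T,+\infty)$ membership in $L^{p}L^{r}$ is part of the fixed point. The case $t\to-\infty$ is identical, solving forward from $-\infty$.

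\textbf{Main obstacle.} The only real technical point is ensuring that the inhomogeneous Strichartz bounds for the ``from $+\infty$'' integral close the fixed point in $X_{T}$; this reduces to exploiting the smallness of $\|e^{-itA}\phi_{+}\|_{L^{p}_{[T,\infty)}L^{r}}$ for $T$ large, exactly in the same spirit as in the proof of Proposition~\ref{pro:smallscatt}, except that here the initial condition is prescribed at $t=+\infty$ rather than at $t=0$.
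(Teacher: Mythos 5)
Your overall strategy is the same as the paper's: solve the backward integral equation $u=e^{-itA}\phi_{+}+i\int_{t}^{\infty}e^{-i(t-s)A}F(u)\,ds$ by a fixed point on $[T,\infty)$ with $T$ large, then extend to $t=0$ via Proposition~\ref{pro:globex}. But the implementation differs in two ways, and one of them hides a real issue.

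The paper works in $X_{I}=L^{p}_{I}H^{1}_{r}\cap L^{a}_{I}H^{1}_{b}$, uses the H\"older estimate \eqref{eq:holderc} (which has $L^{a}H^{1}_{b}$ factors), and takes $B_{\epsilon}$ to be a ball of small radius $\epsilon$ in the \emph{full} $X_{I}$ norm --- this is possible because $X_{I}$ contains no $L^{\infty}_{t}$ component, so $\|e^{-itA}\phi_{+}\|_{X_{I}}\to 0$ as $T\to\infty$. You instead use $X_{T}=L^{p}_{I}H^{1}_{r}\cap L^{\infty}_{I}H^{1}$ with estimate \eqref{eq:holderb}; since the $L^{\infty}H^{1}$ part does not shrink, you are forced to keep the ball of size $\sim\|\phi_{+}\|_{H^{1}}$ and cut it down by the auxiliary constraint $\|v\|_{L^{p}_{I}L^{r}}\lesssim\|e^{-itA}\phi_{+}\|_{L^{p}_{I}L^{r}}$. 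That part is legitimate (and is exactly what is done in Proposition~\ref{pro:smallscatt} at the level of \emph{a priori} bounds), but it makes the invariant set a two-constraint object and the bookkeeping heavier.

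The step you cannot get for free is the phrase ``an analogous Lipschitz bound on differences.'' Estimate \eqref{eq:holderb} bounds $\|F(u)\|_{L^{p'}H^{1}_{r'}}$; to contract in $X_{T}$ you would need $\|F(v_{1})-F(v_{2})\|_{L^{p'}H^{1}_{r'}}$, and the derivative produces the extra term $(F'(v_{1})-F'(v_{2}))\partial_{x}v_{2}$, which is \emph{not} handled by any estimate stated in the paper. It can be handled when $\beta>5$ (use $|F'(v_{1})-F'(v_{2})|\lesssim(|v_{1}|+|v_{2}|)^{\beta-2}|v_{1}-v_{2}|$ and a fresh H\"older splitting in time, where you now need $p-3>0$), but it is a different computation, not an ``analogous'' one, and you must carry it out. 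The paper avoids this entirely by the standard Cazenave--Kato device: it shows $\Psi$ maps $B_{\epsilon}$ to itself in the strong $X_{I}$ norm, but proves the contraction only in the \emph{weaker} $L^{p}_{I}L^{r}$ metric, noting that $B_{\epsilon}$ is complete for that metric by weak compactness. For the contraction in $L^{p}L^{r}$ only the coarse pointwise bound $|F(v_{1})-F(v_{2})|\lesssim(|v_{1}|+|v_{2}|)^{\beta-1}|v_{1}-v_{2}|$ is needed --- no derivative of the difference, hence no extra regularity of $F$. This is the cleaner route and is worth learning; your route also works here because $\beta>5$, but only if you actually prove the derivative-difference estimate rather than assert it. On the plus side, your Step~2 (the explicit $H^{1}$-convergence $u(t)-e^{-itA}\phi_{+}\to0$) spells out something the paper leaves implicit, and your Step~3 (backward continuation via Proposition~\ref{pro:globex}) matches the paper exactly.
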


\begin{proof}
  The proof is standard but we sketch it for the sake
  of completeness. We denote by $I$ the interval
  $I=[T,+\infty)$, with $T>0$ to be chosen,
  and we introduce the space
  \begin{equation*}
    X_{I}=L^{p}_{I}H^{1}_{r}\cap L^{a}_{I}H^{1}_{b}
  \end{equation*} 
  with its natural norm,
  and the closed ball of $X_{I}$
  \begin{equation*}
    B_{\epsilon}=\{v\in X_{I}:
    \|v\|_{X_{I}}\le \epsilon
    \}
  \end{equation*}
  with $\epsilon>0$ to be chosen.
  For $t\ge T$ and $v\in B_{\epsilon}$ we consider the map
  \begin{equation*}
    \textstyle
    \Psi(v)=
    e^{-it A}\phi_{+}+i\int_{t}^{+\infty}e^{-i(t-s)A}f(v)ds
  \end{equation*}
  for $t\ge T$. 
  Note that the Strichartz estimates
  \eqref{eq:introStrin}, \eqref{eq:duaintrolstr} are valid also
  for the integrals $\int_{t}^{+\infty}$
  (as it follows easily using \eqref{eq:duaintrolstr}).
  Thus we obtain
  \begin{equation*}
    \|\Psi(v)\|_{X_{I}}
    \lesssim
    \|e^{-itA}\phi_{+}\|_{X_{I}}+
    \|f(v)\|_{L^{p'}_{I}H^{1}_{r'}}
  \end{equation*}
  and by \eqref{eq:holderc}
  \begin{equation*}
    \|\Psi(v)\|_{X_{I}}
    \lesssim
    \|e^{-itA}\phi_{+}\|_{X_{I}}+
    \|v\|_{L^{a}H^{1}_{b}}^{\beta-1}
    \|v\|_{L^{p}H^{1}_{r}}\le
    \|e^{-itA}\phi_{+}\|_{X_{I}}+
    \epsilon^{\beta}.
  \end{equation*}
  Since $\|e^{-itA}\phi_{+}\|_{L^{p}_{I}L^{r}}\to0$
  as $T\to+\infty$, by choosing $\epsilon<1$ sufficiently
  small and $T$ sufficiently large with respect to $\epsilon$,
  we see that $\Psi:B_{\epsilon}\to B_{\epsilon}$.

  We now endow $B_{\epsilon}$ with the weaker $L^{p}_{I}L^{r}$ norm;
  note that $B_{\epsilon}$ with this norm 
  is a complete metric space (since bounded sequences in
  $L^{p}_{I}H^{1}_{r}$ and in
  $L^{a}_{I}H^{1}_{b}$ admit a weakly convergent subsequence).
  By \eqref{eq:introStrin} and H\"{o}lder's inequality
  (like in \eqref{eq:holderc}), we have
  \begin{equation*}
    \|\Psi(v_{1})-\Psi(v_{2})\|_{L^{p}_{I}L^{r}}
    \lesssim
    \|F(v_{1})-F(v_{2})\|_{L^{p'}_{I}L^{r'}}
    \lesssim
    \||v_{1}|+|v_{2}|\|_{L^{a}_{I}L^{r}}^{\beta-1}
    \|v_{1}-v_{2}\|_{L^{p}_{I}L^{r}}.
  \end{equation*}
  Since $H^{1}_{b}\hookrightarrow L^{r}$ this gives
  for all $v_{1},v_{2}\in B_{\epsilon}$
  \begin{equation*}
    \|\Psi(v_{1})-\Psi(v_{2})\|_{L^{p}_{I}L^{r}}
    \lesssim
    \||v_{1}|+|v_{2}|\|_{L^{a}_{I}H^{1}_{b}}^{\beta-1}
    \|v_{1}-v_{2}\|_{L^{p}_{I}L^{r}}
    \lesssim
    \epsilon^{\beta-1}
    \|v_{1}-v_{2}\|_{L^{p}_{I}L^{r}}.
  \end{equation*}
  By possibly taking $\epsilon$ smaller, we see that
  $\Psi:B_{\epsilon}\to B_{\epsilon}$ is a contraction
  on $B_{\epsilon}$ endowed with the $L^{p}_{I}L^{r}$
  norm, and hence has a unique fixed point $u$,
  which is a solution of the equation
  $iu_{t}-Au=f(u)$ for $t\ge T$.
  By an elementary continuation argument, like in the
  proof of global existence, we can continue this solution to
  all times $t<T$, and we obiously have
  $u\in L^{p}_{[0,+\infty)}L^{r}$.
\end{proof}

\section{The critical solution}\label{sec:4}

Given $\phi\in H^{1}$, denote by $u(\phi)$ the unique
solution of the nonlinear equation
\begin{equation*}
  iu_{t}-Au=|u|^{\beta-1}u
  \quad\text{with initial data}\quad 
  u(0)=\phi.
\end{equation*}
Recall that the energy is defined as
\begin{equation*}
  E(\phi)=
  (Au,u)_{L^{2}}+\frac{2}{\beta+1}\|u\|_{L^{\beta+1}}^{\beta+1}
\end{equation*}
We modify $E(\phi)$ by introducing the \emph{full energy},
defined as
\begin{equation*}
  \mathcal{E}(\phi)=
  E(\phi)+c_{0}\|u\|_{L^{2}}^{2}
\end{equation*}
where $c_{0}$ is the constant in \textbf{A1};
by conservation of both energy and mass,
this ensures that $\mathcal{E}(\phi)\simeq \|u(t)\|_{H^{1}}^{2}$
for all $t$.
Define the critical full energy $\mathcal{E} _{crit}$ as
\begin{equation*}
  \mathcal{E} _{crit}=
  \sup\{K>0:\forall \phi\in H^{1},
  \ \mathcal{E}(\phi)<K 
  \Rightarrow u(\phi)\in L^{p}L^{r}\}.
\end{equation*}
By Proposition \ref{pro:smallscatt} we know that 
$\mathcal{E}_{crit}>0$.
Our goal is to prove that $\mathcal{E}_{crit}=\infty$,
thus we assume that $\mathcal{E}_{crit}<\infty$ and we 
try to reach a contradiction. Since $\mathcal{E}_{crit}$ 
is finite, we can find a sequence $\phi_{n}\in H^{1}$
\begin{equation*}
  \mathcal{E}(\phi_{n})\ge \mathcal{E}_{crit},
  \quad
  \mathcal{E}(\phi_{n})\downarrow \mathcal{E}_{crit}
  \quad\text{and}\quad 
  u(\phi_{n})\not\in L^{p}L^{r}.
\end{equation*}

In order to construct a critical solution, we have to take
care of possible profiles escaping to spatial infinity.
To this end we introduce a constant coefficient,
non negative second order selfadjoint operator $A_{\infty}$
which represents the `limit operator' of $A$
as $x\to\infty$. In particular, the flow
$e^{-itA_{\infty}}$ satisfies the same Strichartz estimates
as $e^{it \Delta}$, and by a trivial modification
of standard NLS theory, the defocusing Cauchy problem
\begin{equation}\label{eq:Ainfpb}
  iu_{t}-A_{\infty}u=|u|^{\beta-1}u
  \qquad
  u(0,x)=\phi
\end{equation}
admits a unique global solution $u\in L^{p}L^{r}$, for
all initial data $\phi\in H^{1}$. 
The link between $A$ and $A_{\infty}$ is given
by the following assumption
(recall the notation \eqref{eq:translA}):

\begin{description}
  \item[B3] 
  (Asymptotic Strichartz estimates)
  We assume that the flow $e^{-itA_{\infty}}$ also satisfies
  the Strichartz estimates \eqref{eq:introStr} and
  \eqref{eq:introStrin}.
  Moreover, for any $\psi\in H^{1}$, $F\in L^{p'}L^{r'}$ and
  any sequence $(x_{n})_{n\ge1}$ with $x_{n}\to+\infty$
  or $x_{n}\to-\infty$, we have
  \begin{equation}\label{eq:asystr}
    \|(e^{-itA_{\infty}}-e^{-itA_{x_{n}}})\psi\|
        _{L^{p}L^{r}}
    \to0,
  \end{equation}
  \begin{equation}\label{eq:asystrin}
    \textstyle
    \|\int_{-\infty}^{t}
    (e^{-i(t-s)A_{\infty}}-e^{-i(t-s)A_{x_{n}}})F ds\|
      _{L^{p}L^{r}}
    \to0.
  \end{equation}
\end{description}

\begin{proposition}\label{pro:reduction}
  Assume that \textbf{A1}--\textbf{A6} and 
  \textbf{B1}--\textbf{B3} are satisfied.
  Let $(\phi_{n})$ be a bounded sequence in $H^{1}$, such that
  \begin{equation*}
    \mathcal{E}(\phi_{n})\ge \mathcal{E}_{crit},
    \quad
    \mathcal{E}(\phi_{n})\to \mathcal{E}_{crit}
    \quad\text{and}\quad 
    u(\phi_{n})\not\in L^{p}L^{r}.
  \end{equation*}
  Then $\exists\psi\in H^{1}$ such that,
  writing $\phi_{n}=\psi+R_{n}$, 
  up to a subsequence, the following holds:
  \begin{enumerate}[label=(\roman*)]
    \item 
    $\mathcal{E}(\psi)=\mathcal{E}_{crit}$ and 
    $u(\psi)\not\in L^{p}L^{r}$
    \item 
    $\|e^{-itA}R_{n}\|_{L^{\infty}L^{s}}\to0$ for all
    $s\in(2,\infty]$
    and
    $\mathcal{E}(R_{n})\to0$
    \item 
    $\phi_{n}\to \psi$ in $H^{1}$.
  \end{enumerate}
  We shall call $u(\psi)$ the \emph{critical solution}.
\end{proposition}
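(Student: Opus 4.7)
The plan is to apply the profile decomposition (Theorem \ref{the:profiledec}) to the bounded sequence $(\phi_{n})$, attach a nonlinear profile to each linear profile, assemble an approximate solution, and use the nonlinear perturbation Proposition \ref{pro:NLpert} to force all but one profile (and the residual) to be trivial. Extract a subsequence so that
\[
\phi_{n}=\sum_{j=1}^{J}e^{it^{n}_{j}A}\tau_{x^{n}_{j}}\psi_{j}+R^{n}_{J},
\]
with the orthogonality (a) and the splittings (c) of Theorem \ref{the:profiledec}. To each $\psi_{j}\neq 0$ attach a global nonlinear profile as follows. If $x^{n}_{j}\equiv 0$: take $U_{j}=u(\psi_{j})$ when $t^{n}_{j}\equiv 0$, and $U_{j}$ from the wave operator of Proposition \ref{pro:waveopA} for the $A$--NLS when $|t^{n}_{j}|\to\infty$. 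If $|x^{n}_{j}|\to\infty$: take $V_{j}$ solving the $A_{\infty}$--NLS with datum $\psi_{j}$ (case $t^{n}_{j}\equiv 0$) or built from the wave operator for the $A_{\infty}$--NLS (case $|t^{n}_{j}|\to\infty$). In the last two cases $U_{j}$ (resp.\ $V_{j}$) automatically lies in $L^{p}L^{r}$, either by construction or because the $A_{\infty}$--NLS is fully scattering.

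Next, form the approximate solution
\[
\widetilde{u}^{n}_{J}(t)=\sum_{j=1}^{J}U^{n}_{j}(t)+e^{-itA}R^{n}_{J},
\]
where $U^{n}_{j}(t)=U_{j}(t-t^{n}_{j})$ for local profiles and $U^{n}_{j}(t)=\tau_{x^{n}_{j}}V_{j}(t-t^{n}_{j})$ for profiles escaping to spatial infinity. The approximation error $e_{n}=i\partial_{t}\widetilde{u}^{n}_{J}-A\widetilde{u}^{n}_{J}-F(\widetilde{u}^{n}_{J})$ splits into three pieces: the commutation mismatch between $e^{-itA}$ and $e^{-itA_{\infty}}$ acting on the infinity profiles, controlled by assumption \textbf{B3}; the nonlinear cross terms $F(\sum_{j}U^{n}_{j})-\sum_{j}F(U^{n}_{j})$, which tend to $0$ in $L^{p'}L^{r'}$ thanks to the orthogonality (a) combined with Lemma \ref{lem:conv} and a density/truncation argument; and the nonlinear interaction with $e^{-itA}R^{n}_{J}$, which becomes small by (b) of Theorem \ref{the:profiledec} after choosing first $J$ large and then $n$ large. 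Provided that every $U_{j}$ belongs to $L^{p}L^{r}$ with uniform bounds, this yields a uniform $L^{p}L^{r}$ bound for $\widetilde{u}^{n}_{J}$, to which Proposition \ref{pro:NLpert} applies and gives $u(\phi_{n})\in L^{p}L^{r}$.

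Now combine this with the energy balance
\[
\mathcal{E}(\phi_{n})=\sum_{j=1}^{J}\mathcal{E}^{n}_{j}+\mathcal{E}(R^{n}_{J})+o(1)
\]
furnished by (c) and \textbf{A1}, where each $\mathcal{E}^{n}_{j}\ge 0$ converges to the full energy of the corresponding nonlinear profile. Since $\mathcal{E}(\phi_{n})\downarrow\mathcal{E}_{crit}$, if two or more $\psi_{j}$ were nonzero then every nonlinear profile would lie in $L^{p}L^{r}$: the local unshifted ones because their energy is strictly below $\mathcal{E}_{crit}$, and the others by construction. The previous paragraph would then force $u(\phi_{n})\in L^{p}L^{r}$, a contradiction. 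The same argument rules out a single profile with $|x^{n}_{1}|\to\infty$ or $|t^{n}_{1}|\to\infty$. Therefore exactly one profile $\psi=\psi_{1}$ survives, with $x^{n}_{1}\equiv t^{n}_{1}\equiv 0$, and $\mathcal{E}(\psi)=\mathcal{E}_{crit}$; moreover $u(\psi)\notin L^{p}L^{r}$, because otherwise Proposition \ref{pro:NLpert} applied with $u=u(\psi)$, $v=u(\phi_{n})$ and the smallness of $\phi_{n}-\psi$ would again yield $u(\phi_{n})\in L^{p}L^{r}$. This proves (i). For (ii), in (b) only the trivial $\psi_{2}=0$ appears, giving $\limsup_{n}\|e^{-itA}R_{n}\|_{L^{\infty}L^{\infty}}=0$; interpolation with the uniform $L^{\infty}L^{2}$ bound from \textbf{A2} extends this to all $L^{\infty}L^{s}$, $s\in(2,\infty]$. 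Finally $\mathcal{E}(R_{n})=\mathcal{E}(\phi_{n})-\mathcal{E}(\psi)+o(1)\to 0$, and \textbf{A1} forces $\|R_{n}\|_{H^{1}}\to 0$, which is (iii).

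The main obstacle is the control of the approximation error $e_{n}$: the mismatch between the $A$-- and $A_{\infty}$--flows on profiles escaping in space must be absorbed through \textbf{B3} in both the linear and the Duhamel contributions, and the $L^{p}L^{r}$ quasi-orthogonality of distinct nonlinear profiles must be extracted from the purely space-time orthogonality (a) via smooth truncation and density.
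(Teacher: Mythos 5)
Your proposal follows essentially the same approach as the paper: apply the profile decomposition, attach nonlinear profiles case by case (local vs.\ time-shifted, and $A$ vs.\ $A_{\infty}$ for profiles escaping in space), assemble an approximate solution, control the error via \textbf{B3}, the cross-term truncation argument, and the smallness of the residual, then invoke the perturbation Proposition \ref{pro:NLpert} to contradict $u(\phi_n)\notin L^pL^r$ unless there is exactly one unshifted profile with critical energy. The only cosmetic differences are that you fold $e^{-itA}R^n_J$ into the approximate solution rather than carrying it separately as the $\tau_2$-type error, and that you also cite Lemma \ref{lem:conv} for the cross terms, while the paper handles them purely by compactly supported truncation plus the divergence $|t_j^n-t_k^n|+|x_j^n-x_k^n|\to\infty$; both points are bookkeeping choices and do not affect the argument.
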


The proof of Proposition \ref{pro:reduction} begins by
applying Theorem \ref{the:profiledec} to the $H^{1}$ bounded
sequence $(\phi_{n})$ to obtain the profile decomposition
\begin{equation}\label{eq:profifi}
  \phi _{n}=\sum_{j=1}^{J}
  e^{it^{n}_{j}A}\tau_{x^{n}_{j}}\psi_{j}+R^{n}_{J}
\end{equation}
which satisfies properties (a)--(e) from the Theorem.
In particular from (c) we get
\begin{equation}\label{eq:sumener}
  \mathcal{E}_{crit}=
  \sum_{j=1}^{J}
  \mathcal{E}(e^{it^{n}_{j}A}\tau_{x^{n}_{j}}\psi_{j})
  +\mathcal{E}(R^{n}_{J})+o(1)
\end{equation}
and hence
\begin{equation}\label{eq:subcr}
  \infty>
  \mathcal{E}_{crit}\ge
  \limsup_{n}
  \sum_{j=1}^{J}
  \mathcal{E}(e^{it^{n}_{j}A}\tau_{x^{n}_{j}}\psi_{j}).
\end{equation}
We note also that, by interpolation, we have for all
$s\in(2,\infty]$ and a
suitable $\theta\in[0,1)$
\begin{equation}\label{eq:RLpLr}
  \|e^{-itA}R^{n}_{J}\|_{L^{\infty}L^{s}}\lesssim
  \|e^{-itA}R^{n}_{J}\|_{L^{\infty}L^{2}}^{\theta}
  \|e^{-itA}R^{n}_{J}\|_{L^{\infty}L^{\infty}}^{1-\theta}
  =o(1)
  \qquad\text{as}\quad n\to \infty.
\end{equation}

\subsection{Proof of Proposition \ref{pro:reduction}:
reduction to a unique profile} \label{sub:uniq_prof}

Our next step will be to
prove that there is exactly one profile, i.e.,
$J_{max}=1$ in the decomposition \eqref{eq:profifi}.

Note that $J_{max}>0$ since $\mathcal{E}_{crit}>0$.
Assume by contradiction that $J_{max}\ge2$.
Then from \eqref{eq:subcr} we get
\begin{equation}\label{eq:subcrit}
  \mathcal{E}(e^{it^{n}_{j}A}\tau_{x^{n}_{j}}\psi_{j})
  <\mathcal{E}_{crit}
\end{equation}
for all $j=1,\dots,J_{max}$. Indeed, if we had
$\mathcal{E}(e^{it^{n}_{j}A}\tau_{x^{n}_{j}}\psi_{j})
\ge \mathcal{E}_{crit}$
for some $j$, then for other values of $j\le J_{max}$ 
we would have
\begin{equation*}
  \mathcal{E}(e^{it^{n}_{j}A}\tau_{x^{n}_{j}}\psi_{j})\to0
  \quad \text{as}\quad n\to+\infty
\end{equation*}
implying $\|\psi_{j}\|_{L^{r}}\to0$
so that $\psi_{j}=0$, but this would contradict the 
definition of $J_{max}$.

Since all profiles satisfy \eqref{eq:subcrit},
the solutions built with such profiles as data will have
a subcritical energy in view of \eqref{eq:subcr},
provided we pick a suitable subsequence,
and hence they will scatter.
The sum of these scattering solutions will be close to
$u(\phi_{n})$ for large $n$, and this will imply that
$u(\phi_{n})$ scatters too, against its definition.
We proceed with this construction.

To each profile $\psi_{j}$ we associate a solution
$U_{j}(t,x)$ and then we define
\begin{equation}\label{eq:Ujn}
  U_{j}^{n}(t,x)=U_{j}(t-t^{n}_{j},x-x^{n}_{j}).
\end{equation}
However, the definition of $U_{j}$ depends on the behaviour
of the sequences $(t^{n}_{j})_{n\ge1}$ and
$(x^{n}_{j})_{n\ge1}$. We consider four cases;
recall that these sequences are standard.
We write for simplicity
\begin{equation*}
  F(z)=|z|^{\beta-1}z,
  \qquad
  z\in \mathbb{C}.
\end{equation*}

\textsc{Case 1:
$t^{n}_{j}=x^{n}_{j}=0$ for all $n$.}
Then $U_{j}(t,x)$ is the solution of
\begin{equation*}
  i \partial_{t}u-Au=F(u),
  \qquad
  u(0,x)=\psi_{j}
\end{equation*}
given by Proposition \ref{pro:globex}.
Note that by \eqref{eq:subcrit} we have
\begin{equation*}
  U^{n}_{j}(0,x)=e^{it^{n}_{j}A}\tau_{x^{n}_{j}}\psi_{j}
  =\psi_{j},
  \qquad
  \|U^{n}_{j}\|_{L^{p}L^{r}}\lesssim\|\psi_{j}\|_{H^{1}}.
\end{equation*}
In this case we set
\begin{equation}\label{eq:rcase1}
  r^{n}_{j}(t,x):=0.
\end{equation}

\textsc{Case 2: 
$t^{n}_{j}\to \pm \infty$ and
  $x^{n}_{j}=0$ for all $n$.}
Consider for instance the case $t^{n}_{j}\to + \infty$
(the case $t^{n}_{j}\to -\infty$ is identical).
Then we define $U_{j}(t,x)$ as the solution of the
scattering problem at $-\infty$,
given by Proposition \ref{pro:waveopA},
\begin{equation*}
  i \partial_{t}u-Au=F(u),
  \qquad
  \lim_{t\to- \infty}\|U_{j}(t)-e^{-itA}\psi_{j}\|_{H^{1}}=0.
\end{equation*}
Note in particular that we have $U_{j}\in L^{p}L^{r}$.
By time translation invariance of the equation we can write
\begin{equation*}
  \textstyle
  U^{n}_{j}(t,x)=
  U_{j}(t-t^{n}_{j},x)=
  e^{-itA}U_{j}(-t^{n}_{j},x)
  +i\int_{0}^{t}e^{-i(t-s)A}F(U_{j}(s-t^{n}_{j},x))ds
\end{equation*}
\begin{equation*}
  \textstyle
  =e^{-itA }e^{it^{n}_{j}A}\psi_{j}
  +i\int_{0}^{t}e^{-i(t-s)A }F(U^{n}_{j}(s,x))ds
  +r^{n}_{j}
\end{equation*}
where
\begin{equation*}
  r^{n}_{j}:= e^{-itA }
      (U^{n}_{j}(0)-e^{it^{n}_{j}A}\psi_{j}).
\end{equation*}
By Strichartz estimates and the scattering property we have
\begin{equation}\label{eq:rcase2}
  \|r^{n}_{j}\|_{L^{p}L^{r}}\lesssim
  \|U^{n}_{j}(0)-e^{it^{n}_{j}A}\psi_{j}
    \|_{H^{1}}=
  \|U_{j}(-t^{n}_{j})- e^{it^{n}_{j}A}\psi_{j} \|_{H^{1}}
  \to0
  \quad\text{as}\quad n\to \infty.
\end{equation}

\textsc{Case 3: 
$t^{n}_{j}=0$ for all $n$ and
  $x^{n}_{j}\to \pm \infty$.}
We define $U_{j}$ as the solution of
\begin{equation*}
  iu_{t}+A_{\infty} u=F(u),
  \qquad
  u(0,x)=\psi_{j}
\end{equation*}
where $A_{\infty}$ is the constant coefficient,
non negative operator introduced in \textbf{B3}. 
Note that the properties of $A_{\infty}$ are similar to 
$\Delta$ and hence by the standard NLS theory we have
$U_{j}\in L^{p}L^{r}$.
Since $A_{\infty}$ commutes with translations we have
\begin{equation*}
  \textstyle
  U^{n}_{j}(t,x)=
  U_{j}(t,x-x^{n}_{j})=
  e^{it A_{\infty}}\tau_{-x_{n}}\psi_{j}
  +i\int_{0}^{t}e^{-i(t-s)A_{\infty}}
  F(U_{j}(s,x-x^{n}_{j}))ds
\end{equation*}
\begin{equation*}
  \textstyle
  =
  e^{-it A}\psi_{j}+
  i\int_{0}^{t}
  e^{-i(t-s)A}F(U^{n}_{j}(s,x))ds+
  r^{n}_{j}
\end{equation*}
where $r^{n}_{j}:=I+iII$,
\begin{equation*}
  \textstyle
  I=(e^{it A_{\infty}}-e^{-it A}) \tau_{-x^{n}_{j}}\psi_{j},
\end{equation*}
\begin{equation*}
  \textstyle
  II=\int_{0}^{t}
  (e^{-i(t-s)A_{\infty}}-e^{-i(t-s)A})\tau_{-x^{n}_{j}}
  F(U_{j}(s,x))ds.
\end{equation*}
Recalling \textbf{B3} we have
\begin{equation*}
  \|I\|_{L^{p}L^{r}}=
  \|\tau_{-x^{n}_{j}}I\|_{L^{p}L^{r}}\le
  \|(e^{it A_{\infty}}-e^{-it A_{x^{n}_{j}}})\psi_{j}\|
  _{L^{p}L^{r}}\to0.
\end{equation*}
By (\ref{eq:holder}) we have
\begin{equation*}
  \|F(U_{j})\|_{L^{p'}L^{r'}} \le
  \|U_{j}\|_{L^{\infty}L^{r}}^{\beta-p+1}
  \|U_{j}\|_{L^pL^r}^{p-1} < \infty
\end{equation*}
thus by \textbf{B3} we have also
\begin{equation*}
  \textstyle
  \|II\|_{L^{p}L^{r}}=
  \|\tau_{-x^{n}_{j}}II\|_{L^{p}L^{r}}\le
  \|\int_{0}^{t}
  (e^{i(t-s)A_{\infty}}-e^{-i(t-s)A_{x^{n}_{j}}})F(U_{j})ds\|
    _{L^{p}L^{r}}\to0
\end{equation*}
and we obtain also in this case
$\|r^{n}_{j}\|_{L^{p}L^{r}}\to0$ as $n\to \infty$.

\textsc{Case 4:
$t^{n}_{j}\to \pm \infty$ and
$x^{n}_{j}\to \pm \infty$.}
Again, we focus on the case $t^{n}_{j}\to+\infty$ and define
$U_{j}(t,x)$ as the solution of the
scattering problem at $-\infty$,
given by Proposition \ref{pro:waveopA} applied to $A_{\infty}$
in place of $A$:
\begin{equation*}
  i u_{t}+A_{\infty} u=F(u),
  \qquad
  \lim_{t\to- \infty}
  \|U_{j}(t)-e^{itA_{\infty}}\psi_{j}\|_{H^{1}}=0.
\end{equation*}
Again, $U_{j}\in L^{p}L^{r}$ by standard NLS theory.
We have then
\begin{equation*}
  \textstyle
  U^{n}_{j}(t,x)=
  U_{j}(t-t^{n}_{j},x-x^{n}_{j})=
  e^{itA_{\infty}}U_{j}(-t^{n}_{j},x-x^{n}_{j})
  +i\int_{0}^{t}
  e^{-i(t-s)A_{\infty}}F(U_{j}(s-t^{n}_{j},x-x^{n}_{j}))ds
\end{equation*}
\begin{equation*}
  \textstyle
  =e^{-itA }e^{it^{n}_{j}A}\tau_{-x_{n}}\psi_{j}
  +i\int_{0}^{t}e^{-i(t-s)A }F(U^{n}_{j}(s,x))ds
  +r^{n}_{j}
\end{equation*}
where $r^{n}_{j}=I+iII$,
\begin{equation*}
  I=
  e^{itA_{\infty}}\tau_{-x^{n}_{j}}U_{j}(-t^{n}_{j},x)-
  e^{-itA }e^{it^{n}_{j}A}\tau_{-x^{n}_{j}}\psi_{j},
\end{equation*}
\begin{equation*}
  \textstyle
  II=\int_{0}^{t}(e^{-i(t-s)A_{\infty}}-e^{-i(t-s)A})
  \tau_{-x^{n}_{j}}F(U_{j}(s-t^{n}_{j},x))ds.
\end{equation*}
We may rewrite $\tau_{x^{n}_{j}}I$ in the form
\begin{equation*}
  \tau_{x^{n}_{j}}I=
  e^{itA_{\infty}}
  (U_{j}(-t^{n}_{j},x)-e^{-it^{n}_{j}A_{\infty}}\psi_{j})+
  (e^{i(t-t^{n}_{j})A_{\infty}}-e^{-i(t-t^{n}_{j})A_{-x^{n}_{j}}})
  \psi_{j}
\end{equation*}
hence by Strichartz we get
\begin{equation*}
  \|I\|_{L^{p}L^{r}}\lesssim
  \|U_{j}(-t^{n}_{j})-e^{-it^{n}_{j}A_{\infty}}\psi_{j}\|_{H^{1}}
  +
  \|(e^{i(t-t^{n}_{j})A_{\infty}}-e^{-i(t-t^{n}_{j})A_{x^{n}_{j}}})
    \psi_{j}\| _{L^{p}L^{r}}
\end{equation*}
and the first term tends to 0 by the scattering property of
$U_{j}$ while the second term tends to 0 by
\textbf{B3}. On the other hand, writing
\begin{equation*}
\begin{split}
  \tau_{x^{n}_{j}}II=&
  \textstyle
  \int_{0}^{t}(e^{-i(t-s)A_{\infty}}-e^{-i(t-s)A_{-x^{n}_{j}}})
      F(U_{j}(s-t^{n}_{j},x))ds
  \\
  =& \textstyle
  \int_{-t^{n}_{j}}^{t-t^{n}_{j}}
  (e^{-i(t-s-t^{n}_{j})A_{\infty}}-
  e^{-i(t-s-t^{n}_{j})A_{-x^{n}_{j}}})
      F(U_{j}(s,x))ds
\end{split}
\end{equation*}
we get like in Case 3
\begin{equation*}
  \textstyle
  \|II\|_{L^{p}L^{r}}
  \lesssim
  \|\int_{-t^{n}_{j}}^{t}
  (e^{-i(t-s)A_{\infty}}-e^{-i(t-s)A_{-x^{n}_{j}}})F(U_{j}) ds\|
    _{L^{p}L^{r}}
\end{equation*}
which tends to 0 by \textbf{B3}, and in conclusion we get
$\|r^{n}_{j}\|_{L^{p}L^{r}}\to0$ as $n\to \infty$.

\textsc{The approximate solution}.
We have thus constructed for all $n\ge1$ and
$j=1,\dots,J$ functions $U^{n}_{j}$ which satisfy
\begin{equation*}
  \textstyle
  U^{n}_{j}(t,x)=
  e^{-i(t-t^{n}_{j})A}\tau_{-x^{n}_{j}}\psi_{j}
  +i\int_{0}^{t}e^{-i(t-s)A }F(U^{n}_{j}(s,x))ds
  +r^{n}_{j}
\end{equation*}
with
\begin{equation*}
  \lim_{n}\|r^{n}_{j}\|_{L^{p}L^{r}}=0.
\end{equation*}
More precisely we have
\begin{equation}\label{eq:Ujntrans}
  U^{n}_{j}(t,x)=U_{j}(t-t^{n}_{j},x-x^{n}_{j})
  \quad\text{with}\quad 
  U_{j}\in L^{p}L^{r}.
\end{equation}
Then recalling \eqref{eq:profifi} we see that the sum
\begin{equation*}
  W^{n}_{J}=\sum_{j=1}^{J}U^{n}_{j}
\end{equation*}
satisfies
\begin{equation}\label{eq:approxeq}
  \textstyle
  W^{n}_{J}=e^{-itA}\phi_{n}
  +i\int_{0}^{t}
  e^{-i(t-s)A}F(W^{n}_{J})ds
  + \rho^{n}_{J}
  -e^{-itA}R^{n}_{J}
\end{equation}
where
\begin{equation}\label{eq:defrho}
  \textstyle
  \rho^{n}_{J}(t,x)=
  \sum_{j=1}^{J}r^{n}_{j}
  +i\int_{0}^{t}
  e^{-i(t-s)A}
  \left[
    \sum_{j=1}^{J}F(U^{n}_{j})
    -F(\sum_{j=1}^{J}U^{n}_{j})
  \right]ds.
\end{equation}
Recall that by \eqref{eq:RLpLr} we have
\begin{equation*}
  \lim_{n}\|e^{-itA}R^{n}_{J}\|_{L^{\infty}L^{r}}=0.
\end{equation*}
We prove next that 
\begin{equation*}
  \lim_{n}\|\rho^{n}_{J}\|_{L^{p}L^{r}}=0.
\end{equation*}
The first term in \eqref{eq:defrho}
satisfies this property since 
$\lim_{n}\|r^{n}_{J}\|_{L^{p}L^{r}}=0$.

For the integral term, we first note that the
function $F(z)=|z|^{\beta-1}z$ satisfies,
for all $J\ge1$ and $z_{1},\dots,z_{J}\in \mathbb{C}$,
\begin{equation*}
  \textstyle
  |\sum_{j=1}^{J}F(z_{j})-F(\sum_{j=1}^{J}z_{j})|\lesssim
  \sum_{j\neq k}|z_{j}|^{\beta-1}|z_{k}|.
\end{equation*}
Thus by Strichartz  we get
\begin{equation*}
  \textstyle
  \|\int_{0}^{t}
  e^{-i(t-s)A}
  \left[
    \sum_{j=1}^{J}F(U^{n}_{j})
    -F(\sum_{j=1}^{J}U^{n}_{j})
  \right]ds\|_{L^{p}L^{r}}
  \lesssim\sum_{j\neq k}
  \||U^{n}_{j}|^{\beta-1}U^{n}_{k}\|_{L^{p'}L^{r'}}.
\end{equation*}

Now pick $\Phi_{j}(t,x)\in C_{c}(\mathbb{R}^{2})$ such that
$\|U_{j}-\Phi_{j}\|_{L^{p}L^{r}}<\epsilon$
and 
$\|\Phi_{j}\|_{L^{\infty}L^{r}}\lesssim 
  \|U_{j}\|_{L^{\infty}L^{r}}$
(this can be constructed e.g.~by a cutoff plus regularization),
and let 
\begin{equation*}
  \Phi^{n}_{j}(t,x)=\Phi_{j}(t-t^{n}_{j},x-x^{n}_{j}).
\end{equation*}
We can write
\begin{equation*}
  \||U^{n}_{j}-\Phi^{n}_{j}|^{\beta-1}U^{n}_{k}\|_{L^{p'}L^{r'}}
  \le
  \bigl\|\|U^{n}_{j}-\Phi^{n}_{j}\|_{L^{r}}^{\beta-1}
    \|U^{n}_{k}\|_{L^{r}}\bigr\|_{L^{p'}}.
\end{equation*}
Noticing that $\beta p'>p$, let $\delta=\beta p'-p$
and use H\"{o}lder in time as follows:
\begin{equation*}
\begin{split}
  \textstyle
  \int|f(t)|^{(\beta-1)p'}  |g(t)|^{p'}dt= &
  \textstyle
  \int|f(t)|^{(\beta-1)\frac{\beta p'}{\beta}}
  |g(t)|^{\frac{\beta p'}{\beta}}dt=
  \int(|f|^{p})^{\frac{\beta-1}{\beta}}
  (|g|^{p})^{\frac 1 \beta}
  |f|^{\frac{\beta-1}{\beta}\delta}
  |g|^{\frac \delta \beta}
  \\
  \le&
  \textstyle
  (\int|f|^{p})^{\frac{\beta-1}{\beta}}
  (\int|g|^{p})^{\frac{1}{\beta}}
  \|f\|_{L^{\infty}}^{\frac{\beta-1}{\beta}\delta}
  \|g\|_{L^{\infty}}^{\frac \delta \beta}.
\end{split}
\end{equation*}
Continuing the previous inequality we get then
\begin{equation*}
  \textstyle
  \lesssim
  \|U^{n}_{j}-\Phi^{n}_{j}\|
    _{L^{p} L^{r}}^{\frac{\beta-1}{\beta}\frac{p}{p'}}
  \|U^{n}_{k}\|_{L^{p} L^{r}}^{\frac 1 \beta \frac{p}{p'}}
  \|U^{n}_{k}\|_{L^{\infty} L^{r}}^{\delta \frac{p}{p'}}
  \lesssim\epsilon^{\theta}
\end{equation*}
for $\theta=3+\frac 5 \beta>0$.
In a similar way we have
\begin{equation*}
  \||\Phi^{n}_{j}|^{\beta-1}(U^{n}_{k}-\Phi^{n}_{k})\|
  _{L^{p'}L^{r'}}\lesssim \epsilon^{\theta'}
\end{equation*}
for a suitable $\theta'>0$.
Summing up we have proved
\begin{equation*}
  \||U^{n}_{j}|^{\beta-1}U^{n}_{k}\|_{L^{p'}L^{r'}}\lesssim
  \epsilon^{\theta}+\epsilon^{\theta'}+
  \||\Phi^{n}_{j}|^{\beta-1}\Phi^{n}_{k}\|_{L^{p'}L^{r'}}.
\end{equation*}
Since 
$|t^{n}_{j}-t^{n}_{k}|+ |x^{n}_{j}-x^{n}_{k}|\to+\infty$,
the supports of $\Phi^{n}_{j},\Phi^{n}_{k}$ are disjoint
for large $n$ and the last term vanishes.

\textsc{Conclusion}. 
By Proposition \ref{pro:NLpert} applied to
$w=W^{n}_{J}$ and $v=u(\phi_{n})$, we see that
$u(\phi_{n})\in L^{p}L^{r}$ for large $n$,
contradicting the definition of $\phi_{n}$.
We conclude $J_{max}=1$.

\subsection{Proof of Proposition \ref{pro:reduction}:
the critical solution}\label{sub:crit_solu}

We have proved that, if $E_{crit}<\infty$ and
$\phi_{n}\in H^{1}$ are such that $E(\phi_{n})\downarrow E_{crit}$
and $u(\phi_{n})\not\in L^{p}L^{r}$, up to a subsequence,
the profile decomposition of $\phi_{n}$ has exactly one profile
$\psi_{1}=\psi$. 
Writing $t_{n}=t^{n}_{j}$, $x_{n}=x^{n}_{j}$, $R_{n}=R^{n}_{j}$, 
this means that
\begin{equation}\label{eq:profifib}
  \phi _{n}= e^{it_{n}A}\tau_{x_{n}}\psi+R_{n}
\end{equation}
where $t_{n},x_{n}$ are standard sequences, and
$\|e^{-itA}R_{n}\|_{L^{\infty}L^{s}}\to0$
for all $s\in(2,\infty]$ by \eqref{eq:RLpLr}.

We now prove that we must have $t_{n}=0$, $x_{n}=0$.
If not, this would mean that the profile $\psi=\psi_{1}$
falls in one of Cases 2, 3 or 4
in the proof of Proposition \ref{pro:reduction};
then the corresponding
solution $U_{n}=U_{1}^{n}$ constructed in the proof
belongs to $L^{p}L^{r}$ automatically (even if the energy is not
subcritical) and satisfies
\begin{equation*}
  \textstyle
  U_{n}(t,x)=
  e^{-i(t-t_{n})A}\tau_{x_{n}}\psi
  +i\int_{0}^{t}e^{-i(t-s)A }F(U_{n})ds
  +r^{n}_{j},
  \qquad
  \|r_{n}\|_{L^{p}L^{r}}\to0
\end{equation*}
while $u=u(\phi_{n})$ satisfies
\begin{equation*}
  \textstyle
  u(t,x)=
  e^{-i(t-t_{n})A}\tau_{x_{n}}\psi
  +i\int_{0}^{t}e^{-i(t-s)A }F(u)ds
  +e^{-itA}R_{n},
  \qquad
  \|e^{-itA}R_{n}\|_{L^{\infty}L^{r}}\to0.
\end{equation*}
Comparing the two equations with the aid of the
Nonlinear Perturbation Proposition \ref{pro:NLpert}, we deduce
that $u(\phi_{n})\in L^{p}L^{r}$ for large $n$, which is
impossible. We conclude that the profile $\psi$ corresponds to
Case 1, so that $t_{n}=x_{n}=0$ and \eqref{eq:profifib}
simplifies to
\begin{equation}\label{eq:profifi2}
  \phi_{n}=\psi+R_{n}.
\end{equation}
From \eqref{eq:subcr} we get then 
$\mathcal{E}_{crit}\ge \mathcal{E}(\psi)$.
But we can not have $\mathcal{E}_{crit}> \mathcal{E}(\psi)$: 
this would imply
that $u(\psi)\in L^{p}L^{r}$ and hence, again by
Proposition \ref{pro:NLpert}, by \eqref{eq:profifi2} and
by $\|e^{-itA}R_{n}\|_{L^{p}L^{r}}\to0$, that
$u(\phi_{n})\in L^{p}L^{r}$, which is impossible.
Summing up, we have constructed $\psi\in H^{1}$ such that
\begin{equation}\label{eq:critsol}
  \mathcal{E}(\psi)=\mathcal{E}_{crit},
  \qquad
  u(\psi)\not\in L^{p}L^{r}.
\end{equation}
Finally, \eqref{eq:sumener} becomes
\begin{equation*}
  \mathcal{E}_{crit}=
  \mathcal{E}(\psi)
  +\mathcal{E}(R_{n})+o(1).
\end{equation*}
Since $\mathcal{E}(\psi)=\mathcal{E}_{crit}$, 
we deduce $\mathcal{E}(R_{n})\to0$,
which implies $\phi_{n}\to \psi$ in $H^{1}$
and the proof of Proposition \ref{pro:reduction} is complete.

\subsection{Compactness of the critical solution}
\label{sub:comp_rigi}

The critical solution \eqref{eq:critsol}
has a relatively compact flow:

\begin{proposition}\label{pro:compact}
  Let $u=u(\psi)$ be the critical solution from
  Proposition \ref{pro:reduction}. 
  Then $\{u(t,\cdot):t\in \mathbb{R}\}$
  is a relatively compact subset of $H^{1}$.
\end{proposition}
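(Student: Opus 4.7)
The plan is to reduce the compactness statement to a direct application of Proposition \ref{pro:reduction}. Given any sequence $(t_n) \subset \mathbb{R}$, I would set $\phi_n := u(t_n, \cdot) \in H^{1}$ and verify that this sequence satisfies the three hypotheses of Proposition \ref{pro:reduction}; conclusion (iii) of that proposition would then immediately yield, up to subsequence, an $H^{1}$-limit $\widetilde{\psi}$ with $u(t_n) \to \widetilde{\psi}$ in $H^{1}$, which is exactly sequential relative compactness of $\{u(t,\cdot) : t \in \mathbb{R}\}$.

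To verify the hypotheses I would use conservation. By \textbf{B2} together with \textbf{A1}, the full energy $\mathcal{E}$ is equivalent to $\|\cdot\|_{H^{1}}^{2}$ and is conserved along the flow, so
\[
  \mathcal{E}(\phi_{n}) \;=\; \mathcal{E}(\psi) \;=\; \mathcal{E}_{crit}
  \quad\text{for every } n.
\]
This immediately gives boundedness of $(\phi_{n})$ in $H^{1}$, as well as $\mathcal{E}(\phi_{n}) \ge \mathcal{E}_{crit}$ and $\mathcal{E}(\phi_{n}) \to \mathcal{E}_{crit}$. For the non-scattering hypothesis, I would exploit the autonomy of equation \eqref{eq:diffNLS} in time: the solution issuing from the datum $\phi_{n} = u(t_{n})$ is simply the time translate
\[
  u(\phi_{n})(t,\cdot) \;=\; u(t+t_{n},\cdot),
\]
so $\|u(\phi_{n})\|_{L^{p}L^{r}} = \|u(\psi)\|_{L^{p}L^{r}} = +\infty$ by Proposition \ref{pro:reduction}(i). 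Thus $u(\phi_{n}) \notin L^{p}L^{r}$, completing the check.

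Applying Proposition \ref{pro:reduction} to $(\phi_{n})$ then produces, after subsequence extraction, a limit $\widetilde{\psi} \in H^{1}$ with $\phi_{n} \to \widetilde{\psi}$ in $H^{1}$, which is what is required. There is essentially no obstacle beyond confirming the hypotheses of Proposition \ref{pro:reduction}; all three are automatic from the conservation laws and the time-translation invariance of \eqref{eq:diffNLS}, and the entire content of the statement has already been packaged into the conclusion $\phi_{n} \to \widetilde{\psi}$ of that earlier proposition.
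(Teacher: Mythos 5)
Your proof is correct and is exactly the argument the paper uses: translate the critical solution in time, observe by exact conservation and time-translation invariance that the resulting sequence of data satisfies the hypotheses of Proposition \ref{pro:reduction}, and invoke conclusion (iii) of that proposition to extract an $H^{1}$-convergent subsequence. The only difference is that you spell out the verification of the hypotheses which the paper leaves implicit.
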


\begin{proof}
  It is sufficient to prove that $\{u(t,\cdot):t\in \mathbb{R}\}$
  is relatively sequentially compact. Thus, given an
  arbitrary sequence of times $(\tau_{k})\subseteq \mathbb{R}$, let
  $\widetilde{\phi}_{k}=u(t_{k},\cdot)$; then
  $(\widetilde{\phi}_{k})$ satisfies the assumptions of 
  Proposition \ref{pro:reduction}, and by property (iii) we can
  extract a subsequence which converges in $H^{1}$, proving the
  claim.
\end{proof}

\begin{corollary}\label{cor:compactcrit}
  Let $u=u(\psi)$ be the critical solution from
  Proposition \ref{pro:reduction}. 
  Then for any $\sigma\in(0,1]$ there exists $R>0$
  such that for all $t\in \mathbb{R}$ one has
  \begin{equation}\label{eq:comcr1}
    \|u(t,\cdot)\|_{L^{2}(|x|\ge R)}\le 
    \sigma \|u(t,\cdot)\|_{L^{2}(|x|\le R)}
  \end{equation}
  and
  \begin{equation}\label{eq:comcr2}
    \textstyle
    \|u_{x}(t,\cdot)\|_{L^{2}(|x|\ge R)}
    + \frac{2}{\beta+1}\|u\|_{L^{\beta+1}(|x|\ge R)}
    \le 
    \sigma \|u_{x}(t,\cdot)\|_{L^{2}(|x|\le R)}
    + \frac{2}{\beta+1}\sigma
    \|u\|_{L^{\beta+1}(|x|\le R)}.
  \end{equation}
\end{corollary}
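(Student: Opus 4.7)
The main input is the compactness of the orbit $K=\{u(t,\cdot):t\in\mathbb{R}\}$ in $H^1$ from Proposition~\ref{pro:compact}. My plan has two independent ingredients: a uniform tail-decay estimate for $u(t)$ simultaneously in $L^2$, $\dot H^1$ and $L^{\beta+1}$, and a uniform positive lower bound on suitable norms of $u(t)$.

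For the tail decay, since $K$ is precompact in $H^1$ it is totally bounded, so given $\epsilon>0$ I pick a finite $\epsilon$-net $\{v_1,\dots,v_N\}\subset H^1$. Each $v_j$ individually has vanishing tails in $L^2$ and $\dot H^1$ by absolute continuity of the integral, and in $L^{\beta+1}$ by the Sobolev embedding $H^1(|x|\ge R)\hookrightarrow L^{\beta+1}(|x|\ge R)$. Hence there is $R_0=R_0(\epsilon)$ such that each $v_j$ has tail less than $\epsilon$ in each of the three norms on $\{|x|\ge R_0\}$. A triangle inequality, combined with $\|u(t)-v_j\|_{L^{\beta+1}}\lesssim\|u(t)-v_j\|_{H^1}<\epsilon$, transfers this to every $u(t)\in K$ with a constant independent of $t$.

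For the lower bound, mass conservation and $\psi\neq 0$ (which holds because $\mathcal{E}(\psi)=\mathcal{E}_{crit}>0$) give $\|u(t)\|_{L^2}\gtrsim\|\psi\|_{L^2}>0$ uniformly in $t$, which handles \eqref{eq:comcr1} immediately by choosing $R$ so that the $L^2$-tail is smaller than $\sigma/2$ times the mass lower bound. For \eqref{eq:comcr2}, I want to show that
\begin{equation*}
Q(t):=\|u_x(t)\|_{L^2}+\frac{2}{\beta+1}\|u(t)\|_{L^{\beta+1}}
\end{equation*}
admits a uniform positive lower bound $\mu>0$. I argue by contradiction using compactness: if $Q(t_n)\to 0$ along some sequence, extracting $u(t_n)\to w$ in $H^1$ via Proposition~\ref{pro:compact} forces $w_x=0$ in $L^2$ and $\|w\|_{L^{\beta+1}}=0$, hence $w\equiv 0$; but then $\|u(t_n)\|_{L^2}\to 0$, contradicting the mass lower bound. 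With $\mu>0$ in hand, choose $R$ large enough that the sum of tails appearing on the left of \eqref{eq:comcr2} is at most $\sigma\mu/2$; then the inside part is at least $\mu-\sigma\mu/2\geq\mu/2$, and rescaling gives the desired inequality.

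The main obstacle is ensuring the tail-decay argument transfers cleanly to all three norms with a single $R_0$; the $L^2$ and $\dot H^1$ parts are automatic from $H^1$-precompactness, while the $L^{\beta+1}$ part relies on the short embedding argument above. The positive lower bound for $Q$ in \eqref{eq:comcr2} is the other subtle point, but the compactness-plus-contradiction argument handles it cleanly once one notes that $Q$ vanishes on $H^1$ only at $w=0$.
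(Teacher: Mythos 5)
Your proof is correct and rests on the same two pillars as the paper's: precompactness of the orbit from Proposition~\ref{pro:compact}, plus the mass/energy conservation laws that force the limit of $u(t_n)$ to be nontrivial. The organization differs: the paper argues directly by contradiction — suppose the inequality fails along sequences $R_n\uparrow\infty$, $t_n$; extract $u(t_n)\to\phi\neq 0$; pass to the limit and observe that the tail $\|\phi\|_{L^2(|x|\ge R_n)}$ cannot dominate $\|\phi\|_{L^2(|x|\le R_n)}$ once $R_n$ is large. You instead split the argument into two positive statements (a uniform tail-decay estimate from an $\epsilon$-net, and a uniform lower bound on $\|u(t)\|_{L^2}$ and on $Q(t)=\|u_x(t)\|_{L^2}+\tfrac{2}{\beta+1}\|u(t)\|_{L^{\beta+1}}$) and then combine them by a triangle inequality. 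Content-wise this is equivalent, and your version has the merit of isolating the uniform tail decay as a reusable lemma; the paper's version is shorter because it avoids stating the lower bound explicitly. Two small remarks: in your lower-bound step, $w_x=0$ alone already forces $w\equiv 0$ for $w\in H^1(\mathbb{R})$, so the $L^{\beta+1}$ information is redundant there; and the combination step for \eqref{eq:comcr2} implicitly uses the triangle inequality (not orthogonality) to bound the interior piece below by $Q(t)$ minus the tails — worth stating, since the displayed inequality in \eqref{eq:comcr2} is in terms of norms rather than squared norms.
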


\begin{proof}
  From Proposition \ref{pro:compact} and the continuous embedding
  $H^{1}(\mathbb{R}) \hookrightarrow X(\mathbb{R})$ where
  $X=\dot H^{1}$ or $X=L^{q}$,
  we see that the flow $\{u(t,\cdot)\}$ is compact also in
  $\dot H^{1}(\mathbb{R})$ and $L^{q}(\mathbb{R})$
  for all $q\in[2,\infty)$.

  Consider first \eqref{eq:comcr1}.
  Assume by contradiction that the claim \eqref{eq:comcr1}
  is false; this means we can find $\sigma_{0}\in(0,1]$
  and two sequences $R_{n}\uparrow \infty$ and
  $t_{n}\in \mathbb{R}$ such that for all $n$
  \begin{equation*}
    \|u(t_{n},\cdot)\|_{L^{2}(|x|\ge R_{n})}> 
    \sigma_{0} \|u(t_{n},\cdot)\|_{L^{2}(|x|\le R_{n})}.
  \end{equation*}
  By compactness, extracting a subsequence, we can assume
  that $u(t_{n})$ converges to a limit $\phi$ in $L^{2}$.
  By conservation of the $L^{2}$ norm of $u$, which is not
  null since $\mathcal{E}_{crit}>0$, we see that $\phi$ 
  is not 0. Given $0<\epsilon<\sigma_{0}\|\phi\|_{L^{2}}/4$,
  we can find $n_{\epsilon}$ such that
  $\|u(t_{n})-\phi\|_{L^{2}}<\epsilon$ for $n>n_{\epsilon}$
  and we get
  \begin{equation*}
    \|\phi\|_{L^{2}(|x|\ge R_{n})}\ge
    \|u(t_{n})\|_{L^{2}(|x|\ge R_{n})}-\epsilon\ge
    \sigma_{0} \|u(t_{n})\|_{L^{2}(|x|\le R_{n})}-\epsilon\ge
    \sigma_{0} \|\phi\|_{L^{2}(|x|\le R_{n})}-2\epsilon
  \end{equation*}
  which implies
  \begin{equation*}
    \textstyle
    \|\phi\|_{L^{2}(|x|\ge R_{n})}>
    \frac 12\sigma_{0} \|\phi\|_{L^{2}(|x|\le R_{n})}
  \end{equation*}
  which is in contradiction with $\phi\not\equiv0$,
  and this proves \eqref{eq:comcr1}. 

  A similar argument, using conservation of energy instead of
  conservation of mass, proves \eqref{eq:comcr2}.
\end{proof}

\section{Dispersive properties of the flow}\label{sec:5}

In order to prove that the flow $e^{-itA}$ satisfies
assumptions \textbf{A1--A6, B1--B3}, the essential tool will
be the boundedness of the wave operator proved in
\cite{DAnconaFanelli06-a}. We recall that given
two selfadjoint operators $H,H_{0}$ on (say) $L^{2}(\mathbb{R})$,
the \emph{wave operators} relating the corresponding
unitary groups are defined as
\begin{equation*}
  W_{\pm}(H_{0},H)f=L^{2}-\lim_{s\to\pm \infty}
  e^{isH}e^{-is H_{0}}f.
\end{equation*}
The relevant property of $W_{\pm}$ for dispersive estimates
is the \emph{intertwining property}, which in our case
takes the form
\begin{equation}\label{eq:interw}
  W_{\pm}f(H_{0})W_{\pm}^{*}=f(H)P_{ac}
\end{equation}
valid for any Borel function $f$ on $\mathbb{R}$.
Here $P_{ac}$ represents the projection of $L^{2}(\mathbb{R})$
on the continuous spectrum of $H$. 

In particular, in \cite{DAnconaFanelli06-a} the case of
\begin{equation*}
  H_{0}=-\partial^{2}_{x},
  \qquad
  H=-\partial^{2}_{x}+V(x), \quad
  V\in L^{1}(\mathbb{R})
\end{equation*}
was considered. In this case, the wave operators are well
defined and bounded from $L^{2}(\mathbb{R})$ onto the
absolutely continuous subspace of $H$.
Note that under the assumption $V\in L^{1}$
the spectrum of $H=-\partial^{2}+V$ consists only of
the absolutely continuous spectrum $[0,+\infty)$
plus a finite number of negative eigenvalues; if we can
by some additional assumption rule out the negative
eigenvalues (e.g., if the operator $H$ is non negative)
then the spectrum of $H$ is purely absolutely continuous
and we can omit $P_{ac}=I$ from formula \eqref{eq:interw}.
The following result holds:

\begin{theorem}[\cite{DAnconaFanelli06-a}] \label{the:DFW}
  Assume $\bra{x}^{2}V\in L^{1}$. Then the wave operators
  $$W_{\pm}(-\partial^{2},-\partial^{2}+V)$$ extend to bounded
  operators on $L^{p}(\mathbb{R})$, for all $p\in(1,\infty)$.
\end{theorem}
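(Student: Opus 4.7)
The plan is to prove the theorem via a stationary representation of $W_\pm$ in terms of the Jost solutions of $H=-\partial_x^2+V$ and a high/low frequency decomposition, with the weight $\bra{x}^2$ used precisely to handle the threshold $k=0$.

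First, I would construct the Jost solutions $f_\pm(x,k)$ of $Hf=k^2 f$, normalized so that $f_\pm(x,k) e^{\mp ikx}\to 1$ as $x\to\pm\infty$. These are produced by solving the usual Volterra integral equations; the hypothesis $\bra{x}V\in L^1$ (which is weaker than $\bra{x}^2 V\in L^1$) is the classical condition guaranteeing that $m_\pm(x,k):=e^{\mp ikx}f_\pm(x,k)$ is continuous on $\mathbb{R}_x\times\overline{\mathbb{R}}_k$, bounded uniformly in $x$ and $k$, and of bounded variation in $x$. The extra moment $\bra{x}^2 V\in L^1$ furnishes one derivative in $k$ of $m_\pm$ with uniform bounds, a fact which will be crucial for the low-frequency analysis.

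Second, I would write the Agmon--Kato--Kuroda / stationary representation of $W_+$: using the distorted plane waves, one obtains a formula of the schematic form
\begin{equation*}
W_+f(x)=\frac{1}{\sqrt{2\pi}}\int_{0}^{\infty}\!\! T(k)m_+(x,k)e^{ikx}\widehat f(k)\,dk+\frac{1}{\sqrt{2\pi}}\int_{0}^{\infty}\!\! R_-(k)m_-(x,k)e^{-ikx}\widehat f(k)\,dk
\end{equation*}
on the positive frequency half (with the symmetric expression for $k<0$), where $T(k)$ and $R_\pm(k)$ are the transmission and reflection coefficients, expressed through the Wronskian $W(k)=W(f_+(\cdot,k),f_-(\cdot,k))$. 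The $L^p$-boundedness of $W_+$ thereby reduces to showing that each of the frequency-dependent multipliers, composed with multiplication by the bounded function $m_\pm(x,k)$, defines a bounded operator on $L^p(\mathbb{R})$ for $1<p<\infty$.

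Third, I would split dyadically into a high-frequency region $|k|\gtrsim 1$ and a low-frequency region $|k|\lesssim 1$. On high frequencies the Born/Neumann series for $m_\pm$ converges with quantitative decay in $k$, and $T(k)=1+O(|k|^{-1})$, $R_\pm(k)=O(|k|^{-1})$; the resulting operator has a Calder\'on--Zygmund / Mikhlin-type kernel (after standard oscillatory integral manipulations) and is bounded on $L^p$ for $1<p<\infty$. On low frequencies the delicate issue is the behaviour of $1/W(k)$ as $k\to 0$: in the generic case $W(0)\neq 0$ and everything is smooth, but in the exceptional (zero-energy resonance) case $W(k)$ vanishes linearly, so $T(k)\sim k/(k-ic)$ and $R_\pm(k)\to -1$. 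Here one exploits the cancellation between the $R_+$ and $R_-$ contributions: the singular parts of the two reflection terms combine into a Hilbert-transform-type operator acting on $\widehat f$, which is bounded on $L^p$. The $\bra{x}^2 V\in L^1$ hypothesis is needed to justify differentiating $m_\pm(x,k)$ in $k$ uniformly in $x$, so that the error terms in the cancellation argument have enough regularity to be treated by a Mikhlin-type multiplier bound.

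The hard part will be this low-frequency resonant cancellation. Everything else (existence of Jost solutions, high-frequency Neumann series, reduction to multiplier estimates) is fairly routine from $V\in L^1$ with one moment; the quadratic weight $\bra{x}^2$ enters only to produce enough smoothness in $k$ of the ``scattering data'' near the threshold to make the cancellation between $T, R_+, R_-$ quantitatively effective and $L^p$-compatible.
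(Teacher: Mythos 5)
What you should notice first is that this statement carries the citation key \cite{DAnconaFanelli06-a}: it is quoted from D'Ancona--Fanelli, \emph{Comm.\ Math.\ Phys.}\ \textbf{268} (2006), and the present paper offers no proof of it at all. The authors simply invoke the $L^p$-boundedness of the wave operator as an external input in Section~\ref{sec:5}, feeding it through the intertwining relation \eqref{eq:interw} to obtain the dispersive estimate \eqref{eq:dispV} and hence the Strichartz estimates. So there is no ``paper's own proof'' to compare your attempt against, and you should not try to reprove it here.

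That said, as a reconstruction of the argument in the cited reference (which refines earlier work of Weder \cite{Weder99-b}), your outline captures the right skeleton: Jost solutions $f_\pm$, the modified functions $m_\pm(x,k)=e^{\mp ikx}f_\pm(x,k)$ with uniform bounds and one $k$-derivative supplied by the moment conditions on $V$, the stationary representation of $W_\pm$ through the scattering data $T,R_\pm$, a dyadic high/low energy decomposition, the Born series plus Mikhlin-type multiplier estimates at high energy, and a delicate analysis near $k=0$. One factual slip in your low-frequency paragraph is worth flagging: you have exchanged the two threshold regimes. In the \emph{generic} case $W(0)\neq 0$, so $T(k)=\tfrac{2ik}{W(k)}\to 0$ and $R_\pm(0)=-1$; the jump of the reflection coefficients to $-1$ is precisely what produces the Hilbert-transform-type singular piece that must be handled by cancellation. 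In the \emph{exceptional} (zero-energy resonance) case $W(k)$ vanishes linearly, which cancels the factor $k$ so that $T(0)$ is a \emph{nonzero} constant and $|R_\pm(0)|<1$. The quadratic weight $\bra{x}^2V\in L^1$ is what buys the extra $k$-regularity of $m_\pm$ and the scattering coefficients needed to treat the resonant case; in the generic case $\bra{x}V\in L^1$ already suffices, and Theorem~\ref{the:DFW} is simply stated under the stronger hypothesis so as to cover both possibilities at once.
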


If $P_{ac}=I$, combining the previous result with
the intertwining property \eqref{eq:interw} and the
corresponding properties of the free group
$e^{it \Delta}$, we obtain immediately the dispersive
estimates
\begin{equation}\label{eq:dispV}
  \|e^{-itH}f\|_{L^{p}}\lesssim|t|^{\frac 1p-\frac 12}
  \|f\|_{L^{p'}},
  \qquad
  p\in[2,\infty).
\end{equation}
From \eqref{eq:dispV}, by standard methods one gets
the full set of Strichartz estimates
\begin{equation}\label{eq:strichV}
  \textstyle
  \|e^{-itH}f\|_{L^{a}L^{b}}\lesssim
  \|f\|_{L^{2}},
  \qquad
  \|\int_{0}^{t}e^{-i(t-s)H}F(s)ds\|_{L^{a}L^{b}}\lesssim
  \|F\|_{L^{\widetilde{a'}}L^{\widetilde{b'}}}
\end{equation}
for all admissible couples $(a,b)$ and 
$(\widetilde{a},\widetilde{b})$, satisfying the usual
condition $\frac 2a+\frac 1b=\frac 12$ and similarly for
$(\widetilde{a},\widetilde{b})$.

We shall also need Strichartz estimates for the first
derivative of the solution:
\begin{equation}\label{eq:strichVder}
  \textstyle
  \|e^{-itH}f\|_{L^{a}H^{1}_{b}}\lesssim
  \|f\|_{H^{1}},
  \qquad
  \|\int_{0}^{t}e^{-i(t-s)H}F(s)ds\|_{L^{a}H^{1}_{b}}\lesssim
  \|F\|_{L^{\widetilde{a'}}H^{1}_{\widetilde{b}'}}.
\end{equation}
To prove the first of \eqref{eq:strichVder}, we apply 
\eqref{eq:strichV} to the initial data
$(k+H)^{1/2}f$ and we get
\begin{equation}\label{eq:str1}
  \|(k+H)^{1/2}e^{itH}f\|_{L^{a}L^{b}}\lesssim
  \|(k+H)f\|_{L^{2}}\simeq_{k}\|f\|_{H^{1}}
\end{equation}
for some fixed constant $k>0$.
Next, we use the Riesz transform estimate
\begin{equation}\label{eq:ausch}
  \|\partial_{x} u\|_{L^{b}}\lesssim\|(k+H)^{1/2}f\|_{L^{b}},
  \qquad 1<b<\infty
\end{equation}
which is proved e.g.~in Theorem 1.2 of
\cite{AuscherBen-Ali07-a}. Estimate
\eqref{eq:ausch} is valid provided the potential $1+V$
in the operator $1+H=-\partial^{2}+(k+V)$ belongs to the
reverse H\"{o}lder class $RH_{b/2}$, defined by the
condition: for all intervals $I \subset \mathbb{R}$,
\begin{equation*}
  \textstyle
  (\fint_{I}(k+V)^{p/2})^{2/b}\lesssim\fint_{I} (k+V).
\end{equation*}
The reverse H\"{o}lder condition is trivially true provided
$k$ is large enough and $V$ is bounded.
Combining \eqref{eq:str1} and \eqref{eq:ausch} we obtain
the first of \eqref{eq:strichVder}.
The nonhomogeneous estimate in \eqref{eq:strichVder} is
proved in a similar way, or more directly via a standard
application of the Christ--Kiselev Lemma.

Consider now the flow $e^{-itA}$ corresponding to the operator
$A$. Applying to the equation the gauge transform
\begin{equation*}
  \textstyle
  w(t,x)=e^{-iB(x)}u(t,x),
  \qquad
  B(x)=\int_{0}^{x}b(y)dy
\end{equation*}
we obtain
\begin{equation*}
  w_{x}=e^{-iB(x)}\partial_{b}u,
  \qquad
  (aw_{x})_{x}=
  e^{-iB(x)}\partial_{b}(a \partial_{b}u),
  \qquad
  c(x)w=e^{-iB(x)}c(x)u.
\end{equation*}
Hence, if $u$ is a solution of
\begin{equation*}
  iu_{t}-Au=F,\qquad
  u(0,x)=\psi(x),
\end{equation*}
multiplying the equation by $e^{-iB(x)}$ and
working with $w=e^{-iB(x)}u$, we see that $w$ satisfies the
equation
\begin{equation*}
  iw_{t}+(a(x)\partial_{x}u)_{x}-c(x)w=e^{-iB(x)}F.
\end{equation*}
Next, define a new function $v(t,x)$ via the change of variables
\begin{equation}\label{eq:cdv}
  \textstyle
  w(t,x)=a(x)^{-\frac 14}v(t,\alpha(x)),
  \qquad
  \alpha(x)=\int_{0}^{x}\frac{ds}{a(s)^{1/2}}.
\end{equation}
If we assume $a(x)$ lies between two positive constants
$0<a_{0}\le a(x)\le a_{1}$, the new independent variable
$\alpha(x)\simeq x$. Then $v$ satisfies the equation
\begin{equation*}
  iv_{t}+v_{xx}-\widetilde{c}v=\widetilde{F},
  \qquad
  v(0,x)=\widetilde{\psi}(x)
\end{equation*}
where $\widetilde{F}$, $\widetilde{\psi}$ are determined by
\begin{equation*}
  e^{-iB(x)}F(t,x)=a^{\frac 14}\widetilde{F}(t,\alpha(x)),
  \qquad
  \psi(x)=a^{\frac 14}\widetilde{\psi}(\alpha(x)).
\end{equation*}
and the new potential $\widetilde{c}(x)$ is given by
\begin{equation*}
  \widetilde{c}(\alpha(x))=
  c(x)+\frac{a_{x}^{2}}{16a}-\frac{a_{xx}}4.
\end{equation*}
Note that since $A$ is non negative and
has a purely continuous spectrum,
the operator $-\partial^{2}+\widetilde{c}$ has the same
properties. Thus if we assunme
\begin{equation}\label{eq:assctil}
  c(x)+\frac{a_{x}^{2}}{16a}-\frac{a_{xx}}4
  \in \bra{x}^{-2} L^{1}\cap L^{\infty}
\end{equation}
we see that $v$ satisfies the dispersive estimate
\eqref{eq:dispV} and the Strichartz estimates
\eqref{eq:strichV} and \eqref{eq:strichVder}.
Coming back to $w(t,x)$ via the inverse of transformation
\eqref{eq:cdv}, we see that $w(t,x)$ satisfies the same
estimates. Finally, reintroducting the magnetic potential
$b(x)$ and noticing that for all $q$
\begin{equation*}
  \|e^{iB(x)}f\|_{H^{1}_{q}}\lesssim(1+\|b\|_{L^{\infty}}) 
  \|f\|_{H^{1}_{q}}
\end{equation*}
we obtain the following result:

\begin{proposition}\label{pro:strichA}
  Assume $a_{1}\ge a(x)\ge a_{0}>0$, $b(x)\in L^{\infty}$,
  $c(x)\ge0$ and \eqref{eq:assctil}.
  Then the flow $e^{-itA}$ satisfies the dispersive estimate
  \begin{equation}\label{eq:dispA}
    \|e^{-itA}f\|_{L^{p}}\lesssim|t|^{\frac 1p-\frac 12}
    \|f\|_{L^{p'}},
    \qquad
    p\in[2,\infty)
  \end{equation}
  and, for all admissible couples $(a,b)$ and 
  $(\widetilde{a},\widetilde{b})$, the Strichartz estimates
  \begin{equation}\label{eq:strichA}
    \textstyle
    \|e^{-itA}f\|_{L^{a}L^{b}}\lesssim
    \|f\|_{L^{2}},
    \qquad
    \|\int_{0}^{t}e^{-i(t-s)A}F(s)ds\|_{L^{a}L^{b}}\lesssim
    \|F\|_{L^{\widetilde{a'}}L^{\widetilde{b'}}}
  \end{equation}
  \begin{equation}\label{eq:strichAder}
    \textstyle
    \|e^{-itA}f\|_{L^{a}H^{1}_{b}}\lesssim
    \|f\|_{H^{1}},
    \qquad
    \|\int_{0}^{t}e^{-i(t-s)A}F(s)ds\|_{L^{a}H^{1}_{b}}\lesssim
    \|F\|_{L^{\widetilde{a'}}H^{1}_{\widetilde{b}'}}.
  \end{equation}
\end{proposition}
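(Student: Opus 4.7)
The plan is to follow the reduction sequence that is already laid out in the discussion preceding the statement, and simply fit the ingredients together in order. Concretely, I would proceed in four stages.

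First, I would remove the magnetic term by the gauge transform $w(t,x)=e^{-iB(x)}u(t,x)$ with $B(x)=\int_0^x b(y)\,dy$. A direct computation (already recorded above) shows that $iu_t-Au=F$ is equivalent to $iw_t+(a(x)w_x)_x-c(x)w=e^{-iB(x)}F$. Since $b\in L^\infty$, the gauge factor $e^{\pm iB(x)}$ is bounded and multiplication by it preserves every $L^q$ norm and, as noted in the excerpt, preserves $H^1_q$ up to a constant depending on $\|b\|_{L^\infty}$. So at the end of the argument I only have to transfer estimates between $u$ and $w$.

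Second, I would apply the Liouville-type change of variables \eqref{eq:cdv}, $w(t,x)=a(x)^{-1/4}v(t,\alpha(x))$ with $\alpha(x)=\int_0^x a(s)^{-1/2}\,ds$. Since $0<a_0\le a\le a_1$, the map $x\mapsto\alpha(x)$ is a bi-Lipschitz change of variable on $\mathbb{R}$, $\alpha(x)\simeq x$, and the weight $a^{-1/4}$ is bounded above and below. Hence pushforward by this change of variable is an isomorphism on $L^q(\mathbb{R})$ and on $H^1_q(\mathbb{R})$ for every $q\in(1,\infty)$, with constants depending only on $a_0,a_1$. The new unknown $v$ solves $iv_t+v_{xx}-\widetilde c v=\widetilde F$ with $\widetilde c(\alpha(x))=c(x)+\frac{a_x^2}{16a}-\frac{a_{xx}}{4}$. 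Under hypothesis \eqref{eq:assctil}, and using $\alpha(x)\simeq x$, this potential satisfies $\langle x\rangle^2\widetilde c\in L^1$; moreover $-\partial_x^2+\widetilde c$ is non negative (it is unitarily equivalent via the two transforms above to $A\ge 0$), so its spectrum is purely absolutely continuous and $P_{ac}=I$.

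Third, for the operator $H:=-\partial_x^2+\widetilde c$, Theorem \ref{the:DFW} gives $L^p$-boundedness of the wave operators $W_\pm(-\partial^2,H)$ for all $p\in(1,\infty)$. Combining this with the intertwining identity \eqref{eq:interw} (with $P_{ac}=I$) and the standard $L^{p'}\to L^p$ dispersive bound for $e^{it\partial_x^2}$ yields \eqref{eq:dispV} for $e^{-itH}$, and from there the usual $TT^*$/Keel--Tao machinery plus the Christ--Kiselev lemma gives the full admissible Strichartz family \eqref{eq:strichV} for $v$. To upgrade to the $H^1_b$ level, I would apply \eqref{eq:strichV} to the datum $(k+H)^{1/2}f$ as in \eqref{eq:str1}, and then invoke the Riesz transform bound \eqref{eq:ausch} of Auscher--Ben Ali; this requires the reverse Hölder condition $RH_{b/2}$ for $k+V=k+\widetilde c$, which is trivially satisfied once $k$ is taken large enough, since $\widetilde c$ is bounded under \eqref{eq:assctil}.

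Fourth, I would transfer everything back. Unwinding the change of variables turns the estimates on $v$ into the corresponding estimates on $w$, and then multiplication by $e^{iB(x)}$ turns these into estimates on $u=e^{iB(x)}w$. Both operations are bounded isomorphisms on $L^q$ and $H^1_q$ for $q\in(1,\infty)$, so \eqref{eq:dispA}, \eqref{eq:strichA}, and \eqref{eq:strichAder} follow. The main technical obstacle is the one already highlighted in the paragraph before the proposition: checking that the transformed potential $\widetilde c$ meets the weighted $L^1$ and reverse Hölder hypotheses needed by the two black-box results (Theorem \ref{the:DFW} and the Auscher--Ben Ali Riesz transform estimate). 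This is exactly where assumption \eqref{eq:assctil}, and in turn \eqref{eq:assabc}, is used; the rest of the proof is essentially bookkeeping through the two explicit transformations.
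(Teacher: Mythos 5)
Your proposal is correct and follows essentially the same route the paper itself takes: gauge away $b$, apply the Liouville change of variables \eqref{eq:cdv} to reduce to $-\partial_x^2+\widetilde c$, invoke Theorem~\ref{the:DFW} together with the intertwining identity to get the dispersive estimate, derive admissible Strichartz by $TT^*$, and upgrade to $H^1_b$ via \eqref{eq:str1} and the Auscher--Ben Ali Riesz transform bound, then transfer back through the two bounded isomorphisms. Nothing is missing.
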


As usual, the $L^{a}L^{b}$ norms in
\eqref{eq:strichA}, \eqref{eq:strichAder}
can be restricted to $L^{a}_{I}L^{b}$ norms 
on an arbitrary time interval $I$ containing 0.

\section{The virial identity}\label{sec:6}

In Sections \ref{sec:2} to \ref{sec:4}
we have proved the existence of a
critical solution, and the compactness of its flow,
for an arbitrary operator $A$ satisfying the abstract
assumptions \textbf{A1}--\textbf{A6} and \textbf{B1}--\textbf{B3},
under the hypothesis $0<\mathcal{E}_{crit}<\infty$.
We can repackage this part of the argument as follows:
suppose a non--scattering solution exists, which is equivalent
to $0<\mathcal{E}_{crit}<\infty$. Then we can construct
a solution with energy $\mathcal{E}_{crit}$ whose flow is
relatively compact in $H^{1}$.
We shall now check that the operator \eqref{eq:definA} satisfies
the abstract assumptions
\textbf{A1}--\textbf{A6}, \textbf{B1}--\textbf{B3},
and this will imply Theorem \ref{the:critA}
as a consequence.

We apply to the equation a gauge transform
\begin{equation}\label{eq:gauge}
  \textstyle
  w(t,x)=e^{-iB(x)}u(t,x),
  \qquad
  B(x)=\int_{0}^{x}b(y)dy
\end{equation}
which gives the reductions
\begin{equation*}
  w_{x}=e^{-iB(x)}\partial_{b}u,
  \qquad
  (aw_{x})_{x}=
  e^{-iB(x)}\partial_{b}(a \partial_{b}u),
  \qquad
  c(x)w=e^{-iB(x)}c(x)u
\end{equation*}
and $F(w)=e^{-iB(x)}F(u)$, where $F(z)=|z|^{\beta-1}z$.
Hence, multiplying the equation by $e^{-iB(x)}$ and
working with $w=e^{-iB(x)}u$, we see that $w$ satisfies the
equation
\begin{equation*}
  iw_{t} +(a(x)\partial_{x}u)_{x} - c(x)w=F(w).
\end{equation*}
Note that $w$ scatters if and only if $u$ scatters.
This shows that we can assume $b \equiv0$. 
From now on, we take $b \equiv0$ and write $u$ instead of $w$.

\begin{proposition}\label{pro:checkA}
  If the coefficients $a,b,c$ satisfy \eqref{eq:assabc}
  then the operator $A$ satisfies Assumptions
  \textbf{A1}--\textbf{A6} and \textbf{B1}--\textbf{B3},
  with the choice $A_{\infty}=-\partial_{x}^{2}$.
\end{proposition}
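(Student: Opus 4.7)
The plan is to verify each listed assumption in turn for $A_z v = -\partial_x(a(x-z)\partial_x v) + c(x-z)v$ (after the gauge reduction already performed in the preceding text). Assumption \textbf{A1} is immediate: the quadratic form $(A_z v, v) = \int a(x-z)|v_x|^2 + c(x-z)|v|^2$ is non-negative, bounded on $H^1$ since $a,c\in L^\infty$, and the uniform equivalence $(A_z v, v) + c_0\|v\|_{L^2}^2 \simeq \|v\|_{H^1}^2$ follows from $a \ge a_0 > 0$ with translation-invariant constants. \textbf{A2} is the standard unitary preservation of the energy-equivalent norm, and \textbf{B2} is the classical mass and energy conservation for the defocusing NLS obtained by pairing the equation with $\bar u_t$ and $\bar u$ respectively. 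For \textbf{B1}, Proposition \ref{pro:strichA} gives Strichartz estimates for $e^{-itA}$ under the hypothesis \eqref{eq:assctil}, which follows from \eqref{eq:assabc}; the translated flow $e^{-itA_z} = \tau_{-z}e^{-itA}\tau_z$ then satisfies the same estimates with identical constants by translation invariance of Lebesgue norms.

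For \textbf{A3} I would write $(A_{x_k} - A_\infty)\psi = -\partial_x((a(x-x_k)-1)\psi_x) + c(x-x_k)\psi$; for $|x_k|\to\infty$ both $a(\cdot - x_k) - 1 \to 0$ and $c(\cdot - x_k) \to 0$ pointwise with the uniform bounds from \eqref{eq:assabc}, so dominated convergence yields $H^{-1}$-convergence, while for bounded $x_k$ one passes to a subsequence $x_k\to\bar x$ and compares to $A_{\bar x}$. For \textbf{A4} and \textbf{A5}, when $s_k$ is bounded I extract $s_k\to\bar s$ and employ the Duhamel identity
\begin{equation*}
e^{-is A_{x_k}}\psi - e^{-is A_\infty}\psi = -i\int_0^s e^{-i(s-\tau)A_{x_k}}(A_{x_k}-A_\infty)e^{-i\tau A_\infty}\psi\,d\tau,
\end{equation*}
together with the Strichartz estimates for $A_{x_k}$ (uniform in $k$) and the $H^{-1}$-convergence from \textbf{A3}, obtaining strong $H^1$ convergence (with the obvious modification to $A_{\bar x}$ when $x_k\to\bar x$); when $|s_k|\to\infty$ the dispersive estimate \eqref{eq:dispA} gives $\|e^{-is_k A_{x_k}}\psi\|_{L^r}\to 0$ directly (after approximating $\psi\in H^1$ by Schwartz data and using translation invariance of $\|\cdot\|_{L^{r'}}$), which establishes \textbf{A5}. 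Assumption \textbf{A6} then follows, because the sequence is bounded in $H^1$ by \textbf{A2}, any $H^1$-weak limit is the $L^r_{\mathrm{loc}}$ limit by Rellich, and the $L^r$-strong limit is $0$, forcing the weak $H^1$ limit to vanish.

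The main obstacle is \textbf{B3}. The strategy is again a Duhamel expansion
\begin{equation*}
(e^{-itA_\infty} - e^{-itA_{x_n}})\psi = -i\int_0^t e^{-i(t-s)A_{x_n}}(A_\infty - A_{x_n})e^{-isA_\infty}\psi\,ds,
\end{equation*}
to which one applies the Strichartz estimate for $e^{-itA_{x_n}}$ (uniform in $n$). The integrand splits into a divergence-form piece $\partial_x((a(\cdot-x_n)-1)\partial_x e^{-isA_\infty}\psi)$, which lies naturally in $L^{p'}H^{-1}_{r'}$ (the dual of the $L^pH^1_r$ Strichartz space), and a zeroth-order piece $c(\cdot-x_n)e^{-isA_\infty}\psi \in L^{p'}L^{r'}$. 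Both go to $0$ in their respective dual Strichartz norms by dominated convergence applied to the pointwise limits $a(\cdot-x_n)-1, c(\cdot-x_n)\to 0$, using the free flow bound $e^{-isA_\infty}\psi\in L^pH^1_r$. The inhomogeneous claim \eqref{eq:asystrin} is treated identically, replacing $e^{-isA_\infty}\psi$ by $\int_{-\infty}^s e^{-i(s-\sigma)A_\infty}F\,d\sigma$ (whose $L^pH^1_r$ bound comes from \textbf{B1} for $A_\infty$ plus a Christ--Kiselev argument for the truncated time integral). The delicate point is precisely the divergence-form error at the derivative level, and it is here that the full $H^1$-valued Strichartz estimate of Proposition \ref{pro:strichA}, rather than only its $L^2$-version, is indispensable.
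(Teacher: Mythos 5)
Your overall plan mirrors the paper's: gauge out $b$, verify \textbf{A1--A2}, \textbf{B1--B2} by standard energy and unitarity arguments, reduce \textbf{A3--A6} and \textbf{B3} to continuity of translations and a Duhamel expansion of the difference of two flows, and use the dispersive estimate for the large-time cases. Where you differ is in the treatment of the Duhamel source term in \textbf{A4} and \textbf{B3}, and there you have a genuine gap.

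You propose to feed the source $(A_{x_k}-A_\infty)e^{-i\tau A_\infty}\psi$ into the Strichartz inequalities using only the $H^{-1}$ convergence proved in \textbf{A3}, and in \textbf{B3} you claim that the divergence-form error $\partial_x\bigl((a(\cdot-x_n)-1)\partial_x e^{-isA_\infty}\psi\bigr)$ can be measured in $L^{p'}H^{-1}_{r'}$ and handled by the \textquotedblleft full $H^1$-valued Strichartz estimate.\textquotedblright\ But none of the estimates listed in \textbf{B1} (nor in Proposition \ref{pro:strichA}) maps $L^{p'}H^{-1}_{r'}$ to $L^pL^r$: the inhomogeneous Strichartz estimates available are $L^{p'}L^{r'}\to L^pL^r$ and $L^{p'}H^1_{r'}+L^1H^1\to L^pH^1_r$. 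What you are invoking is a derivative-gain (Kato-smoothing-type) inhomogeneous Strichartz estimate, which would have to be proved separately for the variable-coefficient operator and is not part of the framework. Moreover, even if one rewrites the source without the outer $\partial_x$ as $(1-a(\cdot-x_n))v_{xx}-a_x(\cdot-x_n)v_x+c(\cdot-x_n)v$, a global-in-time bound in $L^{p'}L^{r'}$ is problematic because the coefficient $a-1$ does not belong to $L^{r'}$ (it only decays like $\bra x^{-1/2}$) and the pointwise spatial decay of $e^{-isA_\infty}\psi$ deteriorates as $|s|\to\infty$.

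The paper resolves both difficulties in the same way, and your argument should be patched accordingly: first split the time axis into $[0,T_\epsilon]$ and $[T_\epsilon,\infty)$, disposing of the tail by the uniform dispersive decay of \emph{both} flows; then on the compact interval, take $\psi$ Schwartz (legitimate by density and \textbf{A2}), note that $e^{-isA_\infty}\psi$ stays in a bounded set of $\mathscr{S}$ for $s\in[0,T_\epsilon]$, and estimate the expanded source in the dual admissible norm $L^{4/3}_{T}W^{1,1}$ associated with the endpoint pair $(4,\infty)$. The $L^1_x$ estimate of each term pairs the rapidly decaying derivatives of the free solution against the translated coefficients $a-1,a_x,a_{xx},c,c_x$, and vanishing as $x_n\to\pm\infty$ follows. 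This both supplies the regularity that the $H^{-1}$ convergence of \textbf{A3} lacks and avoids the need for any smoothing Strichartz estimate. The rest of your proposal (\textbf{A1, A2, A5, A6, B1, B2}, and the structure of \textbf{A3}) is correct and essentially identical to the paper's.
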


\begin{proof}
  We know already that under \eqref{eq:assabc} the operator
  $A$ is selfadjoint, non negative, with
  $D(A)=H^{2}(\mathbb{R})$. In addition, the assumptions
  of Proposition \ref{pro:strichA} are satisfied, hence
  both the dispersive estimate \eqref{eq:dispA} and
  the Strichartz estimates \eqref{eq:strichA}, 
  \eqref{eq:strichAder} are valid.

  (\textbf{A1}).
  We have easily
  \begin{equation*}
    (Av,v)\le
    \|a\|_{L^{\infty}}\|\partial v\|_{L^{2}}^{2}+
    \|c\|_{L^{1}}\|v\|_{L^{\infty}}^{2}
    \lesssim \|v\|_{H^{1}}^{2}
  \end{equation*}
  while, using $a\ge a_{0}>0$ and $c\ge0$,
  \begin{equation*}
    (Av,v)\ge a_{0}\|\partial v\|_{L^{2}}^{2}
  \end{equation*}
  from which it follows that
  \begin{equation*}
    (Av, v)_{L^2} + \|v\|_{L^2}^2 \simeq \|v\|_{H^1}^2.
  \end{equation*}
  Since
  \begin{equation*}
    (A_zv, v)_{L^2} = (A \tau_z v, \tau_z v)
  \end{equation*}
  by the translational invariance of the $L^2, H^1$ norms
  we conclude
  \begin{equation*}
    (A_zv, v)_{L^2} + \|v\|^2_{L^2} \simeq \|v\|_{H^1}^2.
  \end{equation*}
  
  (\textbf{A2}). 
  This follows by the standard properties of the unitary group:
  \begin{equation*}
    \|e^{-itA}\psi\|_{H^1}^2 \simeq
    (A e^{-itA} \psi ,e^{-itA} \psi)_{L^2} 
    + \|e^{-itA} \psi \|_{L^2}^{2}
    \lesssim \| \psi \|_{H^1}^{2}.
  \end{equation*}
  
  (\textbf{A3}). 
  Note that
  \begin{equation*}
    A_{z}\psi=-(a(x-z)\partial_{x} \psi)_{x}+c(x-z)\psi.
  \end{equation*}

  If $(x_{k})$ is bounded, up to a subsequence,
  we can assume that it converges to a real number;
  it is not restrictive to assume that $x_{k}\to0$.
  Then we can write
  \begin{equation*}
    \|(c(x-x_{k})-c(x))\psi(x)\|_{L^{1}}\le
    \|\tau_{x_{k}}c-c\|_{L^{1}}\|\psi\|_{L^{\infty}}\lesssim
    \|\tau_{x_{k}}c-c\|_{L^{1}}\|\psi\|_{H^{1}}.
  \end{equation*}
  Since $c\in L^{1}$ we have $\tau_{x_{k}}c\to c$ in $L^{1}$
  as $k\to \infty$ implying
  \begin{equation*}
    c(x-x_{k})\psi\to c(x)\psi
    \quad\text{in}\quad L^{1}
    \qquad\text{and hence in}\quad H^{-1}. 
  \end{equation*}
  Moreover we can write
  \begin{equation*}
    \|(a(x-x_{k})-a(x))\psi_{x}\|_{L^{2}}\le
    \|\tau_{x_{k}}a-a\|_{L^{\infty}}\|\psi\|_{H^{1}}.
  \end{equation*}
  From $a'\in L^{1}$ it follows that $a$ is uniformly
  continuous, implying $\tau_{x_{k}}a\to a$ uniformly on
  $\mathbb{R}$. We conclude that
  \begin{equation*}
    a(x-x_{k})\partial_{x}\psi\to a(x)\partial_{x}\psi
    \quad\text{in}\quad L^{2}
    \quad \Rightarrow \quad
    (a(x-x_{k})\partial_{x}\psi)_{x}\to (a(x)\partial_{x}\psi)_{x}
    \quad\text{in}\quad H^{-1}.
  \end{equation*}
  Summing up, we have proved 
  $A_{x_{k}}\psi\to A \psi$ in $H^{-1}$.

  If $(x_{k})$ is unbounded, up to a subsequence, we can assume
  that $x_{k}\to+\infty$ (the case of $-\infty$ is similar).
  If $\psi\in C_{c}(\mathbb{R})$ we have obviously
  $c(x-x_{k})\psi\to0$ in $L^{1}$ as $k\to \infty$
  since $c\in L^{1}$,
  and by density it follows that $c(x-x_{k})\psi\to0$ in $L^{1}$
  for arbitrary $\psi\in H^{1}$; this implies
  \begin{equation*}
    c(x-x_{k})\psi\to 0
    \quad\text{in}\quad H^{-1}.
  \end{equation*}
  By a similar argument, $a(x-x_{k})\psi_{x}\to \psi_{x}$
  in $L^{2}$ provided $a(x)\to 1$ as $x\to\pm \infty$, and this
  implies
  \begin{equation*}
    (a(x-x_{k})\partial_{x}\psi)_{x}\to \psi_{xx}
    \quad\text{in}\quad H^{-1}.
  \end{equation*}
  Combining the two properties we obtain
  $A_{x_{k}}\psi\to \psi_{xx}$ in $H^{-1}$.

  (\textbf{A4}). 
  Again, it is sufficient to consider the two
  cases $x_{k}\to \bar{x}$, or $x_{k}\to+\infty$.
  In the case $x_{k}\to \bar{x}$, we claim that
  $e^{-is_k A_{x_k}}\psi \to e^{-i\bar s A_{\bar x}}\psi$ in $H^1$. 
  Indeed, since 
  $e^{-is_k A_{x_k}}\psi = \tau_{-x_k}e^{-is_k A}\tau_{x_k}\psi$ 
  and similarly for $e^{-i\bar s A_{\bar x}} \psi$, 
  by continuity of the translation operator we must only
  show that 
  $e^{-is_k A}\tau_{x_k}\psi \to e^{-i\bar s A}\tau_{\bar x}\psi$ 
  in $H^1$. We decompose \begin{align*}
  \textstyle
  e^{-is_k A}\tau_{x_k}\psi - e^{-i\bar s A}\tau_{\bar x}\psi &= (e^{-is_k A}\tau_{x_k}\psi - e^{-i s_k A}\tau_{\bar x}\psi) +( e^{-i s_k A}\tau_{\bar x}\psi - e^{-i\bar s A}\tau_{\bar x}\psi).
  \end{align*}
  For the first term, by \textbf{A2} we have
  \begin{equation*}
    \| e^{-is_k A}\tau_{x_k}\psi - e^{-i s_k A}
      \tau_{\bar x}\psi \|_{H^1} \lesssim 
    \| \tau_{x_k}\psi - \tau_{\bar x}\psi \|_{H^1} \to 0,
  \end{equation*}
  while for the second term we have,
  by the continuity of the flow map,
  \begin{equation*}
    \| e^{-i s_k A}\tau_{\bar x}\psi - e^{-i\bar s A}
    \tau_{\bar x}\psi \|_{H^1} \to 0.
  \end{equation*}

  In the second case $x_k \to + \infty$, we shall prove that
  $e^{-is_k A_{x_k}}\psi \to e^{i\bar s \partial^{2}_x}\psi$ in $H^1$.  
  Indeed, thanks to \textbf{A2}, it is sufficient to 
  consider $\psi \in C^\infty_0$ by a density argument.
  We can write
  \begin{equation*}
  \begin{split}
    \| e^{-is_k A_{x_k}}\psi &- e^{i\bar s \partial^{2}_{x}}\psi \|_{H^1}
    =
    \| e^{-is_k A} \tau_{x_k} \psi - e^{i\bar s \partial^{2}_{x}}
        \tau_{x_k}\psi \|_{H^1}
    \\
    \le&
    \| e^{-is_k A} \tau_{x_k} \psi - 
      e^{i s_{x_k} \partial^{2}_{x}}\tau_{x_k}\psi \|_{H^1} +
    \| e^{i s_{x_k} \partial^{2}_{x}}\tau_{x_k}\psi - 
    e^{i\bar s \partial^{2}_{x}}\tau_{x_k}\psi \|_{H^1}.
  \end{split}
  \end{equation*}
  The second term tends to $0$ by the continuity of the flow 
  $e^{it\partial^{2}_{x}}$; it remains to show that
  \begin{equation*}
  \textstyle
  \| e^{-is_k A} \tau_{x_k} \psi - 
    e^{i s_{x_k} \partial^{2}}\tau_{x_k}\psi \|_{H^1} \to 0.
  \end{equation*}

  The function 
  $u_{k}(t) = -( e^{-it A} \tau_{x_k} \psi 
    - e^{-i t \partial^{2}_{x}}\tau_{x_k}\psi )$ satisfies
  \begin{equation*}
    i\partial_tu_{k} + Au_{k} 
    = (A + \partial^{2}_{x})e^{-it\partial^{2}_{x}} \tau_{x_k}\psi,
    \qquad u_{k}(0) = 0.
  \end{equation*}
  We shall write $v(t,x)=e^{-it \partial^{2}_{x}}\psi$.
  Note that
  \begin{equation*}
    (A+\partial^{2}_{x})\tau_{x_k}v=(1-a) \tau_{x_k}v_{xx}
    -a_{x} \tau_{x_k}v_{x}+c \tau_{x_k}v=
    \tau_{x_{k}}R_{k}(t,x)
  \end{equation*}
  where
  \begin{equation*}
    R_{k}(t,x)=(1-a(x+x_{k}))v_{xx}-a_{x}(x+x_{k})v_{x}
    +c(x+x_{k})v.
  \end{equation*}
  Applying a Strichartz estimate \eqref{eq:strichAder} 
  on the time interval $I=[0,\bar{s}+1]$ we get
  \begin{equation}\label{eq:eninhom2}
    \|u_{k}\|_{L^{\infty}_{I}H^{1}}\lesssim
    \|R_{k}(t,x)\|_{L^{4/3}_{I}W^{1,1}}.
  \end{equation}
  Recall that if $\psi\in \mathscr{S}$ then 
  $v=e^{it \Delta}\psi\in \mathscr{S}$ for all times, and
  actually, for $t$ in a bounded interval,
  $e^{it \Delta}\psi$ remains in a bounded subset
  of $\mathscr{S}$. This follows easily by taking Fourier
  transform and noticing that multiplication
  by $e^{-it|\xi|^{2}}$ enjoys the same property.
  In particular, we have for arbitrary $N>0$
  \begin{equation*}
    |v(t,x)|+|v_{x}|+|v_{xx}|+|v_{xxx}|\le C_{N}\bra{x}^{-N},
    \qquad
    t\in I,\quad x\in \mathbb{R}.
  \end{equation*}
  Using this estimate, the term $R(t,x)$ can be estimated as
  \begin{equation}\label{eq:estbigR}
    \|R_{k}(t,x)\|_{L^{4/3}_{I}W^{1,1}}\lesssim
    \|\bra{x}^{-N}\tau_{-x_{k}}
    (|a-1|+|a_{x}|+|a_{xx}|+c+|c_{x}|)
      \|_{L^{1}(\mathbb{R})}.
  \end{equation}
  Since $a-1\to0$ as $x\to\pm \infty$ and
  $a_{x},a_{xx},c,c_{x}\in L^{1}(\mathbb{R})$, we obtain
  easily that the right hand side of \eqref{eq:estbigR} tends
  to 0 as $x_{k}\to\pm \infty$.
  
  (\textbf{A5}, \textbf{A6}). 
  If $(s_{k})$ is bounded, \textbf{A5} is a consequence of
  \textbf{A4}. If $(s_{k})$ is unbounded, passing to
  a subsequence it is sufficient to consider the case
  $s_{k}\to+\infty$. Then the dispersive estimates in 
  Proposition \ref{pro:strichA} implies
  \begin{equation*}
    \textstyle
    \|e^{-is_k A_{x_k}} \psi\|_{L^r} \lesssim 
    |s_k|^{\frac{1}{r} - \frac{1}{2}}\|\psi\|_{L^{r'}} \to 0
  \end{equation*}
  which is stronger than \textbf{A5}.
  The proof of \textbf{A6} is similar.
  
  (\textbf{B1}).
  The validity of Strichartz estimates is ensured by
  Proposition \ref{pro:strichA}
  
  (\textbf{B2}). Mass and energy conservation are proved
  via the standard identities
  \begin{align*}
    \textstyle
    \partial_t M(t) = 2 \Re(i \int u(A \bar u +
      |u|^{\beta -1} \bar u)) = 0,
  \end{align*}
  \begin{align*}
    \textstyle
    \partial_tE(t) = 2\Re(i\int (Au + |u|^{\beta - 1} u)
      \overline{(Au + |u|^{\beta -1}u)}) = 0.
  \end{align*}
  
  (\textbf{B3}). 
  We choose $A_\infty = - \partial^{2}_{x}$.  
  We start from equation (\ref{eq:asystr}), with 
  $x_n \to + \infty$. 
  By density we can assume $\psi \in C^\infty_0$. 
  Fix $\epsilon>0$. The dispersive estimate 
  \eqref{eq:dispA} implies
  \begin{equation*}
    \|e^{-itA} \tau_{x_n} \psi \|_{L^r} 
    \lesssim |t|^{\frac 1r-\frac 12}
    \| \psi \|_{L^{r'}}
  \end{equation*}
  uniformly in $x_{n}$. Since 
  $p\left(\frac 12-\frac1r\right) = 2$,
  the right hand side belongs to $L^p_t(1,\infty)$,
  and we can find $T_{\epsilon}>0$ such that
  \begin{equation*}
    \|e^{-itA} \tau_{x_n} \psi \|
    _{L^{p}([T_{\epsilon},\infty);L^r)}
    \le \epsilon/2
  \end{equation*}
  and a similar computation is valid for the flow
  $e^{it \partial^{2}_{x}}$. This gives for all $x_{n}$
  \begin{equation*}
    \|(e^{-itA}-e^{it \partial^{2}_{x}})\tau_{x_n} \psi
    \|_{L^{p}([T_{\epsilon},\infty); L^r)}
    \le \epsilon.
  \end{equation*}
  Thus, writing
  \begin{equation*}
    L^{p}_{T}L^{q}=L^{p}([0,T];L^{q}(\mathbb{R}))
  \end{equation*}
  it remains to show that for a fixed $T$
  \begin{equation*}
  \textstyle
  \| (e^{it \partial^{2}_{x}} -e^{-itA}) \tau_{x_n} \psi \|
    _{L^p_{T}L^r} \to 0 \quad\text{ as }\quad n \to \infty.
  \end{equation*}
  The function 
  $\varphi = (e^{it \partial^{2}_{x}} -e^{-itA}) \tau_{x_n} \psi$ 
  solves the problem
  \begin{equation*}
  \textstyle
    i\partial_t \varphi + A \varphi = 
    (A +\partial^{2}_{x}) e^{it\partial^{2}_{x}}\tau_{x_n} \psi,
    \qquad\varphi(0) = 0.
  \end{equation*}
  By Strichartz estimates (\ref{eq:strichA}) we have
  \begin{equation*}
    \| (e^{it\partial^{2}_{x}} -e^{-itA}) \tau_{x_n} \psi \|
     _{L^p_{T}L^r}\lesssim
    \| (A +\partial^{2}_{x}) e^{it\partial^{2}_{x}}\tau_{x_n} \psi\|
      _{L^{\frac{4}{3}}_{T}L^1}\lesssim
     T^{\frac{3}{4}} 
         \| (A + \partial^{2}_{x}) e^{it\partial^{2}_{x}}\tau_{x_n} \psi\|
         _{L^\infty_{T}L^1},
  \end{equation*}
  which can be treated as \eqref{eq:estbigR} above.

  The proof of \eqref{eq:asystrin} is similar.
  By density, we can take $F\in \mathscr{S}(\mathbb{R}^{n+1})$.
  Fix $\epsilon>0$. 
  By Strichartz estimates we have
  (with $L^{p}L^{r}=L^{p}(\mathbb{R}^{+};L^{r})$)
  \begin{equation*}
    \textstyle
    \|\int_{-\infty}^{-T}e^{-i(t-s)A}\tau_{x_{n}}Fds\|
    _{L^{p}L^{r}}
    \lesssim
    \|F\|_{L^{p'}((-\infty,-T];L^{r'})}\le \epsilon
  \end{equation*}
  for $T>0$ large enough, and a similar estimate holds for
  $e^{it \partial^{2}_{x}}$. To estimate the piece
  $\int_{-T}^{t}$, we use the dispersive estimate and write
  \begin{equation*}
    \textstyle
    \|\int_{-T}^{t}e^{-i(t-s)A}\tau_{x_{n}}Fds\|_{L^{r}_{x}}
    \le
    \int\|e^{-i(t-s)A}\tau_{x_{n}}F\|_{L^{r}}ds
    \lesssim
    \int|t-s|^{\frac 1r-\frac 12}\|F\|_{L^{r'}_{x}}ds.
  \end{equation*}
  By the Hardy--Sobolev inequality,
  $g(t)=|t|^{\frac 1r-\frac 12}*\|F\|_{L^{r'}}$
  belongs to $L^{p}(\mathbb{R})$, hence
  \begin{equation*}
    \textstyle
    \|\int_{-T}^{t}e^{-i(t-s)A}\tau_{x_{n}}Fds\|
    _{L^{p}([T,\infty));L^{r}_{x})}
    \lesssim
    \|g\|_{L^{p}([T,\infty))}\le \epsilon
  \end{equation*}
  provided $T$ is large enough;
  a similar estimate holds for the flow $e^{it\partial^{2}_{x}}$.
  It remains to estimate, for a fixed $T>0$ and
  for $t\in[0,T]$, the integral
  \begin{equation*}
    \textstyle
    \int_{-T}^{t}(e^{-i(t-s)A}-e^{i(t-s)\partial^{2}_{x}})
    \tau_{x_{n}}F(s)ds.
  \end{equation*}
  Changing variables $s'=s+T$, $t'=t+T$ this becomes
  \begin{equation*}
    \textstyle
    =\int_{0}^{t}(e^{-i(t-s)A}-e^{i(t-s)\partial^{2}_{x}})
    \tau_{x_{n}}\widetilde{F}(s)ds,
    \qquad
    \widetilde{F}(s)=F(s-T)
  \end{equation*}
  for $t\in[T,2T]$.
  The function 
  $\phi = \int_{0}^t( e^{-i(t-s)\partial^{2}_{x}} - e^{-i(t-s)A}) 
    \tau_{x_n}\widetilde{F} ds$ 
  solves the problem
  \begin{equation*}
    \textstyle
    i \partial_t \phi + A \phi = 
    (A +\partial^{2}_{x}) 
    \int_{0}^t e^{-i(t-s)\partial_{x}^{2}} \tau_{x_n}\widetilde{F} ds, 
    \qquad
    \phi(0) = 0.
  \end{equation*}
  By Strichartz estimates (\ref{eq:strichA}) we have then
  \begin{equation*}
  \begin{split}
    \textstyle
    \|\phi\|_{L^{p}([0,2T];L^{r})}\le
    & \textstyle
    \|(A+\partial^{2}_{x})\int_{0}^{t}e^{-i(t-s)\partial_{x}^{2}}
      \tau_{x_{n}}\widetilde{F}(s)ds\|
    _{L^{4/3}([0,2T];L^{1})}
    \\
    \le &
    T^{3/4}
    \|(A+\partial^{2})\tau_{x_{n}}\int_{0}^{t}e^{-i(t-s)\partial_{x}^{2}}
      \tau_{x_{n}}\widetilde{F}(s)ds\|
    _{L^{\infty}([0,2T];L^{1})}
    \end{split}
  \end{equation*}
  which can be treated as \eqref{eq:estbigR} since 
  $\int_{0}^t e^{-i(t-s)\partial_{x}^{2}} 
    \widetilde{F} ds  \in C([0,2T], H^1)$.
\end{proof}

This completes the proof ot Theorem \ref{the:critA}.

\begin{remark}\label{rem:disp}
  Examining the proof of Proposition \ref{pro:checkA},
  one checks that all properties can be derived from the
  dispersive estimate \eqref{eq:dispA}.
  Thus the same scheme works for other 1D operators $A$
  for which a dispersive estimate is available.
  This includes the Laplacian with a delta potential,
  studied in \cite{BanicaVisciglia16-a}, and the
  Schr\"{o}dinger operator with a potential studied in
  \cite{Lafontaine16-a}.
\end{remark}

Consider now Theorem \ref{the:one}.
We need to show that the critical 
solution constructed in Section \ref{sec:4}
can not possibly exist, due to the compactness of the flow
which violates the dispersive properties of the equation.
The proof will be based on a virial identity which
requires the stronger set of assumptions \eqref{eq:assabcbis}.
Thus in the following we assume that the coefficients
$a,b,c$ of the operator \eqref{eq:definA} satisfy
both \eqref{eq:assabc} and \eqref{eq:assabcbis}.

Given a smooth, real valued
and bounded function $m(x)$,
consider the quantity
\begin{equation*}
  \theta(t)=\int_{\mathbb{R}} m(x)|u|^{2}dx=(mu,u)_{L^{2}}
\end{equation*}
where $u$ is a solution of the equation
\begin{equation}\label{eq:eqH}
  iu_{t}=Lu+c(x)u+F(u),\qquad
  Lu=-(a u_{x})_{x},
  \qquad
  F(u)=|u|^{\beta-1}u.
\end{equation}
Differentiating $\theta(t)$ we get formally
\begin{equation}\label{eq:virial0}
  \textstyle
  \theta'(t)=-\Re(i[L,m]u,u)_{L^{2}}
  =2\Re i\int am' u'\overline{u}dx
\end{equation}
where the term $cu$ gives no contribution since $[c,m]=0$.
Computing $\theta''$ we get
\begin{equation*}
  \textstyle
  \theta''(t)=
  ([L,[L,m]]u,u)_{L^{2}}+ 2(a m'c'u,u)_{L^{2}}
  +2\Re([L,m]u,f(u))
\end{equation*}
and hence the virial identity
\begin{equation}\label{eq:thevirial}
  \textstyle
  \theta''(t)=
  \int (L^{2}m+2am'c')|u|^{2}+
  4\int a(Lm+\frac 12a'm')|u_{x}|^{2}+
  2 \frac{\beta-1}{\beta+1}
  \int (Lm)|u|^{\beta+1}.
\end{equation}
To simplify this expression we write
\begin{equation*}
  \gamma(x)=a(x) m'(x)
\end{equation*}
so that $\gamma'=-Lm$, and changing sign we obtain
\begin{equation}\label{eq:virial1}
  \textstyle
  -\theta''(t)=
  \int[-(a \gamma'')'-2 \gamma c']\cdot|u|^{2}+
  2\int(2a \gamma'-a'\gamma)|u_{x}|^{2}+
  \frac{2}{\beta+1}\int(\beta-1)\gamma'|u|^{\beta+1}
\end{equation}
while \eqref{eq:virial0} simplifies to
\begin{equation}\label{eq:virial3}
  \textstyle
  \theta'(t)=-\Re(i[L,m]u,u)_{L^{2}}
  =2\Re i\int \gamma u'\overline{u}dx.
\end{equation}

We now choose $\gamma$ smooth and compactly
supported, so that
the rigorous justification of \eqref{eq:virial1},
\eqref{eq:virial3} reduces to a routine integration by parts.
Precisely, we choose
\begin{equation}\label{eq:choiceg}
  \textstyle
  \gamma(x)=(x-\frac{x^{3}}{6R^{2}})\cdot \chi(\frac xR)
\end{equation}
where $R>1$ is a parameter to be fixed, and 
$\chi\in C_{c}^{\infty}(\mathbb{R})$ is such that
\begin{equation*}
  \textstyle
  \one{B(0,1)}\le \chi\le \one{B(0,2)}.
\end{equation*}
Under the stronger \eqref{eq:assabcbis},
we now estimate the coefficients in \eqref{eq:virial1}
\begin{equation*}
  I=2(2a \gamma'-a'\gamma),
  \qquad
  II=-(a \gamma'')'-2 \gamma c',
  \qquad
  III=(\beta-1)\gamma'.
\end{equation*}

In the region $|x|\le R$ we have, using
the first condition in \eqref{eq:assabcbis},
\begin{equation*}
  \textstyle
  I=
  a(4-\frac{2}{R^{2}}x^{2})-
  2 a'x(1-\frac{x^{2}}{6R^{2}})
  \ge
  a(4-\frac{2}{R^{2}}x^{2})-
  2(\frac{6}{5}-\delta)a(1-\frac{x^{2}}{6R^{2}})
  \ge \frac 5 \delta a
\end{equation*}
while by both conditions in \eqref{eq:assabcbis} we get
\begin{equation*}
  \textstyle
  II=
  \frac{1}{R^{2}}(a'x+a)-2(1-\frac{x^{2}}{6R^{2}})xc'
  \ge 
  \frac{1}{R^{2}}\delta a
\end{equation*}
and finally
\begin{equation*}
  \textstyle
  III=(\beta-1)(1-\frac{x^{2}}{2R^{2}})\ge \frac{\beta-1}{2}.
\end{equation*}
Summing up we have proved
\begin{equation}\label{eq:finalI}
  I\ge C,\qquad
  II\ge \frac{C}{R^{2}},\qquad
  III\ge C
\end{equation}
for some $C>0$ independent of $R$.

Consider next the region $|x|\ge R$; since $\gamma$ is 
supported in $|x|\le 2R$ we may assume
$R\le|x|\le 2R$. Then we have easily
\begin{equation*}
  \textstyle
  |I|\lesssim|a' \gamma|+|a \gamma'|\lesssim
  |a'x|\frac{|\gamma|}{|x|}+|\gamma'|\le C',
\end{equation*}
\begin{equation*}
  \textstyle
  |II|\lesssim|a'x|\frac{|\gamma''|}{|x|}+|\gamma'''|+
  |xc'|
  \le \frac{C'}{R^{2}}
\end{equation*}
\begin{equation*}
  \textstyle
  |III|=(\beta-1)|\gamma'|\le C'
\end{equation*}
and summing up
\begin{equation}\label{eq:final2}
  |I|\le C',\qquad
  |II|\le \frac{C'}{R^{2}},\qquad
  |III|\le C'
\end{equation}
for a suitable constant $C'$ also independent of $R$.

We now go back to the virial identity \eqref{eq:virial1}.
Splitting the integral in the regions $|x|\le R$ and
$|x|>R$ and using \eqref{eq:finalI}, \eqref{eq:final2};
we obtain
\begin{equation*}
  \textstyle
  \int_{|x|\le R}
  \bigl(
  \frac{|u|^{2}}{R^{2}}
  +|u_{x}|^{2}+\frac{1}{\beta+1}|u|^{\beta+1}
  \bigr)dx
  \lesssim
  -\theta''(t)+
  \int_{|x|\ge R}
  \bigl(
  \frac{|u|^{2}}{R^{2}}
  +|u_{x}|^{2}+\frac{2}{\beta+1}|u|^{\beta+1}
  \bigr)dx.
\end{equation*}
By \eqref{eq:comcr1} and \eqref{eq:comcr2},
for any $\sigma\in(0,1)$ we can find $R>0$ such that
\begin{equation}\label{eq:virint}
  \textstyle
  \int_{|x|\ge R}
  \bigl(
  \frac{|u|^{2}}{R^{2}}
  +|u_{x}|^{2}+\frac{2}{\beta+1}|u|^{\beta+1}
  \bigr)dx
  \le 
  \sigma
  \int_{|x|\le R}
  \bigl(
  \frac{|u|^{2}}{R^{2}}
  +|u_{x}|^{2}+\frac{2}{\beta+1}|u|^{\beta+1}
  \bigr)dx.
\end{equation}
Choosing $\sigma$ small enough, we can then absorb
the integral at the right of \eqref{eq:virint} 
in the integral at the left; this gives
\begin{equation*}
  \textstyle
  \int_{|x|\le R}
  \bigl(
  \frac{|u|^{2}}{R^{2}}
  +|u_{x}|^{2}+\frac{2}{\beta+1}|u|^{\beta+1}
  \bigr)dx
  \lesssim
  -\theta''(t)
\end{equation*}
and using again \eqref{eq:virint} we arrive at
\begin{equation}\label{eq:viralm}
  \textstyle
  Z(t):=
  \int_{\mathbb{R}}
  \bigl(
  \frac{|u|^{2}}{R^{2}}
  +|u_{x}|^{2}+\frac{2}{\beta+1}|u|^{\beta+1}
  \bigr)dx
  \lesssim
  -\theta''(t).
\end{equation}
Note that $Z(t)$ is actually constant in time by conservation of
mass and energy.

We next integrate \eqref{eq:viralm} in time from 0 to 
$T>0$; by \eqref{eq:virial3}, the right hand side gives
\begin{equation*}
  \textstyle
  -\int_{0}^{T}\theta''(t)dt=
  -\theta'(t)\vert_{t=0}^{t=T}
  =-2\Re i\int \gamma u'\overline{u}dx\vert_{t=0}^{t=T}
  \lesssim
  Z(t)=Z(0)
\end{equation*}
by conservation of energy, and hence for all $T>0$
\begin{equation}\label{eq:finalvir}
  \textstyle
  \int_{0}^{T} Z(0)dt
  \lesssim
  Z(0)
\end{equation}
Since $Z(0)\simeq \mathcal{E}_{crit}>0$,
we see that \eqref{eq:finalvir} contradicts the assumption 
$\mathcal{E}_{crit}<\infty$.
We conclude that $\mathcal{E}_{crit}=\infty$, 
and this proves Theorem \ref{the:critA}.

\end{document}